\let\margin\marginpar
\newcommand\myMargin[1]{\margin{\raggedright\scriptsize #1}}
\renewcommand{\marginpar}[1]{\myMargin{#1}}
\newtheorem{lemma}{Lemma}[section]
\newtheorem{theorem}[lemma]{Theorem}
\newtheorem{corollary}[lemma]{Corollary}
\newtheorem{conjecture}[lemma]{Conjecture}
\newtheorem{prop}[lemma]{Proposition}
\theoremstyle{definition}
\newtheorem{definition}[lemma]{Definition}
\newtheorem{remark}[lemma]{Remark}
\theoremstyle{remark}
\newtheorem*{proof*}{Proof}
\numberwithin{equation}{section}
\newcommand{\wt}{\widetilde}
\def\RsHom{{\RR{\underline{Hom}}}}
\def\gr{{\mathbf{gr}}}
\def\D{\mathrm{D}}
\def\coh{\mathrm{coh}}
\def\Vect{\ul{Vect}}
\def\Sym{\mathrm{Sym}}
\def\Ext{{\mathrm{Ext}}}
\def\Hom{{\mathrm{Hom}}}
\def\sHom{{\mathscr{H}om}}
\def\End{{\mathrm{End}}}
\def\sEnd{{\mathscr{E}nd}}
\def\Perf{{\underline{Perf}}}
\def\Cok{{\text{coker}}}
\def\bi{{\bf{1}}}
\def\p{{\prime}}
\def\Map{{\bf{Map}}}
\def\dVt{\underline{Vect}}
\def\dCx{\RR{\underline{Cplx}}}
\def\qgr{{\mathrm{qgr}}}
\newcommand{\id}{\operatorname{id}}
\def\PP{{\mathbb P}}
\def\ZZ{{\mathbb Z}}
\def\CC{{\mathbb C}}
\def\LL{{\mathbb L}}
\def\TT{{\mathbb T}}
\def\RR{{\mathbb R}}
\def\HH{{\mathbb H}}
\def\GG{{\mathbb G}}
\def\bK{{\bf{K}}}
\def\cF{{\cal{F}}}
\def\cE{{\cal{E}}}
\def\cO{{\cal{O}}}
\def\cC{{\cal{C}}}
\def\cL{{\cal{L}}}
\def\cI{{\cal{I}}}
\def\cM{{\cal{M}}}
\def\cN{{\cal{N}}}
\def\cA{{\cal{A}}}
\def\cB{{\cal{B}}}
\def\cH{{\cal{H}}}
\def\cT{{\cal{T}}}
\def\cU{{\cal{U}}}
\def\cV{{\cal{V}}}
\def\cW{{\cal{W}}}
\def\cR{{\cal{R}}}
\def\cQ{{\cal{Q}}}
\def\fp{{\mathfrak{p}}}
\def\ft{{\mathfrak{t}}}
\def\fg{{\mathfrak{g}}}
\def\fa{{\mathfrak{a}}}
\def\fm{{\mathfrak{m}}}
\def\lg{{\langle}}
\def\rg{{\rangle}}
\def\sC{{\mathscr{C}}}
\def\tr{{\mathrm{tr}}}
\def\rk{{\mathrm{rk}}}
\def\ad{{\mathrm{ad}}}
\def\Res{{\mathrm{Res}}}
\def\deg{{\mathrm{deg}}}
\def\ii{{\mathrm{i}}}
\def\im{{\mathrm{im}}}
\def\ker{{\mathrm{ker}}}
\def\mod{{~\mathrm{mod}~}}
\def\ep{{\epsilon}}
\def \hs {\hspace{.2in}}
\def \ot {\otimes}
\def \bt {\boxtimes}
\def \wh{\widehat}
\def\ul{\underline}
\def\gr{\mathrm{gr}}
\def \Z {\mathbb Z}
\def \mc {\ep{\mathscr{C}}^{\gr}}
\def \dg {\mathbf{dg}}
\title{Shifted Poisson structures and moduli spaces of complexes}
\date{}
\author[1]{Zheng Hua\thanks{huazheng@maths.hku.hk}}
\author[2]{Alexander Polishchuk \thanks{apolish@uoregon.edu}}
\affil[1]{Department of Mathematics, the University of Hong Kong, Hong Kong SAR, China}
\affil[2]{University of Oregon and National Research University Higher School of Economics}
\begin{document}
\maketitle

\begin{center}
\begin{abstract}
\vspace{1cm}
In this paper we study the moduli stack of complexes of vector bundles (with chain isomorphisms)
over a smooth projective variety $X$ via derived algebraic geometry.
We prove that if $X$ is a Calabi-Yau variety of dimension $d$ then this moduli stack has a $(1-d)$-shifted Poisson
structure.
In the case $d=1$, we construct a natural foliation of the moduli stack by $0$-shifted symplectic substacks. 
We show that our construction recovers various known Poisson structures associated to complex elliptic curves,  including the Poisson structure on Hilbert scheme of points on elliptic quantum projective planes studied by Nevins and Stafford, and the Poisson structures on the moduli spaces of stable triples over an elliptic curves considered by one of us.
We also relate the latter Poisson structures to the semi-classical limits of the 
elliptic Sklyanin algebras studied by Feigin and Odesskii. 
\end{abstract} 
\end{center}

\newpage
\section{Introduction}

The framework of derived algebraic geometry, including the theory of derived Artin stacks (see \cite{HAGII}, \cite{To09}), 
has become an important tool in understanding moduli spaces of vector bundles and their generalizations.
In this paper we study the moduli spaces of bounded complexes of vector bundles in this context.

Our main inspiration comes from the work of Pantev-To\"en-Vaqui\'e-Vezzosi \cite{PTVV}, where the classical
symplectic structure on the moduli spaces of sheaves over K3-surfaces discovered by Mukai is ``explained" and
generalized using {\it shifted symplectic structures}. Namely, they show that the derived moduli stack $\RR\Perf(X)$ of perfect complexes over a $d$-dimensional Calabi-Yau variety $X$ has a canonical $(2-d)$-shifted symplectic structure.

In this paper we consider the derived moduli stack $\dCx(X)$ of bounded complexes of vector bundles
over a smooth projective variety $X$. Note that here we consider bounded complexes of vector bundles with isomorphisms given
by chain isomorphisms (not quasi-isomorphisms). Thus, it is very different from the moduli stack of perfect complexes where isomorphisms in derived category are allowed. This also makes our moduli stack to be closely related to moduli spaces of Higgs bundles,
of stable pairs and triples (see \cite{BG96}) and of coherent systems (see \cite{LP95}). 
More precisely, $\dCx(X)$ is defined as the derived mapping stack from $X$ to the derived stack $\dCx$. The latter classifies 
perfect graded mixed complexes with fixed Tor-amplitude $[0,0]$, where we
use the formalism of \cite[Sec.\ 1]{CPTVV}. The construction can be found in Section \ref{sec_mod_cplx}.

 Our main result is
that if $X$ is a Calabi-Yau variety 
of dimension $d$ then $\dCx(X)$ admits a $(1-d)$-shifted Poisson structure (see Theorem \ref{Mainthm1}). 
Here we use the theory of shifted Poisson structures developed in \cite{CPTVV} (see also \cite{Mel16},
\cite{MelSaf16}, \cite{Prid17}, \cite{Sp16}).


Our original motivation was to ``explain" natural Poisson structures on moduli spaces of stable triples and stable pairs over
an elliptic curve $C$ (see \cite{Pol98}), as well as those on moduli spaces of complexes over an elliptic curve constructed by
Nevins-Stafford \cite{NS06} and by Li \cite{Li14}. We indeed show that the relevant moduli stacks admit $0$-shifted Poisson structures
which have the classical Poisson structures as their ``shadows" (see Theorems \ref{Thmapp1} and \ref{stable-triples-thm}).
Note that Safronov in \cite{Saf17} gives a similar construction of a $0$-shifted Poisson structure on the moduli stack of 
$P$-bundles over an elliptic curve, where $P$ is a parabolic subgroup in a simple algebraic group (the corresponding classical 
Poisson structure was discovered by Feigin-Odesskii in \cite{FO95}).

As an additional bonus, the derived geometry point of view clarifies the picture with the symplectic leaves of
these Poisson structures. Namely, we show that they can be recovered from the natural map
$$\dCx(C)\to \RR\Perf(C)\times \RR\dVt^\gr(C)$$
by considering the homotopy fibers over stacky points in the target (see Corollary \ref{leaves}). More precisely, we prove
that these homotopy fibers carry $0$-shifted symplectic structures. Under additional assumptions this gives a classical symplectic
structure on the coarse moduli spaces of such fibers. 
In Sections \ref{ell-def-sympl-leaves-sec} and \ref{triples-sec} we show how this approach can be used to
study symplectic leaves of the classical Poisson structures associated with an elliptic curve.

In a somewhat different direction, in Section \ref{FO-sec} we check that the Poisson structure on the projectivization of
the space $\Ext^1(L,\mathcal{O}_C)$, for a degree $n$ line bundle $L$ on $C$, arising from the identification with a
moduli space of stable pairs,
coincides with the semi-classical limit of the Feigin-Odesskii algebras $Q_{n,1}(C,\eta)$. 



One should note that the foundations of the theory of shifted Poisson structures are not quite complete at the moment.
Namely, the definition of an $n$-shifted Poisson structure used by Spaide \cite{Sp16}, in terms of a Lagrangian structure
on a morphism to an $(n+1)$-shifted symplectic formal derived stack, is different from the definition in \cite{CPTVV}.
The proof of the equivalence between the two definitions should appear in a forthcoming paper by K.~Costello and 
N.~Rozenblyum. However, one implication of this equivalence has been already proved by Melani and Safronov \cite{MelSaf16}:
they prove that an $n$-shifted Poisson structure in the sense of Spaide gives rise to an $n$-shifted Poisson structure
as defined in \cite{CPTVV}. 

Our method of constructing $(1-d)$-shifted Poisson structure on the moduli stack of complexes is via constructing
a Lagrangian structure on a morphism to a moduli stack equipped with an $(2-d)$-shifted symplectic structure.
Thus, by the results of \cite{Sp16} and \cite{MelSaf16}, this will give a shifted Poisson structure with respect to either definition.

\paragraph{Acknowledgments.} We are grateful to Tony Pantev, Ted Spaide, Jiang-Hua Lu, Kai Behrend and Pavel Safronov for many valuable comments. Particular thanks go to Jon Pridham for kindly explaining the construction in \cite{Prid12} to us.
The research of Z.H. is supported by RGC Early Career grant no. 27300214, GRF grant no. 17330316  and NSFC Science Fund for Young Scholars no. 11401501. A.P. is supported in part by the NSF grant DMS-1400390 and by the Russian Academic Excellence Project `5-100'.

\section{Moduli spaces of complexes and perfect complexes}
The main purpose of this section is to briefly recall the construction of the moduli stack of objects in a dg category, due to T\"oen and Vaqui\'e \cite{TV07}, and to set up some notation involving dg-categories and derived stacks that will be used later.

\subsection{Moduli of objects in dg-categories}\label{sec_Rperf}
Let $k$ be a field of characteristic zero. We denote by $\sC(k)$ the category of (unbounded) cochain complexes of $k$-modules. It carries a standard model structure (\cite[Definition 2.3.3]{Ho99}). For $L,M\in\sC(k)$ and $n\in\ZZ$, let $\Hom^n(L,M)$ denote the $k$-module of morphisms $f:L\to M$ of graded objects of degree $n$. Then
\[
\sC(k)(L,M):=\Hom^\bullet(L,M)=\bigoplus_{n\in\ZZ}\Hom^n(L,M)
\] with the differential induced by the differentials on $L$ and $M$. This makes $\sC(k)$ into a dg-category. The model structure and the dg-category structure on $\sC(k)$ both lead to the (same) homotopy category of $\sC(k)$, which carries a symmetric monoidal structure.

      Let $T$ be a small dg-category over $k$. A \emph{(left) dg-$T$-module} is a dg functor $T\to \sC(k)$. We denote the category of \emph{(left) dg-$T$-module} by $T-Mod$. By \cite[4.2.18]{Ho99}, the category $T-Mod$ can be endowed with a $\sC(k)$-model structure. Moreover, its model structure is compactly generated. For the details, we refer to \cite[Section 2.1]{TV07}. 

To any dg-algebra $B$ over $k$, we associate a dg-category, also denoted by $B$ with the unique 
object $\star$ such that the dg-algebra of endomorphisms of this object $B(\star,\star)$ is equal to $B$. 
We denote by $\bi$ the dg-category with the unique object $\star$ and $\bi(\star,\star)=k$. The dg-category $\bi-Mod$ is just $\sC(k)$. More generally, for a $k$-algebra $B$, viewed as a dg-algebra concentrated in degree $0$, the dg-category $B-Mod$ is  the dg-category of complexes of $B$-modules.

The category of (small) dg-categories over $k$, denoted by $dgcat_k$, has a model structure \cite{Tab05}. Its homotopy category $Ho(dgcat_k)$ has a natural symmetric monoidal structure, with the corresponding internal Hom denoted by $\RsHom$ and the mapping spaces denoted by $Map(-,-)$ (see \cite{Ho99} for the precise definition of the mapping spaces). We consider these
mapping spaces as objects in the homotopy category of simplicial sets $Ho(sSet)$. Let us denote the full subcategory of $T^{op}-Mod$ consisting of cofibrant objects by $\wh{T}$. The objects in $\wh{\bi}$ are precisely the projective dg-$k$-modules. In terms of the internal Hom, we have 
\begin{equation}\label{hat1}
\wh{T}=\RsHom(T^{op},\wh{\bi}).
\end{equation}
We denote by $\wh{T}_{pe}$ the full subcategory of perfect objects in $\wh{T}$ (see Definition 2.3 of \cite{TV07}). 
The Yoneda functor 
\[
\underline{h}: T\to \wh{T}
\] factors through the subcategory $\wh{T}_{pe}$ (see section 2.2 \cite{TV07}).

For a dg-category $T$, following \cite{TV07}, we consider the functor
\begin{align*}
\cM_T: &sk-CAlg \to sSet\\
&A \mapsto Map_{dgcat_k}(T^{op},\wh{A}_{pe})
\end{align*}
where $sk-CAlg$ is the category of simplicial commutative $k$-algebras, $sSet$ is the category of simplicial sets and $Map_{dgcat_k}(T^{op},\wh{A}_{pe})$ is the mapping space of the model category of dg-categories. And $\wh{A}_{pe}$ stands for $\wh{N(A)-Mod}_{pe}$ with $N$ being the normalization. We denote $k-CAlg$ for the category of commutative $k$-algebras.
By \cite[Lemma 3.1]{TV07}, 
$\cM_T$ has a structure of $D^-$-stack in the sense of \cite[Section 2.2]{HAGII}.

The main theorem of \cite{TV07} is the following.
\begin{theorem}(Theorem 3.6, Corollary 3.17 \cite{TV07})\label{MthmTV}
Let $T$ be a dg-category of finite type (e.g. $T$ is saturated). Then the $D^-$-stack $\cM_T$ is locally geometric and locally of finite presentation. Moreover, for any pseudo-perfect $T^{op}$-dg-module $E$, corresponding to a global point of $\cM_T$, the tangent complex of $\cM_T$ at $E$ is given by
\[
\TT_{\cM_T,E}\simeq \RsHom(E,E)[1].
\]
In particular, if $E$ is quasi-representable by an object $x$ in $T$, then we have
\[
\TT_{\cM_T,E}\simeq T(x,x)[1].
\]
\end{theorem}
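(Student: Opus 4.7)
The plan is to apply the Töen--Vezzosi representability criterion from HAG-II (Theorem 1.4.2.6/Proposition 2.2.6.11), which identifies locally geometric $D^-$-stacks by checking étale hyperdescent, the existence of a perfect obstruction theory at every point, and representability of the underlying classical truncation by an Artin stack. Thus the argument will split into a descent step, a deformation-theoretic tangent complex computation, and a geometricity verification.

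First, I would verify that $\cM_T$ is a $D^-$-stack, i.e.\ satisfies étale hyperdescent in the base simplicial commutative $k$-algebra $A$. This is essentially formal: the assignment $A\mapsto \wh{A}_{pe}$ is compatible with homotopy limits along étale covers (derived faithfully flat descent for perfect modules), and $Map_{dgcat_k}(T^{op},-)$ preserves homotopy limits in its target.

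Next, I would compute the tangent complex via deformation theory. At an $A$-point $E:T^{op}\to \wh{A}_{pe}$ and an $A$-module $M$ placed in degree $-n$, consider the trivial square-zero extension $A[M]=A\oplus M$ and the homotopy fiber of $\cM_T(A[M])\to \cM_T(A)$ at $E$. Standard deformation theory of dg-modules (following Lowen--Van den Bergh) identifies this fiber with the mapping space from $k$ to $\RsHom_{T^{op}\ot A}(E,E\otimes^{\bL}_A M)[1]$. Letting $M$ and $n$ vary extracts the cotangent complex and yields
\[
\TT_{\cM_T,E}\simeq \RsHom(E,E)[1].
\]
When $E=\underline{h}_x$ is quasi-representable by $x\in T$, the derived Yoneda lemma gives $\RsHom(\underline{h}_x,\underline{h}_x)\simeq T(x,x)$, supplying the second formula.

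For local geometricity, the efficient approach is to use the fact that if $T$ is of finite type (equivalently, compact in $Ho(dgcat_k)$), then pseudo-perfect $T^{op}$-modules assemble into a mapping stack, so that $\cM_T\simeq \RR\Map(T^{op},\cM_\bi)$ up to taking perfect substacks appropriately, where $\cM_\bi=\RR\Perf$ is the moduli of perfect complexes. This reduces the statement to two subclaims: (a) the stack $\RR\Perf$ is itself locally geometric and locally of finite presentation, which is proved by exhibiting it as a filtered union of open substacks parametrizing complexes of prescribed Tor-amplitude and rank and presenting each by a smooth atlas of complexes of free modules; (b) mapping stacks out of a compact dg-category into a locally geometric derived stack remain locally geometric, a form of Lurie's mapping-stack representability. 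Perfectness of the tangent complex $\RsHom(E,E)[1]$ at every pseudo-perfect $E$ follows from compactness of $T$: finite-typeness implies that the diagonal bimodule of $T$ is perfect, so $\RsHom(E,E)$ is perfect over $A$ whenever $E$ is pseudo-perfect.

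The hard part, I expect, is part (a): establishing local geometricity of $\RR\Perf$ itself. The stratification by amplitude and rank, together with the explicit construction of smooth atlases and careful control of obstruction classes across strata, is the technical core. Once this is in place, compatibility of the mapping-stack construction with the tangent-complex calculation above delivers the full statement; the finite-type hypothesis on $T$ intervenes decisively both in reducing to $\RR\Perf$ and in ensuring uniform perfectness of $\RsHom(E,E)$ in families.
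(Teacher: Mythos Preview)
The paper does not give its own proof of this theorem: it is quoted verbatim from To\"en--Vaqui\'e \cite{TV07} (Theorem 3.6 and Corollary 3.17 there), and the surrounding text only adds remarks pointing to definitions and to Pridham's alternative construction. So there is no in-paper proof to compare your proposal against.

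That said, your outline is broadly in the spirit of the actual argument in \cite{TV07}. The reduction you sketch --- identify $\cM_T$ with a mapping object into $\cM_{\bi}=\RR\Perf$ when $T$ is of finite type, prove local geometricity of $\RR\Perf$ by stratifying by Tor-amplitude, and then read off the tangent complex from square-zero deformation theory --- is precisely the architecture of To\"en--Vaqui\'e's proof. A couple of points where your sketch is looser than what is actually needed: the identification of $\cM_T$ as a mapping stack into $\RR\Perf$ is not literally $\RR\Map(T^{op},\cM_{\bi})$ but rather goes through the sub-stack of pseudo-perfect modules, and one has to check carefully that finite type (not merely saturation or smoothness) is what makes pseudo-perfect coincide with the right finiteness condition in families; and the ``Lurie mapping-stack representability'' you invoke in (b) is not used in \cite{TV07}, which instead proves directly that $\cM_{-}$ sends finite homotopy colimits of dg-categories to finite homotopy limits of stacks and then uses that finite-type dg-categories are retracts of finite cell objects. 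But these are refinements of a correct overall strategy, not gaps.
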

\begin{remark}
The definition of a dg-category of {\it finite type} can be found in \cite[Definition 2.4]{TV07}.
The definitions of \emph{pseudo-perfect object}, \emph{locally geometric}, \emph{locally of finite presentation} and \emph{quasi-representable} can be found in 
\cite[Definition 2.7, Section 2.3, Definition 2.4]{TV07}. The locally geometric condition guarantees the existence of the cotangent complexes (by \cite[Corollary 2.2.3.3]{HAGII}).
\end{remark}

When $T=\bi$, $\cM_\bi$ classifies the perfect complexes of $k$-modules up to quasi-isomorphisms. We denote $\cM_\bi$ by $\RR\Perf$. Let $a,b\in\ZZ$ be two integers with $a\leq b$. We define $\cM_{\bi}^{[a,b]}\subset \cM_\bi$ to be the full sub-$D^-$-stack consisting of perfect complexes of $k$-modules of Tor amplitude contained in $[a,b]$ (\cite[Proposition 2.2]{TV07}). We find that
\[
\cM_\bi=\bigcup_{a\leq b}\cM^{[a,b]}_\bi,
\]
and $\cM^{[a,b]}_\bi$ is $n$-geometric and locally of finite presentation, for $n=b-a+1$. When $a=b$, $\cM^{[a,a]}_\bi$ is equivalent to the $D^-$-stack of vector bundles 
\begin{equation}\label{vect=BG}
\Vect\simeq \bigsqcup_{n} BGL_n.
\end{equation}

Let $V=\{\ldots\to V^i\to V^{i+1}\to\ldots\}$ be a perfect complex of $k$-modules. 
 We denote  $\dVt^\gr$ for the classifying stack of the underlying graded objects $\bigoplus_{i}V^i[-i]$.
Note that $\dVt^\gr=\bigcup_{n>0}\dVt^\gr_n$, where $\dVt^\gr_n$ is the $n$-fold product of $\dVt$. There is an obvious morphism $s:\dVt^{gr}\to \RR\Perf$, mapping $\bigoplus_{i}V^i[-i]$ to the corresponding perfect complex with zero differentials.

Let us denote by $D^-St(k)$ the homotopy category of $D^-$-stacks, and by $St(k)$ the category of (higher) stacks.
Considering a commutative $k$-algebra as a constant simplicial commutative $k$-algebra 
gives an embedding $k-CAlg\to sk-CAlg$. This embedding induces a truncation functor
\[
t_0: D^-St(k)\to St(k),
\] with the left adjoint 
\[
i: St(k)\to D^-St(k)
\] which is fully faithful.

One has natural isomorphisms
\[
\pi_1(\cM_T,E)\cong Aut(E), ~~~ \pi_i(\cM_T,E)\cong \Ext^{1-i}(E,E),
\] where $Aut$ and $\Ext$ are computed in the triangulated category $Ho(T^{op}-Mod)$.

The construction $T\mapsto \cM_T$ is contravariant in $T$, and gives rise to a contravariant functor from the model category of small dg-categories to the model category of $D^-$-stacks (see \cite[Section 3.1]{TV07}).  Moreover, it induces a functor between homotopy categories
\[
\cM_{-}: Ho(dgcat_k)^{op}\to D^-St(k).
\]
Let $F$ be a locally geometric $D^-$-stack. The natural adjunction map $i(t_0(F))\to F$, from the truncated stack to the derived stack, induces a map on tangent complexes and cotangent complexes
\[
\TT_{i(t_0(F))}\cong \tau^{\leq 0} \TT_F\to \TT_F,~~\LL_F\to \LL_{i(t_0(F))}\cong \tau^{\geq 0} \LL_F
\]
where $\tau^{\geq 0}$ and $\tau^{\leq 0}$ are the (smart) truncation functors of complexes. To simplify the notation, we will omit the functor $i$ and just write this map as $t_0(F)\to F$. For $A\in k-CAlg$ the set of $A$-points of the truncated stack $t_0(F)$ is given by 
$$t_0(F)(A)=F(A).$$

\begin{definition}\label{stacky-pt-def}
Fix a $T^{op}$-module $E$. We define $x_E$, the \emph{stacky} point representing $E$, as
 the subfunctor of $t_0(\cM_T)$ corresponding to the subcategory of $T$ consisting of objects isomorphic to $E$.
\end{definition}

There is a monomorphism of stacks
\[
x_E\to t_0(\cM_T)
\] whose tangent map is
\[
 \tau^{\leq 0} (\RsHom(E,E))[1]\to  \tau^{\leq 1} (\RsHom(E,E))[1].
\]
The induced morphism on cohomology is an isomorphism for negative degree and is zero for degree zero.

\subsection{Moduli space of complexes}\label{sec_mod_cplx}

The goal of this subsection is to construct a geometric $D^-$ stack $\dCx$ classifying bounded complexes of vector bundles up to chain isomorphisms,  and a morphism from $\dCx$ to $\RR\Perf$. The main idea is to view objects of $\dCx$ as graded mixed complexes with fixed Tor-amplitude.

First we briefly recall the basics of the category of graded mixed complexes following Section 1.1 of \cite{CPTVV}.
Let $\mc(k)$
denote the category of graded mixed complexes of $k$-dg-modules. Its objects consist of families of dg-$k$-modules $E:=\{E(p)\}_{p\in\ZZ}$, together with families of morphisms 
\[
\ep: E(p)\to E(p+1)[1],
\] such that $\ep^2=0$. In order to avoid confusion, we call $p$ the \emph{weight degree} to distinguish it from the cohomological grading on $E(p)$.

For $E,F\in \mc(k)$, the Hom space $\underline{Hom}^\gr_{\mc(k)}(E,F)$ in $\mc(k)$ has its weight $p$ piece 
\[
\underline{Hom}^\gr_{\mc(k)}(E,F)(p):=\prod_{q\in \ZZ}\underline{Hom}_{\sC(k)}(E(q),F(q+p))
\] for any $p\in\ZZ$. The mixed differential $\ep_p: \underline{Hom}^\gr_{\mc(k)}(p)\to \underline{Hom}^\gr_{\mc(k)}(p+1)[1]$ has its $q$-component defined by
\[
\alpha+\beta: \prod_{q^\p\in\ZZ}\underline{Hom}_{\sC(k)}(E(q^\p), F(q^\p+p))\to \underline{Hom}_{\sC(k)}(E(q),F(p+q+1))[1]
\] where 
\[
\xymatrix{
\prod_{q^\p\in\ZZ}\underline{Hom}_{\sC(k)}(E(q^\p),F(q^\p+p))\ar[r]^{pr}\ar[rd]_\alpha & \underline{Hom}_{\sC(k)}(E(q),F(q+p))\ar[d]^{\ep_F\cdot}\\
& \underline{Hom}_{\sC(k)}(E(q),F(p+q+1))[1]
}
\] and
\[
\xymatrix{
\prod_{q^\p\in\ZZ}\underline{Hom}_{\sC(k)}(E(q^\p),F(q^\p+p))\ar[r]^{pr}\ar[rd]_\beta & \underline{Hom}_{\sC(k)}(E(q+1),F(q+1+p))\ar[d]^{\cdot \ep_E}\\
& \underline{Hom}_{\sC(k)}(E(q),F(p+q+1))[1]
}
\] commute.

The category $\sC^\gr(k):=\prod_{p\in\ZZ}\sC(k)$ is naturally a symmetric monoidal model category with model structure defined component-wise, and a monoidal structure defined by
\[
(E\otimes E^\p)(p):=\bigoplus_{i+j=p} E(i)\otimes E^\p(j).
\]
One can check that $\mc(k)$ is equipped with a symmetric monoidal category structure, defined through the forgetful functor 
\[
\mc(k)\to \sC^\gr(k)
\] that forgets the mixed structure.

We denote the $\infty$-categories corresponding to $\sC(k)$, $\sC(k)^\gr$ and $\mc(k)$ by $\dg_k$, $\dg^\gr_k$ and $\ep\dg^\gr_k$. We denote the $\infty$-functor corresponding to the forgetful functor by
\[
U_\ep: \ep\dg^\gr_k \to \dg^\gr_k.
\]
On the other hand, there is a second $\infty$-functor, called the \emph{realization functor}:
\[
|-|: \ep\dg^\gr_k\to \dg_k
\] defined on the strict model by 
\[
E\mapsto\prod_{p\geq 0} E(p),
\]
where the right hand side is endowed with the total differential $=$ sum of the cohomological differential and the mixed differential.
\begin{remark}
We provide an alternative view to objects in $\mc(k)$ which is sometime useful. Given an object $E:=\{ E(p)\}_{p\in\ZZ}$ in $\mc(k)$, we consider a double complex $E^{\bullet,\bullet}$ whose $p$-th column is $E(p)$ with internal (cohomological) differential, and horizontal differential given by the mixed structure. The forgetful functor forgets the horizontal differential on $E^{\bullet,\bullet}$ and the realization functor maps $E^{\bullet,\bullet}$ to the total complex.
\end{remark}
Let $T$ be a small dg-category over $k$. A (left) $\ep$-dg-$T$-module is a dg-functor $T\to \mc(k)$. Denote the category of $\ep$-dg-$T^{op}$-modules by  $\ep T^{op}-Mod$. It is endowed with a $\mc(k)$-model structure. Denote the full subcategory of $\ep T^{op}-Mod$ consisting of cofibrant objects by $\ep \wh{T}$, and the full subcategory of $\ep\wh{T}$ consisting of perfect objects by $\ep\wh{T}_{pe}$. Replacing the target model category $\mc(k)$ by $\sC^\gr(k)$, we can define similarly the category of graded dg-$T^{op}$-modules $\gr T^{op}-Mod$, the full subcategory of cofibrant objects $\gr\wh{T}$ and the full subcategory of perfect objects $\gr\wh{T}_{pe}$.  

Consider the moduli functors:
\[
\gr\cM_T: sk-CAlg\to sSet
\]
\[
A\mapsto Map_{dgcat_k}(T^{op},\gr\wh{A}_{pe})
\]
and
\[
\ep\cM_T: sk-CAlg\to sSet
\]
\[
A\mapsto Map_{dgcat_k}(T^{op},\ep\wh{A}_{pe})
\] where $\ep\wh{A}_{pe}$ (resp. $\gr\wh{A}_{pe}$) is the dg-category of perfect graded mixed complexes of dg-$N(A)$-modules (resp. perfect graded dg-$N(A)$-modules). It follows from the proof of Lemma 3.1 of \cite{TV07} that $\gr\cM_T$ and $\ep\cM_T$ are $D^-$-stacks.

The forgetful functor and the realization functor induces natural transformations of moduli functors, and therefore morphisms of $D^-$-stacks. By an abuse of notations, we denote the natural transformations by
\[
\xymatrix{
& \ep\cM_T\ar[rd]^{|-|}\ar[ld]_{U_\ep} &\\
\gr\cM_T & & \cM_T
}
\]

When $T=\bi$, $\ep\cM_\bi$ (resp. $\gr\cM_\bi$) classifies the perfect graded mixed complexes of dg-$k$-modules (resp. perfect graded dg-$k$-modules) up to quasi-isomorphisms of dg-$k$-modules. Let $a,b\in\ZZ$ be two integers with $a\leq b$. Define $\ep\cM_\bi^{[a,b]}\subset \ep\cM_\bi$ (resp. $\gr\cM_\bi^{[a,b]}\subset \gr\cM_\bi$) to be the full substack consisting of perfect mixed complexes of dg-$k$-modules (resp. perfect graded dg-$k$-modules) of Tor amplitude contained in $[a,b]$. 

We denote $\ep\cM_\bi^{[0,0]}$ by $\dCx$. Clearly, $\gr\cM_\bi^{[0,0]}$ is simply the stack of graded vector bundles $\Vect^{gr}$, which is  a $1$-geometric stack and locally of finite presentation. We refer the readers to Section 2.3 of \cite{TV07} for the definitions of $n$-geometric and locally finite presentation. A $D^-$-stack $F$ is called a \emph{derived Artin stack} if $F$ is $n$-geometric for some $n\in\ZZ$.

Now we study the stack $\dCx$ by describing its $A$-points.  For $A\in k-CAlg$, the $A$-points $\dCx(A)$ classify bounded graded mixed complexes of finitely generated projective $A$-modules, i.e.  bounded complexes (with respect to the weight decomposition) of finitely generated projective $A$-modules, or equivalently a graded $A[\ep]/(\ep^2)$-module that is a finitely generated projective $A$-module. 
In particular, the $k$-points of $\dCx$ correspond to bounded complexes of finite dimensional $k$-vector spaces:
\[
\xymatrix{
V:=\{\ldots\ar[r] &V(p-1)\ar[r]^\ep & V(p)\ar[r]^\ep & V(p+1)\ar[r] & \ldots\}
}
\]
Because the model structure on $\mc(k)$ is pull back from that on $\sC^\gr(k)$, two objects in $\dCx$ are equivalent if and only if they are chain isomorphic.
\begin{prop}
The $D^-$-stack $\dCx$ is $1$-geometric and locally of finite presentation. 
\end{prop}
\begin{proof}
By Theorem 2.2.6.12 of \cite{HAGII} or by Theorem 4.12 of \cite{Prid12} which are both special cases of the representability criteria of Lurie \cite{Lurie}, it suffices to  check that
\begin{enumerate}
\item[$(1)$] $t_0(\dCx)$ is a (underived) Artin stack locally of finite presentation;
\item[$(2)$] $\dCx$ admits an obstruction theory; 
\item[$(3)$] $\dCx$ is nilcomplete, i.e. it commutes with the homotopy limit given by the Postnikov tower of $A\in sk-CAlg$.
\end{enumerate} 
If $(1)$ holds, then by Proposition 1.32 and 1.33 of \cite{PridRep} to verify conditions $(2)$ and $(3)$ for $\dCx$ it suffices to check the following condition:
\begin{enumerate}
\item[$(4)$] For $A\in k-CAlg$ and $E\in \dCx(A)$, the cohomology groups of the tangent complex at $E$ are finitely generated $A$-modules.
\end{enumerate}
Because an $A$-point $E$ corresponds to a graded $A[\ep]/(\ep^2)$-module that is a finitely generated projective $A$-module, the $i$-th cohomology group of the tangent complex at $E$ is equal to $\Ext^{i+1}_{A[\ep]/(\ep^2)}(E,E)$ which is a finitely generated $A$-module.

Fix integers $p,q$ such that $p\leq q$.
Let $v_{(p,q)}=(v_p,v_{p+1},\ldots,v_q)$ be a vector of positive integers. Denote $\dCx_{v_{(p,q)}}$ for the substack whose $A$-points are complexes (with respect to the weight degree) 
\[
0\to V(p)\to V(p+1)\to\ldots \to V(q)\to 0
\] of projective $A$-modules such that $\rk~V(i)=v_i$. To prove $(1)$, it suffices to show that $t_0(\dCx_{v_{(p,q)}})$ is an Artin stack of finite presentation. We will prove it by representing $t_0(\dCx_{v_{(p,q)}})$ as (underived) fiber product of Artin stacks. Denote $\cF_{v_{(p,q)}}$ for the (underived) stack of graded vector bundles with  rank vector $v_{(p,q)}$. Denote $\cV_{v_{(p,q)}}$ and $\cW_{v_{(p,q)}}$ for the vector bundle stacks over $\cF_{v_{(p,q)}}$ with fiber being the universal bundle $\bigoplus_{i=p}^{q-1} \Hom(V(i),V(i+1))$ and $\bigoplus_{i=p}^{q-2} \Hom(V(i),V(i+2))$ respectively. Let $\mu: \cV_{v_{(p,q)}}\to \cW_{v_{(p,q)}}$ be the morphism of stacks defined by composition and $\iota : \cF_{v_{(p,q)}}\to \cW_{v_{(p,q)}}$ be the morphism defined by zero section. Then $t_0(\dCx_{v_{(p,q)}})$ is the fiber product of the diagram of morphisms of Artin stacks  of finite presentation:
\[
\xymatrix{
t_0(\dCx_{v_{(p,q)}})\ar[r]\ar[d] &  \cV_{v_{(p,q)}}\ar[d]^\mu\\
 \cF_{v_{(p,q)}}\ar[r]^\iota  & \cW_{v_{(p,q)}}
}
\]

\end{proof}

The tangent complex of $\dCx(k)$ at $V$ can be computed by the reduced Hochschild cochain complex of $k[\ep]/(\ep^2)$ with coefficients in the bimodule $\End_\bullet(V):=\bigoplus_{p\in\ZZ}\bigoplus_{q\in \ZZ}\Hom_k(V(p),V(p+q))$:
\[
\bigoplus_{n\geq 0}\Hom_\gr(\fm^{\otimes n},\End(V))
\] where $\fm$ is the maximal ideal of $k[\ep]/(\ep^2)$. Here $\Hom_\gr$ means homogeneous map of degree zero. It is easy to check that the Hochschild cochain complex is precisely 
\[
\End_{\geq 0}(V):=\bigoplus_{p\in\ZZ}\bigoplus_{q\geq 0}\Hom_k(V(p),V(p+q))
\] with differential given by $\ep\cdot$.
Denote $\End_0(V)$ for $\bigoplus_{p\in\ZZ}\Hom_k(V(p),V(p))$.

By setting $T=\bi$ and restricting the morphisms $U_\ep$ and $|-|$ to the substack $\dCx$, we get a diagram of morphisms of $D^-$-stacks: 
\[
\xymatrix{
& \dCx\ar[rd]^{q}\ar[ld]_{p} &\\
\Vect^\gr & & \RR\Perf
}
\]
For $A \in sk - CAlg$ and a simplicial resolution $A^{sim}$ of $A$, the tangent maps of $q$ and $p$ at an $A$-point $V\in\dCx(A)$ can be explicitly presented as the chain maps 
\[
\Hom^\bullet (A^{sim} , \End_{\geq 0} (V)) \to \Hom^\bullet (A^{sim} , \End_\bullet  (V ))
\] and  
\[
\Hom^\bullet (A^{sim} , \End_{\geq 0} (V)) \to \Hom^\bullet (A^{sim} , \End_0  (V ))
\]
induced by the natural embedding $\End_{\geq 0}(V )=\sigma^{\geq 0}\End_\bullet(V) \to \End_\bullet(V )$ and the projection $\End_{\geq 0}(V ) \to\End_0(V)$.

\section{Shifted symplectic and Poisson structures}
This section is the main body of the paper. We start by reviewing the notions of shifted symplectic structure, Lagrangian structure and shifted Poisson structure following \cite{PTVV} and \cite{Sp16}. Then we will prove that $\dCx$ carries a $1$-shifted Poisson structure. This result leads to the first main result of the paper (Theorem \ref{Mainthm1}) that the moduli space of complexes of vector bundles (up to chain isomorphisms) on a $d$-dimensional Calabi Yau has a $(1-d)$-shifted Poisson structure. In the second part, we analyze the $d=1$ case in detail and prove the second main result (Theorem \ref{Lagpoint}) that says a stacky point is Lagrangian.

\subsection{Shifted Poisson structures from Lagrangian structures}
Let us recall some definitions and results in the theory of shifted symplectic and Poisson structures following \cite{PTVV} and \cite{Sp16}. Let $X$ be a derived Artin stack over $k$. We denote the tangent (resp. cotangent) complex  of $X$ by $\TT_X$ (resp. $\LL_X$). We can form the de Rham algebra
$DR(X):=\Sym^*_{\cO_X}(\LL_X[1])$. This is a weighted sheaf whose weight $p$ component is $DR(X)(p):=\Sym^p_{\cO_X}(\LL_X[1])=(\wedge^p \LL_X)[p]$.

The \emph{space of p-forms of degree n} on $X$ is a simplicial set
\[
\cA^p(X,n):= |\Hom_{L_{Qcoh(X)}}(\cO_X,\wedge^p\LL_X[n])|
\]
where $L_{Qcoh(X)}$ is the $\infty$-categorical version of the quasi-coherent category and $|\cdot |$ is the Dold-Kan denormalization.

The \emph{weighted negative cyclic chain complex}, denoted by $NC^w$, is defined to be $\bigoplus_p NC^w(X)(p)$  whose weight $p$ component is
\[
NC^w(X)(p):=(\prod_{i\geq 0} \left(\wedge^{p+i}\LL_X\right) [p-i],d_{\LL_X}+d_{DR}).
 \] 
 
The \emph{space of closed p-forms of degree n} on $X$ is 
\[
\cA^{p,cl}(X,n):=| \Hom_{L_{Qcoh(X)}}(\cO_X, NC^w(X)[n-p](p))|.
\]
The natural projection
\begin{equation}\label{formpr}
NC^w(X)[n-p](p)\to \wedge^p \LL_X[n]
\end{equation} 
defines a map $\cA^{p,cl}(X,n)\to \cA^p(X,n)$, send a closed $p$-form to the ``underlying $p$-form''. A $2$-form $\omega:\cO_X\to \wedge^2\LL_X[n]$ is \emph{non-degenerate} if the induced map $\TT_X\to \LL_X[n]$ is a quasi-isomorphism. An \emph{$n$-shifted symplectic form} on $X$ is a closed $2$-form whose underlying form is non-degenerate.

\begin{definition} (Definition 2.7 \cite{PTVV})\label{isotrop}
Let $Y$ be a derived Artin stack with an $n$-shifted symplectic form $\omega$ and let $f:X\to Y$ be a morphism. An \emph{isotropic structure} on $f$ is a path (homotopy) $h: 0\sim f^*\omega$ in the space $\cA^{2,cl}(X,n)$.
\end{definition}
The \emph{relative} tangent complex of $f$, denoted by $\TT_f$ is defined by the exact triangle 
\[
\xymatrix{
\TT_f\ar[r] &\TT_X\ar[r] & f^*\TT_Y\ar[r] & \TT_f[1]
}
\]

\begin{lemma}\label{Thetah}
An isotropic structure on $f$ defines a map $\Theta_h: \TT_f\to \LL_X[n-1]$.
\end{lemma}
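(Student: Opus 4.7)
The plan is to unpack the isotropic structure into a nullhomotopy and then use the defining fiber sequence $\TT_f \to \TT_X \to f^*\TT_Y$ to produce the shifted map.

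First I would project the isotropic homotopy $h \colon 0 \sim f^*\omega$ along the natural map (\ref{formpr}) from closed 2-forms to 2-forms, obtaining a nullhomotopy of the underlying 2-form $f^*\omega \in \cA^2(X,n)$. Using the standard equivalence between the space of $2$-forms $\cO_X \to \wedge^2 \LL_X[n]$ and the space of self-dual maps $\TT_X \to \LL_X[n]$ (via the interior contraction, together with the perfectness needed to view elements of $\wedge^2 \LL_X[n]$ as antisymmetric pairings on $\TT_X$), this translates into a nullhomotopy of the chain map $(f^*\omega)^\flat \colon \TT_X \to \LL_X[n]$. The second basic observation is the canonical factorization
\[
(f^*\omega)^\flat \colon \TT_X \xrightarrow{\,df\,} f^*\TT_Y \xrightarrow{\,f^*\omega^\flat_Y\,} f^*\LL_Y[n] \xrightarrow{\,(df)^*\,} \LL_X[n],
\]
so the composition $\TT_f \to \TT_X \xrightarrow{(f^*\omega)^\flat} \LL_X[n]$ factors through $\TT_f \to f^*\TT_Y$, which is itself canonically nullhomotopic by the fiber sequence defining $\TT_f$.

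This yields two a priori different nullhomotopies of the composite $\TT_f \to \LL_X[n]$: one by precomposing with the nullhomotopy coming from $h$, and the other by postcomposing the canonical nullhomotopy of $\TT_f \to f^*\TT_Y$ along $f^*\omega^\flat_Y$ and $(df)^*$. Their difference is a loop at $0$ in $\Map(\TT_f, \LL_X[n])$, i.e.\ an element of
\[
\pi_0\, \Omega\, \Map(\TT_f, \LL_X[n]) \;=\; \pi_0\, \Map(\TT_f, \LL_X[n-1]),
\]
and I would define $\Theta_h$ to be the homotopy class of this element. Equivalently, in a strict chain-level model where the fiber sequence is realized with $\TT_f \to f^*\TT_Y$ literally zero, one can write $h$ as a degree $-1$ map $\TT_X \to \LL_X[n]$ satisfying $[d,h] = (f^*\omega)^\flat$; restriction to $\TT_f$ kills the right-hand side, so $h|_{\TT_f}$ becomes a bona fide chain map $\TT_f \to \LL_X[n-1]$, recovering $\Theta_h$.

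The main conceptual point to verify is that the construction is well defined up to contractible choice, independent of the chain-level model, so that the homotopy class of $\Theta_h$ depends only on $h$. This is essentially bookkeeping once one adopts the $\infty$-categorical viewpoint on the above double nullhomotopy, but it is the only step where real care is required; the rest is formal manipulation of fiber sequences in a stable $\infty$-category.
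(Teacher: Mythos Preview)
Your proposal is correct and is essentially the same construction as the paper's: the paper simply observes that the isotropic nullhomotopy of $\TT_X \to f^*\TT_Y \xrightarrow{f^*\omega} f^*\LL_Y[n] \to \LL_X[n]$ lets one factor the map $f^*\TT_Y \to \LL_X[n]$ through the cofiber $\TT_f[1]$, and the shift of that factorization is $\Theta_h$. Your ``difference of two nullhomotopies'' description is just the standard unpacking of this cofiber factorization in a stable setting, so the arguments coincide, with yours being more explicit about the $\infty$-categorical bookkeeping.
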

\begin{proof}
The $n$-shifted symplectic structure $\omega$ defines a morphism
\[
\omega: \TT_Y\to \LL_Y[n].
\]
By the isotropic condition, 
\[
\xymatrix{
 \TT_X\ar[r] &f^*\TT_Y\ar[r]^{f^*\omega} &f^*\LL_Y[n]\ar[r] & \LL_X[n]
 }
\] factors through the map 
$\TT_f[1]\to f^*\LL_Y[n]\to \LL_X[n]$, 
whose shift defines $\Theta_h$.
\end{proof}

\begin{definition}(Definition 2.8 \cite{PTVV})\label{Lag}
We say an isotropic structure $h$ is \emph{Lagrangian} if $\Theta_h$ is a quasi-isomorphism. And we denote the simplicial set of all Lagrangian structures on $f$ by $Lagr(f,\omega)$.
\end{definition}

The following definition of $n$-shifted Poisson structures is different from the one in Section 3.1 of \cite{CPTVV}. The equivalence of the two definitions was announced by Costello and Rozenblyum. In \cite{MelSaf16} Melani and Safronov prove that
the definition below gives rise to an $n$-shifted Poisson structure in the sense of \cite{CPTVV}.

\begin{definition}\label{shiftedPoi} (Definition 2.2 \cite{Sp16})
An $n$-shifted Poisson structure on $X$ is a tuple $(Y,\omega,f,h)$, where $Y$ is a formal derived stack with an $(n+1)$-shifted symplectic structure $\omega$, $f:X\to Y$ is a morphism such that $X_{red}\to Y_{red}$ is an isomorphism, and $h$ is a Lagrangian structure on $f$.
\end{definition}

The above definition involves the theory of formal derived stacks.  We refer to Section 2.1 of \cite{CPTVV} for the details of the theory of formal derived stacks, including the definition of the reduced stack $X_{red}$ and the definition of the formal completion $\wh{Y}_X$. 

The following lemma will be our main tool of constructing shifted Poisson structures.

\begin{lemma}(Lemma 2.3 \cite{Sp16}) \label{LagPoi}
Let $X$ be a derived stack and $Y$ a derived stack with an $(n+1)$-shifted symplectic structure. Let $f:X\to Y$ be a map with a Lagrangian structure. Then $X$ has an $n$-shifted Poisson structure given by $\wh{f}: X\to\widehat{Y}_X$. We say $X$ has an $n$-shifted Poisson structure over $Y$.
\end{lemma}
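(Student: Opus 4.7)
My plan is to unpack Definition \ref{shiftedPoi} and verify its four ingredients (formal derived stack, $(n+1)$-shifted symplectic form, map inducing an isomorphism on reduceds, Lagrangian structure) one at a time, with $\widehat{Y}_X$ playing the role of the target formal derived stack.

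First, I would invoke the construction of the formal completion $\widehat{Y}_X$ of $Y$ along the morphism $f$ from \cite[Section 2.1]{CPTVV}. By construction, $\widehat{Y}_X$ is a formal derived stack, and the map $f$ canonically factors as
\[
X \xrightarrow{\;\widehat{f}\;} \widehat{Y}_X \xrightarrow{\;\iota\;} Y,
\]
where $\iota$ is the natural inclusion. Moreover, the defining property of the formal completion gives $(\widehat{Y}_X)_{red} \simeq X_{red}$, so $\widehat{f}$ induces an isomorphism on reduced stacks, which is exactly the second-to-last condition of Definition \ref{shiftedPoi}.

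Next, I would show that $\omega$ descends to an $(n+1)$-shifted symplectic structure on $\widehat{Y}_X$. At the level of the de Rham complex, one has $\iota^*\LL_Y \simeq \LL_{\widehat{Y}_X}$ since formal completion along a closed substack does not change the tangent/cotangent complexes at points of $X$ (this is a standard fact for nilpotent thickenings and carries over to the formal derived setting). Consequently, $\iota^*\omega$ is a closed $2$-form of degree $n+1$ whose underlying morphism $\TT_{\widehat{Y}_X}\to \LL_{\widehat{Y}_X}[n+1]$ agrees with the quasi-isomorphism induced by $\omega$, hence is itself a quasi-isomorphism. Thus $\iota^*\omega$ is $(n+1)$-shifted symplectic on $\widehat{Y}_X$.

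Finally, I would transport the Lagrangian structure on $f$ to one on $\widehat{f}$. The homotopy $h\colon 0 \sim f^*\omega$ in $\cA^{2,cl}(X,n+1)$ is literally the same datum as a homotopy $0 \sim \widehat{f}^*(\iota^*\omega)$, because $f^*\omega = \widehat{f}^*\iota^*\omega$. So $h$ gives an isotropic structure on $\widehat{f}$. To check that it remains Lagrangian, I examine the map $\Theta_h\colon \TT_{\widehat{f}} \to \LL_X[n]$ from Lemma \ref{Thetah}. Since $\iota^*\TT_Y \simeq \TT_{\widehat{Y}_X}$, the relative tangent complexes $\TT_f$ and $\TT_{\widehat{f}}$ are canonically equivalent, and under this equivalence $\Theta_h$ for $\widehat{f}$ matches $\Theta_h$ for $f$. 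By hypothesis the latter is a quasi-isomorphism, hence so is the former.

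The only real subtlety — and where I would expect to spend the most care — is the comparison of (co)tangent complexes of $Y$ and $\widehat{Y}_X$ along $X$, together with the corresponding comparison of de Rham/negative cyclic complexes, in order to justify that both the closed $2$-form and the Lagrangian homotopy pull back cleanly. Granting the foundational results on formal completions of derived stacks from \cite{CPTVV}, this comparison is formal, and the lemma then follows by assembling $(\widehat{Y}_X,\iota^*\omega,\widehat{f},h)$ as the desired quadruple in Definition \ref{shiftedPoi}.
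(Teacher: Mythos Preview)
The paper does not give its own proof of this lemma: it is simply quoted as Lemma~2.3 of \cite{Sp16}, with no argument supplied. So there is nothing in the paper to compare your proposal against beyond the bare statement.

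That said, your proposal is the correct and essentially canonical way to prove the lemma: one verifies the four ingredients of Definition~\ref{shiftedPoi} for the quadruple $(\widehat{Y}_X,\iota^*\omega,\widehat{f},h)$. The only genuinely nontrivial step is the one you flag yourself, namely that the inclusion $\iota:\widehat{Y}_X\to Y$ induces an equivalence on cotangent complexes (equivalently, is formally \'etale), so that $\iota^*\omega$ remains non-degenerate and the relative tangent complexes $\TT_f$ and $\TT_{\widehat{f}}$ agree. This is a standard property of formal completions in the derived setting (see \cite[Sec.~2.1]{CPTVV}), and once granted, the rest of your argument goes through exactly as written.
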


To produce new Lagrangian structures we will use the following result.

\begin{lemma}\label{Lagrangian-lem} Let $S_1$ and $S_2$ be two derived Artin stacks, both equipped with an $n$-shifted symplectic
structure, and let $f_i:X\to S_i$, $i=1,2$, be morphisms, such that $(f_1,f_2):X\to S_1\times S_2$ is equipped with a
Lagrangian structure. Assume also that $g:L_1\to S_1$ is equipped with a Lagrangian structure. Let us consider
the homotopy fiber product diagram
\[
\xymatrix{
F\ar[r]^{i}\ar[d]^{\pi} & X\ar[d]^{f_1}\\
L_1\ar[r]^{g} & S_1
}
\]
Then the composition $F\to X\to S_2$ has a Lagrangian structure.
\end{lemma}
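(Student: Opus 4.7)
The statement is the standard principle that Lagrangian correspondences compose to Lagrangians, cast in the shifted setting. My plan is to split the argument in two: first produce an isotropic structure on $f_2\circ i:F\to S_2$ from the two given paths, and then verify that the induced map $\Theta:\TT_{f_2\circ i}\to \LL_F[n-1]$ is a quasi-isomorphism.

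For the first step, I work in the simplicial abelian group $\cA^{2,cl}(F,n)$. Endow $S_1\times S_2$ with its natural $n$-shifted symplectic form $\omega_1+\omega_2$ (external sum). The Lagrangian structure on $(f_1,f_2)$ is a path $h$ in $\cA^{2,cl}(X,n)$ from $0$ to $f_1^*\omega_1+f_2^*\omega_2$, and the Lagrangian structure on $g$ is a path $h'$ in $\cA^{2,cl}(L_1,n)$ from $0$ to $g^*\omega_1$. The homotopy fibre square supplies a canonical $2$-cell $f_1\circ i\simeq g\circ\pi$, identifying $i^*f_1^*\omega_1$ with $\pi^*g^*\omega_1$ in $\cA^{2,cl}(F,n)$. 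Then $\tilde h := i^*h - \pi^*h'$ is a path from $0$ to $(f_2\circ i)^*\omega_2$, giving the desired isotropic structure on $f_2\circ i$.

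For non-degeneracy, I will assemble, via the octahedral axiom, the four exact triangles
\begin{gather*}
\TT_g\to \TT_{L_1}\to g^*\TT_{S_1},\qquad \TT_{(f_1,f_2)}\to \TT_X\to f_1^*\TT_{S_1}\oplus f_2^*\TT_{S_2}, \\
\TT_F\to i^*\TT_X\oplus \pi^*\TT_{L_1}\to (f_1\circ i)^*\TT_{S_1},\qquad \TT_{f_2\circ i}\to \TT_F\to (f_2\circ i)^*\TT_{S_2},
\end{gather*}
together with the dual cofibre sequence for $\LL_F$ coming from the fibre square. Substituting the quasi-isomorphisms $\TT_{(f_1,f_2)}\simeq \LL_X[n-1]$ and $\TT_g\simeq \LL_{L_1}[n-1]$ supplied by the hypotheses, and the symplectic self-dualities $\TT_{S_i}\simeq \LL_{S_i}[n]$, the assembled diagram identifies the triangle containing $\TT_{f_2\circ i}$ with the shift by $[n-1]$ of the fibre-square triangle for $\LL_F$, producing a quasi-isomorphism $\TT_{f_2\circ i}\simeq \LL_F[n-1]$.

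The main obstacle is this last step: showing not merely that $\TT_{f_2\circ i}$ and $\LL_F[n-1]$ are abstractly equivalent (which is formal, given the triangles above), but that the specific map $\Theta_{\tilde h}$ produced from $\tilde h$ via Lemma \ref{Thetah} coincides with this equivalence. This requires tracking the construction of $\Theta_{\tilde h}$ through the differences of homotopies $i^*h-\pi^*h'$ and the octahedral assembly, and checking at each stage that signs, shifts, and the identifications $i^*f_1^*\omega_1\simeq \pi^*g^*\omega_1$ remain coherent.
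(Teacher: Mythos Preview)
Your proposal is correct and follows essentially the same route as the paper: construct the isotropic homotopy by subtracting the two given paths using the fibre-square identification $i^*f_1^*\omega_1\simeq\pi^*g^*\omega_1$, then obtain the non-degeneracy by repeated applications of the octahedral axiom to the tangent/cotangent triangles coming from the Lagrangian hypotheses and the fibre product. The paper organises the triangles a little differently---it works with the identification $\TT_i\simeq\pi^*\TT_g$ rather than your Mayer--Vietoris triangle for $\TT_F$, and passes through an intermediate triangle $f_1^*\TT_{S_1}\to\LL_X[n]\to\TT_{f_2}[1]$---but the skeleton is the same. Your explicit concern that the resulting equivalence $\TT_{f_2\circ i}\simeq\LL_F[n-1]$ be \emph{the} map $\Theta_{\tilde h}$ rather than merely some abstract equivalence is well taken; the paper's argument is at the same level of rigour on this point, comparing two triangles with matching outer terms and concluding the middle terms agree, without explicitly tracking $\Theta_h$ through the octahedra.
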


\begin{proof}
Let $\omega_1$ and $\omega_2$ be the $n$-shifted symplectic forms on $S_1$ and $S_2$.
Since $(f_1,f_2)$ is isotropic, we have a homotopy
\[
f_1^*\omega_1\sim-f_2^*\omega_2.
\]
So we have
\[
i^* f_2^*\omega_2\sim-i^*f_1^*\omega_1=-\pi^*g^*\omega_1\sim 0,
\]
where the last equality follows from the assumption that $g$ is Lagrangian. This shows that $f_2\circ i$ is isotropic. 

The definition of $F$ as a homotopy fiber product gives an identification of the relative tangent complexes
\begin{equation}\label{T-ix-eq} 
\TT_{i}\simeq \pi^* \TT_{g}.
\end{equation}
Hence, the Lagrangian structure on $g$ induces an isomorphism
\[ \TT_{i}\simeq \pi^*\LL_{L_1}[n-1].\]
Dually we get an exact triangle
\begin{equation}\label{ext1}
\pi^*\TT_{L_1}[-n]\to i^*\LL_{X}\to \LL_{F}\to \pi^*\TT_{L_1}[1-n].
\end{equation}
On the other hand, the Lagrangian structure on $(f_1,f_2): X\to S_1\times S_2$ gives an exact triangle
\[ \LL_{X}[n-1]\to\TT_{X}\to f_1^*\TT_{S_1}\oplus f_2^*\TT_{S_2}\to \LL_{X}[n].\]
 From this we get an exact triangle
\[ f_1^*\TT_{S_1}\to \LL_{X}[n]\to \TT_{f_2}[1]\to f_1^*\TT_{S_1}[1].\]
Now applying the octahedron axiom to the composition
\[
\TT_{F}\to i^*\TT_{X}\to i^*f_2^*\TT_{S_2}
\]
we get an exact triangle
\[i^*\TT_{f_2}[-1]\to \TT_{i}\to \TT_{f_2\circ i}\to i^*\TT_{f_2}.\]
Using the identification \eqref{T-ix-eq} of $\TT_{i}$, we can identify $\TT_{f_2\circ i}$ with the cone of
the composition $i^*\TT_{f_2}[-1]\to i^*f_1^*\TT_{S_1}[-1]\to \pi^*\TT_{g}$.
Finally, using the octahedron axiom for the latter composition we get an exact triangle
\[ \pi^*\TT_{L_1}[-1]\to i^*\LL_{X}[n-1]\to \TT_{f_2\circ i}\to \pi^*\TT_{L_1}.\]
Comparing it with \eqref{ext1} we deduce the quasi-isomorphism $\TT_{f_2\circ i}\simeq \LL_{F}[n-1]$.
\end{proof}

Let $(Y,\omega,f,h)$ be an $n$-shifted Poisson structure on $X$. Choose a quasi-inverse $\Theta_h^{-1}$ and consider
the composition
\[
\xymatrix{
\Pi_h : \LL_X[n]\ar[r]^{\Theta_h^{-1}} & \TT_f\ar[r] &\TT_X}.
\]
In the case $n=0$, taking the $0$-th cohomology, we get a morphism
\[
H^0(\Pi_h): H^0(\LL_X)\to H^0(\TT_X).
\]
We call this map  the \emph{classical shadow} of the $0$-shifted Poisson structure $(Y,\omega,f,h)$.

\begin{remark}
Definition \ref{shiftedPoi} has a strong motivation from the classical Poisson geometry. If $(X,\Pi)$ is a classical Poisson manifold, then $\Pi: T^*X\to TX$ is a Lie algebroid. Suppose it is integrable. Then the associated symplectic groupoid is a model for the quotient stack of $X$ by the singular foliation given by $\Pi$. Calaque, Pantev, T\"oen, Vaqui\'e and Vezzosi show that the quotient is a formal derived stack with $1$-shifted symplectic structure \cite{CPTVV2}, regardless whether the Lie algebroid is integrable.
\end{remark}

\subsection{Poisson structure on $\dCx$}\label{sec:PoiCplx}
First, we recall the construction of shifted symplectic structures on $\RR\Perf$ and $\dVt^\gr$.

Let us set 
\[
\fg:=\TT_{\RR\Perf}[-1], \ \ \fg^0:=\TT_{\dVt^\gr}[-1].
\]
Note that we have
$\fg_E\simeq \RsHom(E,E)$ (see Theorem \ref{MthmTV}), or globally
$$\fg\simeq \RsHom(\cE,\cE)\simeq\cE\otimes \cE^\vee,$$
where $\cE\in L_{parf}(\RR\Perf)$ is the universal perfect complex.

Similarly, we have
$\fg^0=\bigoplus_i \fg^0_i$
with $\fg^0_i\simeq\RsHom(\cE^i,\cE^i)$,
where $(\cE^i)_{i\in\Z}$ is the universal object in $L_{parf}(\dVt^\gr)$.

The composition of the trace map and the multiplication map 
\begin{align}\label{kappa}
\xymatrix{
\fg\otimes \fg\ar[r]^m & \fg\ar[r]^{\tr} & \cO_{\RR\Perf}}
\end{align} 
defines a $2$-form of degree $2$
\[
{\bigwedge}^2\TT_{\RR\Perf}\simeq \Sym^2\fg[2]\to \cO_{\RR\Perf}[2].
\]
This 2-form is clearly non-degenerate.
In Section 2.3 \cite{PTVV}, Pantev, T\"oen, Vaqui\'e and Vezzosi show the above bilinear form is the underlying 2-form of the closed 2-form $Ch(\cE)_2$, the 2nd Chern character of the universal perfect complex $\cE$. 

\begin{theorem} (Theorem 2.12 \cite{PTVV})\label{symperf}
The closed 2-form $Ch(\cE)_2$ defines a 2-shifted symplectic structure on $\RR\Perf$.
\end{theorem}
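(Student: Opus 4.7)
The plan is to establish the two defining conditions of a $2$-shifted symplectic structure on $\RR\Perf$: closedness of $Ch(\cE)_2$, i.e. that it lifts to an element of $\cA^{2,cl}(\RR\Perf, 2)$, together with non-degeneracy of its underlying $2$-form. Since the identification of the underlying $2$-form with the trace-composition pairing \eqref{kappa} has already been recalled, the task reduces to two pieces: (i) constructing $Ch(\cE)_2$ as a genuine closed form of degree $2$, and (ii) showing that the induced map $\TT_{\RR\Perf}\to \LL_{\RR\Perf}[2]$ is a quasi-isomorphism.

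For step (i), I would invoke the general construction of Chern characters for perfect complexes over derived Artin stacks. Any perfect complex $F$ on a derived stack $X$ carries a canonical Chern character $ch(F)\in \Gamma(X, NC^w(X))$, built from the trace map of the endomorphism dg-algebra of $F$ into the (negative) cyclic complex of $\cO_X$ and using the identification of the Hochschild complex of $\Perf(X)$ with that of $\cO_X$. The weight-$p$ component of $ch(F)$ then produces a closed $p$-form of the appropriate degree via the projection \eqref{formpr}. Applying this to the universal object $\cE$ on $\RR\Perf$ and extracting the weight-$2$ piece yields $Ch(\cE)_2\in \cA^{2,cl}(\RR\Perf, 2)$. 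The compatibility of this construction with the explicit formula \eqref{kappa} for the underlying $2$-form is the content of Section 2.3 of \cite{PTVV}.

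For step (ii), Theorem \ref{MthmTV} identifies $\TT_{\RR\Perf}\simeq \fg[1]$ with $\fg\simeq \cE\ot\cE^\vee$, and dually $\LL_{\RR\Perf}\simeq \fg^\vee[-1]$. Thus $\LL_{\RR\Perf}[2]\simeq \fg^\vee[1]$, and non-degeneracy reduces to verifying that the pairing $\fg\ot\fg\to \cO_{\RR\Perf}$ arising from \eqref{kappa} induces a quasi-isomorphism $\fg\to \fg^\vee$. Under the canonical identification $\fg^\vee\simeq \cE^\vee\ot\cE\simeq \fg$ coming from dualizability of the perfect complex $\cE$, this pairing becomes the standard trace pairing on endomorphisms, whose non-degeneracy is immediate.

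The main obstacle is squarely in step (i): producing $Ch(\cE)_2$ as an authentic element of $\cA^{2,cl}(\RR\Perf,2)$, rather than merely a $2$-form, requires careful work with the weighted negative cyclic complex $NC^w$ and a functorial lift of the Chern character from $K$-theory to negative cyclic homology for families of perfect complexes over derived stacks. Non-degeneracy, by contrast, is a formal consequence of the dualizability of $\cE$, and matching the underlying $2$-form with \eqref{kappa} is already in hand.
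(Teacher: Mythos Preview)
Your proposal is correct and follows essentially the same approach as the paper's sketch: closedness is obtained by invoking the categorical Chern character landing in the weighted negative cyclic complex (the paper cites \cite{TVe15} for the map $Ch:\RR\Perf\to |NC|$ whose weight-$2$ piece gives $Ch(\cE)_2\in\pi_0(\cA^{2,cl}(\RR\Perf,2))$), and non-degeneracy follows from the trace pairing on $\fg\simeq\cE\otimes\cE^\vee$. The only additional detail the paper supplies is the explicit identification of the underlying $2$-form with $\frac{1}{2}\tr(a_\cE^2)$ via the Atiyah class, showing that $a_\cE^2$ is adjoint to the multiplication $\fg\otimes\fg\to\fg$; you defer this to \cite{PTVV}, which is fine since you already take \eqref{kappa} as given.
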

 
We briefly recall the proof of \cite{PTVV} for our convenience. The Atiyah class of $\cE\in L_{parf}(\RR\Perf)$
\[
a_\cE: \cE\to \cE\otimes_{\cO_{\RR\Perf}}\cE\otimes_{\cO_{\RR\Perf}}\cE^\vee
\]
 is the adjoint of the multiplication map
\[
\cE\otimes \fg \to \cE.
\] 
In \cite{TVe15}, T\"oen and Vezzosi gave a categorical construction of Chern character, which is a morphism of derived stacks
\begin{equation}\label{Ch}
Ch: \RR\Perf\to |NC|,
 \end{equation}  
 and its weight two piece is
 \[
 Ch(\cE)_2\in H^0(NC^w(\RR\Perf)(2))\simeq \pi_0(\cA^{2,cl}(\RR\Perf,2)).
 \]
 Here $|NC|$ is the simplicial set associated to $NC$ by applying the Dold-Kan denormalization.
 We refer to \cite{TVe15} for the details of the construction of $Ch$.
Denote the closed 2-form $Ch_2$ by $\kappa$. 

  Composing the map \ref{Ch} with the projection \ref{formpr}, the underlying 2-form of $\kappa$ is
\[
 \frac{\tr(a^2_\cE)}{2} \in H^2(\RR\Perf, \wedge^2\LL_{\RR\Perf}).
\]
Observe that 
\[
a^2_\cE:\cE\otimes \fg\otimes \fg\to \cE\otimes \fg\to \cE
\] is adjoint to the multiplication map $\fg\otimes \fg\to \fg$. Therefore the underlying 2-form of $\kappa$ is equal to $\frac{1}{2}\tr\circ m$.
As an abuse of notations, we will use $\kappa$ to denote both the closed 2-form and its underlying bilinear form \eqref{kappa} (multiplying by $1/2$).

Let $(\cE^i)_{i\in\Z}$ be the universal object in $L_{parf}(\dVt^\gr)$. We define bilinear forms
\[\xymatrix{
\alpha_i: \fg^0_i\otimes \fg^0_i\ar[r]^m &\fg^0_i\ar[r]^{\tr}&\cO_{\dVt^\gr}}
\] for $i\in\ZZ$.

 By the identification $\Vect\simeq \bigsqcup_{n} BGL_n$ and Section 1.2 \cite{PTVV}, 
 \[
 \alpha:=\frac{1}{2}\sum_i (-1)^{i+1}\alpha_i \in \pi_0(\cA^{2,cl}(\dVt^\gr,2))
  \]  defines a 2-shifted symplectic structure on $\dVt^\gr$. 
  Indeed, this follows from Theorem \ref{symperf} by interpreting $\alpha$ as $-s^*\kappa$, where $s$ is the map from $\dVt^\gr$ to $\RR\Perf$ defined in Section \ref{sec_Rperf}.
 
Combing these two constructions, 
\[
\omega:=(\kappa,\alpha)\in\pi_0(\cA^{2,cl}(\RR\Perf\times \dVt^\gr,2))
\]
defines a 2-shifted symplectic structure on $\RR\Perf\times \dVt^\gr$. 

Recall that we have morphisms $p,q$ and $s$ between $D^-$-stacks:
 \begin{align}\label{dig1}
 \xymatrix{
 &\dCx\ar[ld]_q\ar[rd]^p&\\
 \RR\Perf && \dVt^\gr\ar[ll]_s
 }
 \end{align}
 An object $E$ in $\dCx$ is a bounded complex of vector bundles $\ldots\to E(i)\to  E(i+1)\to\ldots$, where $E(i)$ is the $i$-th weight component.  We have $q(E)=\{\ldots\to E^i\to  E^{i+1}\to\ldots\}$ with $E^i=E(i)$ treated as a perfect complex, and $p(E)=\bigoplus_i E(i)[-i]$. The map $s$ sends a graded vector bundle $\bigoplus_i E(i)[-i]$ to $\oplus_iE^i[-i]$. 
Note that the above triangle is \emph{not} commutative. Because $E(i)$ has trivial internal differential, we will identify $\{\ldots\to E^i\to  E^{i+1}\to\ldots\}\in \RR\Perf$ and $\{\ldots\to E(i)\to  E(i+1)\to\ldots\}\in \dCx$ in the object level. However, we must keep in mind that the morphism spaces are different.
 
 By abuse of notation we denote
the pull-back of the shifted tangent complex $q^*\TT_{\RR\Perf}[-1]$ (resp., $p^*\TT_{\dVt^\gr}[-1]$)
in $L_{parf}(\dCx)$ still by $\fg$ (resp., $\fg^0$).  

Let us denote the shifted tangent complex $\TT_{\dCx}[-1]$ by $\fp^+$. By the calculation of tangent complex in the end of Section \ref{sec_mod_cplx}
$$\fp^+_E\simeq \sigma^{\geq 0}\fg_E$$
where $\sigma^{\geq 0}$ is the stupid truncation of complexes,
and we have natural morphisms 
\[
\fp^+\to \fg, ~~~ \fp^+\to \fg^0.
\]

Let 
\[f:=(q,p): \dCx\to \RR\Perf\times \dVt^\gr
\] with $q,p$
being the morphisms of $D^-$-stacks in \ref{dig1}.
 \begin{remark}
Although $\RR\Perf$ is strictly speaking not a derived Artin stack (since it is only locally geometric), symplectic structure can be anyway defined. We refer to \cite[Section 2.3]{PTVV} for the explanation of this technical point.
\end{remark}

\begin{theorem}\label{Lagstr}
There exists a Lagrangian structure on $f$.
\end{theorem}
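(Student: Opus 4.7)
The plan has two parts: constructing the isotropic structure on $f$, and verifying the Lagrangian non-degeneracy.

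\textbf{Step 1 (isotropic structure via rescaling the differential).} I would exhibit a null-homotopy $h : f^*\omega \simeq 0$ in $\cA^{2,cl}(\dCx, 2)$ using a $1$-parameter family of complexes. For any test algebra $A$, any complex $V$ of finitely generated projective $A$-modules, and any $t \in A$, the rescaled complex $V_t := (V, t\cdot d_V)$ still satisfies $(t d_V)^2 = 0$ and depends functorially on $(t, V)$, producing a morphism
\[
r : \cA^1 \times \dCx \to \dCx, \qquad (t, V) \mapsto V_t.
\]
Here $r|_{t=1}$ is the identity while $r|_{t=0}$ factors through the zero-differential section $\dCx \xrightarrow{p} \dVt^\gr \xrightarrow{z} \dCx$, with $q\circ z = s$. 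Composing $q\circ r$ with the categorical Chern character $Ch : \RR\Perf \to |NC|$ of \cite{TVe15} and extracting the weight-$2$ component yields an $\cA^1$-homotopy from $q^*\kappa$ (at $t = 1$) to $(sp)^*\kappa = -p^*\alpha$ (at $t = 0$). Hence $f^*\omega = q^*\kappa + p^*\alpha \simeq 0$, providing the required isotropic structure.

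\textbf{Step 2 (Lagrangian non-degeneracy).} I would verify pointwise that $\Theta_h : \TT_f \to \LL_\dCx[1]$ is a quasi-isomorphism. At a complex $V$, the tangent map of $f$ is $(\iota, \pi) : \fp^+ \to \fg \oplus \fg^0$, with $\iota$ the inclusion $\End_{\geq 0}(V) \hookrightarrow \End_\bullet(V)$ and $\pi$ the projection onto $\End_0(V)$. Since $\TT_\dCx = \fp^+[1]$ gives $\LL_\dCx[1] \simeq (\fp^+)^\vee$, and rotating the exact triangle $\TT_f \to \TT_\dCx \to f^*\TT_{\RR\Perf\times\dVt^\gr}$ identifies $\TT_f$ with $\mathrm{cofib}(\fp^+ \xrightarrow{(\iota,\pi)} \fg \oplus \fg^0)$, the task reduces to identifying this cofiber with $(\fp^+)^\vee$. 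The chain map $\Phi : \fg \oplus \fg^0 \to (\fp^+)^\vee$ obtained from $\omega^\sharp$ followed by the dual of $(\iota,\pi)$ reads
\[
\Phi(a, a^0)(c) = \tfrac{1}{2}\str(ac) + \tfrac{1}{2}\sum_{i\in\ZZ}(-1)^{i+1}\tr\bigl(a^0_i (c_0)_i\bigr), \qquad c \in \fp^+.
\]
Using the non-degeneracy of the trace pairings $\End_n \otimes \End_{-n} \to k$, a direct calculation shows that $\Phi(a, a^0) = 0$ forces $a_n = 0$ for $n < 0$ and $a^0 = a_0$, so $\ker\Phi$ coincides with the image of $(\iota,\pi)$; moreover $\Phi$ is surjective (solve first for the negative components of $a$, then for $a_0$ and $a^0$). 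Since $\Phi$ is a chain map, the induced differential on the cokernel matches that on $(\fp^+)^\vee$. Combined with the null-homotopy from Step 1 this identifies $\Theta_h$ with a quasi-isomorphism.

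The hardest point is Step 1: the underlying $2$-form of $f^*\omega$ vanishes \emph{identically} by the short trace calculation (the $\kappa$ and $\alpha$ contributions cancel on $\fp^+ \otimes \fp^+$), but lifting this identity to a null-homotopy of \emph{closed} $2$-forms in $\cA^{2,cl}(\dCx, 2)$ requires the $\cA^1$-interpolation together with the functoriality of the cyclic Chern character. A secondary bookkeeping obstacle in Step 2 is tracking the signs that appear when comparing $(\fg \oplus \fg^0)/\mathrm{image}(\iota, \pi)$ with the dual complex $(\fp^+)^\vee$.
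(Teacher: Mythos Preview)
Your overall two-step structure matches the paper exactly, and your Step~2 is essentially the paper's argument made more explicit: you verify by hand that $\fp^+\hookrightarrow \fg\oplus\fg^0$ is maximal isotropic for $\kappa\oplus\alpha$, which is precisely what the paper asserts when it says ``$\fp^+$ is maximal isotropic with respect to $\kappa\oplus\alpha$''.

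The genuine difference is in Step~1. The paper obtains the null-homotopy by invoking additivity of the categorical Chern character under exact triangles (from \cite{TVe15}): since a bounded complex $\cE$ is a successive extension of its graded pieces $\cE^i[-i]$, one has $Ch(q(\cE))=Ch(q'(\cE))$ in $\pi_0(\Map(\dCx,|NC|))$, hence $q^*\kappa=p^*s^*\kappa=-p^*\alpha$. Your rescaling $(V,d)\mapsto(V,t\,d)$ is the Rees/deformation-to-the-associated-graded construction that morally underlies that additivity statement, so the two arguments are cousins. However, as written your Step~1 has a gap: you produce a morphism $\cA^1\times\dCx\to|NC|$ and then assert that its restrictions to $t=0$ and $t=1$ are homotopic in the \emph{simplicial} set $\cA^{2,cl}(\dCx,2)$. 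An $\cA^1$-path of derived stacks is not automatically a path in a mapping space; you need an $\cA^1$-invariance statement for closed forms, namely that the two restriction maps $\cA^{2,cl}(\cA^1\times X,n)\to\cA^{2,cl}(X,n)$ are homotopic. This follows from the algebraic Poincar\'e lemma (acyclicity of the de Rham complex of $\cA^1$ inside $NC^w$), but it is a nontrivial input you should state and justify. Once you add that lemma your argument goes through; the paper's route sidesteps this by packaging the same content into the cited additivity theorem.
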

\begin{proof}
We split the proof to two steps. First, we show that there exists a homotopy $h:0\sim f^*\omega$. This is a property of the closed 2-form. In the second step, we check that the induced map $\Theta_h$ is a quasi-isomorphism. This is a property of the underlying 2-form.

To prove the existence of a homotopy $h:0\sim f^*\omega$, it suffices to show that $f^*\omega$ represents the zero class in $\pi_0(\cA^{2,cl}(\dCx,2))$.
Consider an alternative morphism of $D^-$-stacks
\[
q^\p:\dCx\to \RR\Perf
\] by sending $\cE$ to $\bigoplus_i \cE^i[-i]$. 
 
 \begin{align*}
 \xymatrix{
 &&\dCx\ar[ld]_{q^\p}\ar[rd]^{p}&\\
 |NC|&\RR\Perf\ar[l]_{Ch} && \dVt^\gr\ar[ll]_{s}
 }
 \end{align*}
 
The key observation is  
\begin{enumerate}
\item[(1)]
$q^\p=s\circ p$,
\item[(2)] $Ch(q(\cE)))=Ch(q^\p(\cE))$ in $\pi_0(Map(\dCx,|NC|))$.
\end{enumerate}
Observation (1) is clear. Observation (2) follows from the functorial property of Chern character with respect to the exact triangle. We refer to \cite{TVe15} for the proof of this statement.
Then
\begin{align*}
f^*\omega&=q^*\kappa+p^*\alpha=q^*\kappa-p^*s^*\kappa\\
&=Ch_2(q(\cE))-Ch_2(q^\p(\cE))\\
&=0 \in\pi_0(\cA^{2,cl}(\dCx,2)).
\end{align*}
This finishes the proof of the first part.

We pick any path $h: 0\sim f^*\omega$. Recall that $q^*\TT_{\RR\Perf}[-1], \TT_{\dCx}[-1]$ and $p^*\TT_{\dVt^\gr}[-1]$ are denoted by $\fg, \fp^+$ and $\fg^0$ respectively. So $f^*\TT_{\RR\Perf\times \dVt^{\gr}}[-1]$ is equal to $\fg\oplus \fg^0$.
By Lemma \ref{Thetah}, there is a commutative diagram
\[
\xymatrix{
\fp^+\ar[r]^{\Delta}& \fg\oplus\fg^0\ar[r]\ar[d]^{\omega[-1]} &(\fg\oplus\fg^0)/\fp^+\ar[d]^{\Theta_h[-1]}\\
& (\fg\oplus\fg^0)^\vee\ar[r]^{\Delta^\vee} & (\fp^+)^\vee
}
\]
where $(\fg\oplus\fg^0)/\fp^+$ is the mapping cone of $\Delta$.
Here $\omega[-1]$ is the (shift) of underlying 2-form, defined by the bilinear form $\kappa\oplus\alpha$. The degree zero part of $\Delta$ is simply the diagonal embedding of $\fg^0\subset \fp^+$ into $\fg^0\oplus\fg^0$. Therefore, $\fp^+$ is maximal isotropic with respect to $\kappa\oplus\alpha$. In other word, $\TT_{\dCx}$ is Lagrangian in $f^*\TT_{\RR\Perf\times \dVt^{\gr}}$. It follows that $\Theta_h$ is a quasi-isomorphism.

\end{proof}
\begin{remark}
By the exactness of the first row, there is a map from $(\fg\oplus\fg^0)/\fp^+$ to $\fp^+[1]$. Compose it with a quasi-inverse of $\Theta_h[-1]$. We get the degree one bi-vector field $\Pi_h: (\fp^+)^\vee\to \fp^+[1]$. 
\end{remark}

\begin{remark}
It would be interesting to figure out whether $\pi_1(\cA^{2,cl}(\dCx,2),0)$ vanishes or not. If it does not vanish then non-homotopic paths might give different isotropic structures.
 \end{remark}
 
From Theorem \ref{Lagstr} and Lemma \ref{LagPoi}, we get the following important corollary.
\begin{theorem} \label{1-Poi}
The derived stack $\dCx$ has a $1$-shifted Poisson structure.
\end{theorem}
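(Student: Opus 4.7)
My plan is to deduce this theorem essentially formally from the two preceding results: Theorem \ref{Lagstr} (which produces a Lagrangian structure on the morphism $f = (q,p): \dCx \to \RR\Perf \times \dVt^\gr$) and Lemma \ref{LagPoi} (which upgrades any Lagrangian structure on a map into a shifted target to a shifted Poisson structure on the source).

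The first step is to record that the target $\RR\Perf \times \dVt^\gr$ carries a $2$-shifted symplectic structure $\omega = (\kappa,\alpha)$. The factor $\kappa$ is the $2$-shifted symplectic form of Theorem \ref{symperf}, and $\alpha = -s^*\kappa$ is its pullback along $s:\dVt^\gr \to \RR\Perf$, so that $\omega$ is a closed $2$-form whose underlying bilinear pairing is the direct sum of two non-degenerate pairings, hence non-degenerate. The second step is to apply Theorem \ref{Lagstr}, which furnishes a Lagrangian structure $h$ on $f:\dCx \to \RR\Perf \times \dVt^\gr$. Setting $n = 1$, the target is then $(n+1)$-shifted symplectic, which is exactly the input data required by Lemma \ref{LagPoi}.

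The third step is to invoke Lemma \ref{LagPoi} directly: the composition $\widehat{f}:\dCx \to \widehat{(\RR\Perf \times \dVt^\gr)}_{\dCx}$ with the formal completion of the target along $\dCx$ produces the required quadruple $(Y,\omega,f,h)$ in the sense of Definition \ref{shiftedPoi}, since passing to the formal completion automatically ensures the condition that $\dCx_{\mathrm{red}} \to Y_{\mathrm{red}}$ is an isomorphism. This yields a $1$-shifted Poisson structure on $\dCx$ in the sense of Spaide \cite{Sp16}, and hence, via Melani--Safronov \cite{MelSaf16}, in the sense of \cite{CPTVV} as well.

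There is essentially no obstacle here because all the substantive content has been packaged into Theorem \ref{Lagstr} and Lemma \ref{LagPoi}. The only small item to be careful about is that $\RR\Perf$ is not literally a derived Artin stack; as noted in the remark preceding Theorem \ref{Lagstr}, the shifted symplectic formalism and the Lagrangian/Poisson machinery still apply in this setting, so the proof goes through without modification.
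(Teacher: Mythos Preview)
Your proof is correct and follows exactly the paper's own argument: the theorem is stated as an immediate consequence of Theorem~\ref{Lagstr} and Lemma~\ref{LagPoi}, and your write-up simply unpacks that deduction (including the same caveat about $\RR\Perf$ not being an Artin stack).
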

 
Using the techniques developed in \cite{PTVV}, we can pull back the Poisson structure constructed in Theorem \ref{1-Poi} to the mapping spaces with Calabi-Yau source. 

For $X,Y\in D^-St(k)$, denote $\Map(X,Y)$ for the derived mapping stacks between $X$ and $Y$. For its precise definition,  we refer to Section 2.2.6.3 of \cite{HAGII}. There is a natural evaluation map
\[
ev: X\times \Map(X,Y)\to Y.
\]
A very useful technique to construct new shifted symplectic structure out of a given one is by pulling back the shifted symplectic form via $ev$ and integrating along the fiber. For this to work, one needs some additional structures on $X$ called $\cO$-compactness and $d$-orientation. 
The definition of $\cO$-compactness and $d$-orientation can be found in Section 2.1 of \cite{PTVV}. We just remark that a smooth projective Calabi-Yau $d$-fold $X$ is $\cO$-compact with $d$-orientation. The choice of a $d$-orientation $[X]$ is equivalent with a choice of the isomorphism $\omega_X\cong \cO_X$. 

We recall one of the main results in \cite{PTVV}.
\begin{theorem}(Theorem 2.5 \cite{PTVV})\label{int_omega}
Let $Y$ be a derived Artin stack equipped with an $n$-shifted symplectic form $\omega$. Let $X$ be an $\cO$-compact derived stack equipped with an $d$-orientation $[X]$. Assume that the derived mapping stack $\Map(X,Y)$ is itself a derived Artin stack locally of finite presentation over $k$. Then $\Map(X,Y)$ carries a canonical $(n-d)$-shifted symplectic structure.
\end{theorem}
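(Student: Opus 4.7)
The plan is to follow the AKSZ-type construction: pull back the shifted symplectic form on $Y$ along the evaluation map, and then ``integrate along the fiber $X$'' using the orientation to descend to a closed $(n-d)$-shifted 2-form on $\Map(X,Y)$.

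First, I would pull back $\omega$ along $ev\colon X\times\Map(X,Y)\to Y$ to obtain a closed 2-form $ev^*\omega\in\cA^{2,cl}(X\times\Map(X,Y),n)$. Next, I would construct an integration map
\[
\textstyle\int_{[X]}\colon \cA^{2,cl}(X\times\Map(X,Y),n)\longrightarrow \cA^{2,cl}(\Map(X,Y),n-d).
\]
The $\cO$-compactness of $X$ ensures that for a perfect complex $\cF$ pulled back from $\Map(X,Y)$ one has an identification $(p_{\Map})_*\pi_X^*\cF\simeq \RR\Gamma(X,\cO_X)\otimes\cF$; composing with the orientation $[X]\colon\RR\Gamma(X,\cO_X)\to k[-d]$ produces the desired shift by $-d$. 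This construction extends termwise to the whole negative cyclic complex $NC^w$. The essential compatibility to check is that the resulting map commutes with the total differential $d_{\LL}+d_{DR}$, which boils down to the fact that $[X]$ annihilates the image of the internal de Rham differential applied to functions on $X$. Setting $\omega_{\Map}:=\int_{[X]}ev^*\omega$ gives the candidate closed $(n-d)$-shifted 2-form on $\Map(X,Y)$.

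Second, I would verify non-degeneracy pointwise. At an $A$-point $f\colon X\otimes A\to Y$ of $\Map(X,Y)$, the tangent complex takes the form $\TT_{\Map(X,Y),f}\simeq \RR\Gamma(X\otimes A, f^*\TT_Y)$; write $C$ for this perfect $A$-module. The non-degeneracy of $\omega$ gives $f^*\TT_Y\simeq f^*\LL_Y[n]$, hence $C\simeq\RR\Gamma(X\otimes A,f^*\LL_Y)[n]$. The orientation combined with $\cO$-compactness provides the Serre-duality pairing
\[
\RR\Gamma(X\otimes A, f^*\LL_Y)\simeq \RR\Gamma(X\otimes A, f^*\TT_Y)^\vee[-d] = C^\vee[-d],
\]
so combining yields a quasi-isomorphism $C\xrightarrow{\sim}C^\vee[n-d]$. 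Unwinding the construction of $\int_{[X]}$, this is precisely the map induced by $\omega_{\Map}$ at $f$, giving the required non-degeneracy.

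The main obstacle is constructing the integration map at the level of \emph{closed} forms, i.e., lifting it to the negative cyclic complex, and checking that it commutes with the mixed differential so that closed forms are sent to closed forms. Once this functorial integration is in hand, the non-degeneracy and the tangent-level identification follow essentially formally from the axioms of $d$-orientation and $\cO$-compactness.
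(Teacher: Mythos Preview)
Your proposal is correct and follows the same AKSZ-type approach that the paper itself sketches: the paper does not give its own proof of this theorem but cites \cite{PTVV} and summarizes the construction as the composition $k[2-n](2)\xrightarrow{\omega} NC^w(Y)\xrightarrow{ev^*} NC^w(X\times\Map(X,Y))\xrightarrow{\int_{[X]}} NC^w(\Map(X,Y))[-d]$, which is exactly your pull-back-then-integrate strategy. Your outline in fact supplies more detail than the paper's summary, in particular the pointwise non-degeneracy argument via the Serre-duality pairing coming from the orientation; this is indeed how it is done in \cite{PTVV}.
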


The construction of the symplectic form on $\Map(X,Y)$ can be summarized as follows. The $d$-orientation $[X]$ allows one to define a map 
\[
\int_{[X]}: NC^\omega(X\times \Map(X,Y))\to NC^\omega(\Map(X,Y))[-d].
\]
Then the symplectic structure on $\Map(X,Y)$, denoted by $\int_{[X]}\omega$, is defined to be the composition 
\[
\xymatrix{ k[2-n](2)\ar[r]^\omega & NC^\omega(Y)\ar[r]^{ev^*} &NC^\omega(X\times\Map(X,Y))\ar[r]^{\int_{[X]}} & NC^\omega(\Map(X,Y))[-d].
}
\]

Using a similar idea, we may also pull back Lagrangian structure via the evaluation map. The following theorem is due to Calaque.
\begin{theorem} (\cite[Theorem 2.9 ]{Sp16}) \label{LagMap}
Let $X$, $Y$, $Z$ be derived Artin stacks and $f : Y\to Z$ a map. Assume $X$ is $\cO$-compact with $d$-orientation $[X]$. Assume the stacks $\Map(X,Y)$ and $\Map(X,Z)$ are derived Artin stacks locally of finite presentation over $k$. Then for any $g:X\to Y$ we have a map: 
\[
  \int_{[X]} ev^*(g) : Lagr(f, \omega) \to Lagr(f \circ g,   \int_{[X]} ev^*(\omega)),
  \]
that is, from Lagrangian structures on $f$ to Lagrangian structures on $f\circ g$.
\end{theorem}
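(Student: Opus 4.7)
The argument follows the blueprint of PTVV's proof of Theorem \ref{int_omega}, extended so as to transport the full Lagrangian datum (the isotropic path together with the non-degeneracy condition) rather than only the underlying closed $2$-form. The key observation is that the two evaluation maps
\[
\mathrm{ev}_Y: X\times \Map(X,Y)\to Y, \qquad \mathrm{ev}_Z: X\times \Map(X,Z)\to Z
\]
are compatible with the induced morphism $\Map(X,f):\Map(X,Y)\to \Map(X,Z)$ via the identity $\mathrm{ev}_Z\circ(\id_X\times \Map(X,f)) = f\circ \mathrm{ev}_Y$. Under the standard interpretation of the theorem, the sought Lagrangian structure will live on $\Map(X,f)$ and specialize at a point $g:X\to Y$ to a structure on $f\circ g$.

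The first step is to construct the isotropic part. A Lagrangian structure on $f$ is recorded by a homotopy $h: 0\sim f^*\omega$ in $\cA^{2,cl}(Y,n)$. I would pull this back along $\mathrm{ev}_Y$ and then apply the fiber integration operator
\[
\int_{[X]}: NC^w(X\times \Map(X,Y))\to NC^w(\Map(X,Y))[-d].
\]
Using naturality of $\int_{[X]}$ with respect to base change along morphisms of the form $\id_X\times (-)$ together with the compatibility identity above, this yields a homotopy
\[
\int_{[X]}\mathrm{ev}_Y^*\, h : 0\sim \Map(X,f)^*\int_{[X]}\mathrm{ev}_Z^*\omega
\]
in $\cA^{2,cl}(\Map(X,Y),\,n-d)$, which is exactly the required isotropic structure relative to the integrated symplectic form produced by Theorem \ref{int_omega}.

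The second step is to check that the associated $\Theta_{\int h}$ is a quasi-isomorphism. At a point $g\in \Map(X,Y)$ one has $\TT_{\Map(X,Y),g}\simeq \RR\Gamma(X, g^*\TT_Y)$, while the relative tangent complex is $\TT_{\Map(X,f),g}\simeq \RR\Gamma(X, g^*\TT_f)$; the cotangent complex $\LL_{\Map(X,Y),g}$ may be identified, via the Serre-type duality provided by the $d$-orientation $[X]:\RR\Gamma(X,\cO_X)\to k[-d]$, with a shifted dual of $\RR\Gamma(X, g^*\TT_Y)$. Under these identifications the induced morphism $\Theta_{\int h}$ at $g$ factors as $\RR\Gamma(X, g^*\Theta_h)$ post-composed with the duality pairing, so it inherits the property of being a quasi-isomorphism from $\Theta_h:\TT_f\to \LL_Y[n-1]$, which is a quasi-isomorphism by the Lagrangian hypothesis on $f$.

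The main obstacle is the careful bookkeeping of shifts and the identification of tangent and cotangent complexes of the mapping stacks, together with the verification that the pairing arising from the $d$-orientation is genuinely perfect on the relevant Ext groups — essentially the derived Serre duality at the heart of PTVV's construction, now applied \emph{relatively} to $f$ so as to intertwine with the exact triangle defining $\TT_f$. A secondary technicality is that $\int_{[X]}$ must commute with the de Rham differential of $NC^w$ up to coherent homotopy, which is what ensures the pulled-back isotropic path actually descends to a path of \emph{closed} $2$-forms after integration rather than merely a homotopy of underlying $2$-forms.
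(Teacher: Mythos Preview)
The paper does not supply its own proof of this theorem: it is stated as a result due to Calaque and cited from \cite[Theorem 2.9]{Sp16}, with no argument given beyond the remark that it uses ``a similar idea'' to Theorem \ref{int_omega}. So there is nothing in the paper to compare your proposal against.

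That said, your sketch is the standard argument and is essentially what one finds in Calaque's and Spaide's treatments. You correctly identified that the statement as printed is awkward --- the phrase ``Lagrangian structures on $f\circ g$'' and the notation $Lagr(f\circ g,\int_{[X]}ev^*(\omega))$ really refer to the induced morphism $\Map(X,f):\Map(X,Y)\to\Map(X,Z)$, with $g$ naming a point of $\Map(X,Y)$ --- and you proceeded accordingly. The two-step structure (transport the isotropic path via $\mathrm{ev}_Y^*$ and $\int_{[X]}$ using the compatibility $\mathrm{ev}_Z\circ(\id_X\times\Map(X,f))=f\circ\mathrm{ev}_Y$; then deduce non-degeneracy pointwise from $\Theta_h$ via the $d$-orientation duality on $\RR\Gamma(X,-)$) is exactly the expected extension of the PTVV argument, and the technical caveats you flag (compatibility of $\int_{[X]}$ with the mixed de Rham differential, and the perfectness of the orientation pairing) are the genuine content.
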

Another important result of \cite{PTVV} says that the intersection of Lagrangians in a shifted symplectic derived stack is again shifted symplectic.

\begin{theorem}(\cite[Theorem 2.9]{PTVV}) \label{Lagintersection}
Let 
\[
\xymatrix{
& Y\ar[d]^g\\
X\ar[r]_f & F,
}\] be a diagram of derived Artin stacks, $\omega$ be an $n$-shifted symplectic structure on $F$, and $f$ and $g$ be equipped with Lagrangian structures. Then the derived Artin stack $X\times^h_F Y$ is equipped with a canonical $(n-1)$-shifted symplectic structure.
\end{theorem}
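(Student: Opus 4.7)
The plan is to produce a canonical $(n-1)$-shifted symplectic structure on $Z := X\times^h_F Y$ in two steps: first construct a closed $2$-form of degree $n-1$ on $Z$, then verify that the induced map on tangent complexes is a quasi-isomorphism.

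For the construction, let $p:Z\to X$ and $q:Z\to Y$ denote the two projections from the homotopy fiber product, so that the defining square supplies a canonical homotopy $f\circ p \simeq g\circ q$, and hence a canonical homotopy $(f\circ p)^*\omega \simeq (g\circ q)^*\omega$ in $\cA^{2,cl}(Z,n)$. The Lagrangian structures provide homotopies $h_f:0\sim f^*\omega$ and $h_g:0\sim g^*\omega$ in $\cA^{2,cl}(X,n)$ and $\cA^{2,cl}(Y,n)$ respectively. Pulling $p^*h_f$ and $q^*h_g$ back to $Z$ and concatenating with the canonical homotopy $(f\circ p)^*\omega \simeq (g\circ q)^*\omega$, we obtain a loop based at $0$ in the simplicial set $\cA^{2,cl}(Z,n)$. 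Since $\cA^{2,cl}(-,n)$ is the Dold-Kan denormalization of a cochain complex shifted by $-n$, such a loop corresponds canonically to a point in $\cA^{2,cl}(Z,n-1)$. This produces the desired closed $2$-form $\widetilde{\omega}$ of degree $n-1$ on $Z$.

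For non-degeneracy, the homotopy fiber product yields the exact triangle
\[ \TT_Z\to p^*\TT_X\oplus q^*\TT_Y\to (f\circ p)^*\TT_F.\]
The underlying $2$-form of $\widetilde{\omega}$ is extracted from the same data and can be described as follows: after dualizing via the non-degeneracy of $\omega$, the Lagrangian conditions yield quasi-isomorphisms $\TT_f\simeq \LL_X[n-1]$ and $\TT_g\simeq \LL_Y[n-1]$. These combine with $\TT_F\simeq \LL_F[n]$ to produce a map of exact triangles from the above to the dual triangle
\[ (f\circ p)^*\LL_F[n-1]\to p^*\LL_X[n-1]\oplus q^*\LL_Y[n-1]\to \LL_Z[n-1].\]
The two outer vertical arrows are quasi-isomorphisms by the Lagrangian hypotheses and the non-degeneracy of $\omega$, so the five-lemma forces the induced middle arrow $\TT_Z\to \LL_Z[n-1]$ to be a quasi-isomorphism as well.

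The main obstacle is the compatibility between the closed form, constructed as a loop in the space of closed forms of degree $n$, and the underlying bilinear pairing appearing in the tangent-complex computation. Carrying this out rigorously requires tracing the Dold-Kan identification between $\pi_1\cA^{2,cl}(Z,n)$ and $\pi_0\cA^{2,cl}(Z,n-1)$ through the projection \eqref{formpr} and verifying that the resulting underlying $2$-form on $\TT_Z$ is indeed the one produced by the diagram chase above. Once this matching is established, the two halves of the argument combine to yield the $(n-1)$-shifted symplectic structure.
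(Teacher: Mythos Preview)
The paper does not supply its own proof of this statement: it is simply quoted as \cite[Theorem 2.9]{PTVV} and used as a black box. So there is nothing in the paper to compare your argument against.

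That said, your sketch is exactly the outline of the original PTVV proof: build a loop in $\cA^{2,cl}(Z,n)$ from the two Lagrangian homotopies and the identification $(f\circ p)^*\omega\simeq (g\circ q)^*\omega$, reinterpret it as a point of $\cA^{2,cl}(Z,n-1)$, and then verify non-degeneracy by comparing the fiber-product triangle for $\TT_Z$ with its dual and applying the five-lemma. The one genuine technical point you flag --- that the underlying $2$-form of the loop-construction really is the map $\TT_Z\to\LL_Z[n-1]$ produced by the triangle comparison --- is precisely where the work lies in \cite{PTVV}; your outline is correct but this compatibility must be checked rather than asserted.
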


In this paper, we consider the target spaces being $\dCx,\RR\Perf$ and $\Vect$ (or $\dVt^\gr$). For simplification, we write $Z(X)$ for $\Map(X,Z)$. Thus, $\dCx(X)$, $\RR\Perf(X)$ and $\RR\dVt(X)$ (or $\RR\dVt^\gr(X)$) denote for the derived moduli stacks of complexes, perfect complexes and vector bundles (or graded vector bundles) on $X$, respectively. 
Note that $\RR\dVt(X )$ (or $\RR\dVt^\gr(X)$) is derived only when $\dim(X)>1$.

Combining Theorems \ref{Lagstr} and
\ref{LagMap} we obtain the following result about moduli space of complexes on Calabi-Yau manifolds.

\begin{theorem}\label{Mainthm1}
Let $X$ be a smooth projective CY $d$-fold. Then the natural map $\dCx(X)\to \RR\Perf(X)\times \RR\dVt^\gr(X)$ has a 
Lagrangian structure. In particular, $\dCx(X)$ has a $(1-d)$-shifted Poisson structure. 
\end{theorem}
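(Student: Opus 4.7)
The plan is to obtain Theorem \ref{Mainthm1} as a direct consequence of Theorem \ref{Lagstr} by transporting the Lagrangian structure through the mapping-stack functor $\Map(X,-)$, using the integration-against-a-$d$-orientation machinery of Pantev--To\"en--Vaqui\'e--Vezzosi (Theorem \ref{int_omega}) together with its Lagrangian counterpart due to Calaque (Theorem \ref{LagMap}). No new geometric input is needed; the work is purely functorial.

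First I would equip $X$ with the structure of an $\cO$-compact derived stack with $d$-orientation $[X]$, which is available for any smooth projective Calabi--Yau $d$-fold upon choosing a trivialization $\omega_X\cong\cO_X$ (see \cite{PTVV}). Applying Theorem \ref{int_omega} to the $2$-shifted symplectic form $\omega=(\kappa,\alpha)$ on $\RR\Perf\times\dVt^\gr$ produces a $(2-d)$-shifted symplectic form $\int_{[X]}ev^*\omega$ on
\[
\Map(X,\RR\Perf\times\dVt^\gr)\simeq \RR\Perf(X)\times \RR\dVt^\gr(X).
\]
Next, starting from the Lagrangian structure on $f:\dCx\to\RR\Perf\times\dVt^\gr$ furnished by Theorem \ref{Lagstr}, I would invoke Theorem \ref{LagMap} to obtain a Lagrangian structure on the induced map
\[
\Map(X,f):\dCx(X)\longrightarrow \RR\Perf(X)\times \RR\dVt^\gr(X)
\]
with respect to $\int_{[X]}ev^*\omega$. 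This gives the first assertion of the theorem.

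The second assertion then follows by applying Lemma \ref{LagPoi} to this Lagrangian structure: the Lagrangian morphism to an $(2-d)$-shifted symplectic target automatically endows $\dCx(X)$ with a $(1-d)$-shifted Poisson structure via the formal completion $\wh{(\RR\Perf(X)\times \RR\dVt^\gr(X))}_{\dCx(X)}$.

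The principal subtlety is verificational rather than conceptual: Theorems \ref{int_omega} and \ref{LagMap} require that all three mapping stacks $\dCx(X)$, $\RR\Perf(X)$ and $\RR\dVt^\gr(X)$ be derived Artin stacks locally of finite presentation over $k$. For $\RR\Perf(X)$ this is \cite{TV07}; for $\dCx(X)$ one would recast the construction of Section \ref{sec_mod_cplx} relative to $X$---namely, interpret a family of bounded complexes of vector bundles on $X\times \mathrm{Spec}\,A$ as a flat family of coherent sheaves on the stack $X\times [\Spec k[\ep]/\ep^2/\GG_m]$ over $\mathrm{Spec}\,A$---and then apply Pridham's representability criterion \cite{Prid12} exactly as in Section \ref{sec_mod_cplx}, the finite-type conditions on $\mathrm{Ext}$-groups being guaranteed by the smoothness and properness of $X$. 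The case of $\RR\dVt^\gr(X)$ reduces similarly to the standard representability of the stack of vector bundles on $X$. Once these technical hypotheses are discharged, the theorem reduces to the pure formal transport of Lagrangian structures through the mapping functor.
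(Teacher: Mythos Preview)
Your proposal is correct and follows exactly the approach of the paper, which deduces Theorem \ref{Mainthm1} in one line by combining Theorem \ref{Lagstr} with Theorem \ref{LagMap} (and then Lemma \ref{LagPoi} for the Poisson conclusion). Your additional discussion of the Artin/finite-presentation hypotheses for the mapping stacks is a reasonable elaboration of a point the paper leaves implicit.
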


\subsection{Case $d=1$}
Let $k$ be a field of characteristic zero and $C$ be a smooth elliptic curve over $k$. The derived moduli space $\RR\Perf(C)$ classifies perfect complexes on $C$, i.e. isomorphism classes of objects in $\D^b(\coh~C)$. And $\dVt(C)$ classifies vector bundles on $C$. In this case, there is a fairly clear understanding about the $0$-shifted Poisson structure on $\dCx(C)$. In fact, many classical Poisson structures have appeared in algebraic geometry and integrable system are examples of this type. We will study them in detail in the next section.

The second main result of this section is the following.
\begin{theorem} \label{Lagpoint}
Let $\cF$ be a vector bundle on $C$ and $\cE^\bullet$ be a perfect complex on $C$. Denote $x_{\cE^\bullet}$ and $y_\cF$ for the stacky point in $\RR\Perf(C)$ and $\dVt(C)$, representing $\cE^\bullet$ and $\cF$ respectively
(see Def.\ \ref{stacky-pt-def}). Then the natural monomorphisms 
\[
j_x: x_{\cE^\bullet}\to \RR\Perf(C)~~~\text{and}~~~ j_y: y_{\cF}\to \dVt(C)
\] are Lagrangian with respect to the $1$-shifted symplectic structures on the target.
\end{theorem}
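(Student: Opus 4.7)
The strategy is to check separately the two defining conditions of a Lagrangian structure---existence of an isotropic null-homotopy $h$ and non-degeneracy of the induced map $\Theta_h$ (Lemma \ref{Thetah})---for each of $j_x$ and $j_y$. Since the two arguments are formally parallel (with $\int_C \kappa$ replaced by $\int_C \alpha$ and $R\Hom_C(\cE^\bullet,\cE^\bullet)$ replaced by $R\Hom_C(\cF,\cF)$), I focus on $j_x$.

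Write $\fg:=R\Hom_C(\cE^\bullet,\cE^\bullet)$. By Theorems \ref{MthmTV} and \ref{int_omega}, $\TT_{\RR\Perf(C),\cE^\bullet}\simeq \fg[1]$ and the $1$-shifted symplectic form---the integration of $Ch_2$ over $C$---identifies with the Serre-duality quasi-isomorphism $\fg[1]\simeq \fg^\vee$ (using $\omega_C\cong\cO_C$). Definition \ref{stacky-pt-def} gives $\TT_{x_{\cE^\bullet}}\simeq \tau^{\leq 0}\fg[1]$, with $\TT_{x_{\cE^\bullet}}\to j_x^*\TT_{\RR\Perf(C)}$ being the natural inclusion. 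Thus the relative tangent is $\TT_{j_x}\simeq \tau^{\geq 1}\fg$, while $\LL_{x_{\cE^\bullet}}\simeq (\tau^{\leq 0}\fg)^\vee[-1]$. Serre duality identifies $(\tau^{\leq 0}\fg)^\vee \simeq \tau^{\geq 1}\fg[1]$ (compare cohomologies: $H^j((\tau^{\leq 0}\fg)^\vee)=H^{-j}(\fg)^\vee \cong H^{1+j}(\fg)$ for $j\geq 0$), so $\LL_{x_{\cE^\bullet}}\simeq \tau^{\geq 1}\fg$. Tracing through the construction of Lemma \ref{Thetah}, $\Theta_h$ realises precisely this identification, which gives the non-degeneracy.

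For the isotropic step, the plan is to show that $\cA^{2,\mathrm{cl}}(x_{\cE^\bullet},1)$ is contractible, which supplies the required null-homotopy of $j_x^*\omega$ (canonically). Since $\LL_{x_{\cE^\bullet}}\simeq(\tau^{\leq 0}\fg)^\vee[-1]$ lives in strictly positive cohomological degree, the identity $\wedge^m(V[-1])=\Sym^m V[-m]$ implies that each summand $\wedge^{2+i}\LL_{x_{\cE^\bullet}}[1-i]$, $i\geq 0$, of the weight-$2$ piece of $NC^w(x_{\cE^\bullet})[-1]$ lies in strictly positive cohomological degree. Taking $R\Gamma$ on the (higher) residual gerbe $x_{\cE^\bullet}$ only adds non-negative shifts, since higher sheaf cohomology on a classifying stack of an affine group scheme in characteristic zero contributes in non-negative degrees. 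Hence the total complex of closed $2$-forms of degree $1$ sits in strictly positive cohomological degree, and its Dold-Kan denormalisation is contractible.

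The main obstacle is the isotropic step: one must handle the full weighted closed-form structure (not just its underlying $2$-form), and accommodate both the case when $\cE^\bullet$ is genuinely derived, so that $\tau^{\leq 0}\fg$ is not concentrated in a single degree and $x_{\cE^\bullet}$ is a higher stack, and the case when $\mathrm{Aut}(\cE^\bullet)$ is non-reductive. The degree count above survives both of these refinements because the relevant sheaves on $x_{\cE^\bullet}$ already sit in cohomological degree $0$ and derived invariants add only non-negative shifts, but carrying out the bookkeeping carefully is the substantive content of the argument.
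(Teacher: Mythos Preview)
Your proposal is correct and follows essentially the same route as the paper: first establish isotropy by a degree count showing that $\pi_0(\cA^{2,cl}(\text{stacky point},1))$ vanishes (the cotangent complex of the stacky point sits in cohomological degrees $\geq 1$, so each term $\wedge^{2+i}\LL[1-i]$ of the weighted negative cyclic complex does too), then deduce that $\Theta_h$ is a quasi-isomorphism from Serre duality on $C$. Your treatment is in fact a bit more careful than the paper's---you make explicit the $R\Gamma$ step on the residual gerbe and the possibility that $x_{\cE^\bullet}$ is a higher stack---but the strategy and the key inputs are identical.
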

\begin{proof}
We consider the second map first. The pull back of the tangent complex of $\dVt(C)$ to $y_\cF$, denoted by $\TT_{\cF}$ is quasi-isomorphic to
$\Gamma(C^{cos},\sEnd(\cF))[1]$, where $C^{cos}$ is a cosimplicial resolution of $C$. Here we choose the Cech model. Then $\Gamma(C^{cos},\sEnd(\cF))$ is the Cech complex that computes $\Ext^*(\cF,\cF)$.

The 2-shifted symplectic form $\alpha\in\cA^{2,cl}(\Vect,2)$ corresponds to a morphism of graded complexes
\[
\alpha: k(2)\to NC^\omega(\Vect).
\]
The composition
\[
\xymatrix{
\alpha_C: k(2)\ar[r]^\alpha & NC^\omega(\Vect)\ar[r]^{ev^*} &NC^\omega(C\times\dVt(C))\ar[r]^{\int_{[C]}} & NC^\omega(\dVt(C))[-1]
}
\] defines a $1$-shifted symplectic structure.

Let $G$ be a affine group scheme over $k$. We denote its Lie algebra by $\fa$.
We have
\[
NC^w(BG)[n-p](p)=(\prod_{i\geq 0}\Lambda^{p+i}\fa^\vee[n-p-2i],d+d_{dR}).
\]
When $n=1$ and $p=2$,  $\pi_0(\cA^{2,cl}(BG,1))$ vanishes for degree reason.
We take $G=Aut(\cF)$. Then $y_\cF$ is isomorphic with $BG$ as stacks, by \ref{vect=BG}. The vanishing of $\pi_0$ implies that there exists a homotopy $h: j_y^*\alpha_C\sim 0$, which shows that $j_y$ is isotropic. The tangent complex of $y_\cF$ is isomorphic to $\tau^{<0}\TT_\cF\simeq \Ext^0(\cF,\cF)[1]$. And the relative tangent complex $\TT_{j_y}$ is isomorphic to $\tau^{\geq 0}\TT_\cF\simeq\Ext^1(\cF,\cF)[-1]$. By Lemma \ref{Thetah}, we have a chain map
\[
\Theta_h: \TT_{j_y}\simeq \tau^{\geq 0}\TT_\cF\to (\tau^{<0}\TT_\cF)^\vee[1].
\]
The Serre duality on $C$ implies that $\Theta_h$ is a quasi-isomorphism.

Now we prove the first part. Denote the pull back of the tangent complex of $\RR\Perf$ via map $j_x$ by $\TT_{\cE^\bullet}$. It is quasi-isomorphic to the shift of Cech complex
$\Gamma(C^{cos},\sEnd^\bullet(\cE^\bullet))[1]$. Recall that the 2-shifted symplectic form $\kappa\in\cA^{2,cl}(\RR\Perf,2)$ is given by the weight 2 part of the categorical Chern character $Ch: \RR\Perf\to |NC|$. The composition
\[
\xymatrix{
\kappa_C: k(2)\ar[r]^\kappa & NC^\omega(\RR\Perf)\ar[r]^{ev^*} &NC^\omega(C\times\RR\Perf(C))\ar[r]^{\int_{[C]}} & NC^\omega(\RR\Perf(C))[-1]
}
\] defines a $1$-shifted symplectic structure.

The tangent complex of $x_{\cE^\bullet}$ is quasi-isomorphic to $\tau^{<0}\TT_{\cE^\bullet}$. Because
\[
NC^w(x_{\cE^\bullet})[n-p](p)=(\prod_{i\geq 0}\Lambda^{p+i}(\tau^{<0}\TT_{\cE^\bullet})^\vee[n-i],d+d_{dR}),
\]
for $n=1$ and $p=2$, $\pi_0(\cA^{2,cl}(x_{\cE^\bullet},1))$ vanishes for degree reason. So there exists a homotopy $l: j_x^*\kappa_C\sim 0$. The induced chain map 
\[
\Theta_l: \TT_{j_x}\simeq \tau^{\geq 0}\TT_{\cE^\bullet}\to (\tau^{<0}\TT_{\cE^\bullet})^\vee[1]
\] is a quasi-isomorphism again by Serre duality.
\end{proof}

\begin{remark}
We may compare the Lagrangian structure of Theorem \ref{Lagpoint} and that of Theorem \ref{Lagstr}. They are of very different nature. The Lagrangian structure in Theorem \ref{Lagstr} comes from an analogue of the parabolic structure on groups, while the Lagrangian structure in Theorem \ref{Lagpoint} comes from the geometry of elliptic curves.
\end{remark}

Combining Theorems \ref{Mainthm1} and \ref{Lagpoint} with Lemma \ref{Lagrangian-lem}
we deduce the following result giving more examples of $0$-shifted Poisson structures.
For a subset $S\subset \ZZ$ let us denote by $\dVt^S$ the derived stack of $S$-graded vector bundles, so that
$\dVt=\dVt^S\times \dVt^{\overline{S}}$, where $\overline{S}\subset \ZZ$ is the complement of $S$.
We denote by $p^S:\dCx\to \dVt^S$ the composition of $p$ with the natural projection.

\begin{corollary}\label{Casi}
For a subset $S\subset\ZZ$, let
$x_{\cE^\bullet}$, $y_{\cF^S}$ and $y_{\cF^{\overline{S}}}$ denote the stacky points in $\RR\Perf(C)$, $\dVt^S(C)$ and 
$\dVt^{\overline{S}}(C)$, corresponding to some objects $\cE^\bullet$, $\cF^S$ and $\cF^{\overline{S}}$, respectively. Denote the homotopy fiber products of the diagrams
\[
\xymatrix{
 & \dCx(C)\ar[d]^{(q,p^S)} &&  & \dCx(C)\ar[d]^{p^{\overline{S}}}\\
x_{\cE^\bullet}\times y_{\cF^S} \ar[r]^{} & \RR\Perf(C)\times \dVt^S(C) && y_{\cF^{\overline{S}}}\ar[r]^{} & \dVt^{\overline{S}}(C)
}
\] by $F_{x,y^S}$ and $F_{y^{\overline{S}}}$ respectively. Then the compositions
\[
F_{x,y^S}\to \dCx(C)\to \dVt^{\overline{S}}(C),~~\text{and}~~ F_{y^{\overline{S}}}\to\dCx(C)\to \RR\Perf(C)\times \dVt^S(C)
\] are Lagrangian. Hence $F_{x,y^S}$ and $F_{y^{\overline{S}}}$ have $0$-shifted Poisson structures.
\end{corollary}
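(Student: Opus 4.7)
The proof will be a direct application of Lemma \ref{Lagrangian-lem}, combining the Lagrangian structure produced in Theorem \ref{Mainthm1} with the Lagrangian structures on stacky points from Theorem \ref{Lagpoint}, followed by Lemma \ref{LagPoi} to pass from Lagrangian structures to shifted Poisson structures.

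First, I would invoke Theorem \ref{Mainthm1} in the case $d=1$: the target $\RR\Perf(C)\times\RR\dVt^{\gr}(C)$ carries a $1$-shifted symplectic structure, and the evaluation map $(q,p):\dCx(C)\to\RR\Perf(C)\times\RR\dVt^{\gr}(C)$ is equipped with a Lagrangian structure. Using $\RR\dVt^{\gr}(C)\simeq\dVt^S(C)\times\dVt^{\overline{S}}(C)$, I would rewrite this as a Lagrangian structure on
\[
(q,p^S,p^{\overline{S}}):\dCx(C)\longrightarrow \bigl(\RR\Perf(C)\times\dVt^S(C)\bigr)\times \dVt^{\overline{S}}(C),
\]
where the right-hand side is the product of two $1$-shifted symplectic stacks (the $1$-shifted symplectic form is the sum of the pieces coming from $\kappa_C$ and $\alpha_C$ in the proof of Theorem \ref{Lagpoint}).

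Next, I would observe that a product of Lagrangians is automatically Lagrangian in a product of symplectic stacks (the null homotopies and the quasi-isomorphism conditions add up componentwise). Thus, by Theorem \ref{Lagpoint}, the natural map
\[
x_{\cE^\bullet}\times y_{\cF^S}\longrightarrow \RR\Perf(C)\times\dVt^S(C)
\]
carries a Lagrangian structure, and likewise $y_{\cF^{\overline{S}}}\to \dVt^{\overline{S}}(C)$ is Lagrangian.

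Now I apply Lemma \ref{Lagrangian-lem} twice. For the first case, take $X=\dCx(C)$, $S_1=\RR\Perf(C)\times\dVt^S(C)$, $S_2=\dVt^{\overline{S}}(C)$, with $f_1=(q,p^S)$, $f_2=p^{\overline{S}}$, so that $(f_1,f_2)$ is the Lagrangian map above; let $g:x_{\cE^\bullet}\times y_{\cF^S}\to S_1$ be Lagrangian. The lemma then yields a Lagrangian structure on the composition $F_{x,y^S}\to \dCx(C)\to \dVt^{\overline{S}}(C)$. For the second case, swap the roles: take $S_1=\dVt^{\overline{S}}(C)$, $S_2=\RR\Perf(C)\times\dVt^S(C)$, $f_1=p^{\overline{S}}$, $f_2=(q,p^S)$, and $g:y_{\cF^{\overline{S}}}\to S_1$ Lagrangian; the lemma gives a Lagrangian structure on $F_{y^{\overline{S}}}\to \RR\Perf(C)\times\dVt^S(C)$. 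Finally, since each target is $1$-shifted symplectic, Lemma \ref{LagPoi} upgrades both Lagrangian structures to $0$-shifted Poisson structures on $F_{x,y^S}$ and $F_{y^{\overline{S}}}$.

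There is no real obstacle here — the corollary is essentially a bookkeeping exercise once Theorems \ref{Mainthm1} and \ref{Lagpoint} and Lemma \ref{Lagrangian-lem} are in place. The only point requiring a brief verification is the compatibility of the product $1$-shifted symplectic structure on $\bigl(\RR\Perf(C)\times\dVt^S(C)\bigr)\times\dVt^{\overline{S}}(C)$ with the reorganization $\RR\Perf(C)\times\RR\dVt^{\gr}(C)$ used in Theorem \ref{Mainthm1}; this is immediate because the $1$-shifted symplectic form on $\RR\dVt^{\gr}(C)$ splits as $\alpha_C^S\oplus\alpha_C^{\overline{S}}$ along the product decomposition, matching the additivity of the form under the rearrangement.
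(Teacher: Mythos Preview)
Your proof is correct and follows exactly the approach the paper intends: the corollary is stated immediately after the sentence ``Combining Theorems \ref{Mainthm1} and \ref{Lagpoint} with Lemma \ref{Lagrangian-lem} we deduce the following result,'' and your write-up simply spells out this combination in detail, including the (implicit in the paper) observations that products of Lagrangians are Lagrangian and that the $1$-shifted symplectic form on $\RR\dVt^{\gr}(C)$ splits along the $S\sqcup\overline{S}$ decomposition.
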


\begin{corollary}\label{leaves}
The homotopy fiber product $F_{x,y}$ of the diagram
\[
\xymatrix{
 && \dCx(C)\ar[d]^{f}\\
(x_{\cE^\bullet},y_{\cF^\bullet})\ar[rr]_{(j_x,j_y)} && \RR\Perf(C)\times\dVt^\gr(C),
}
\] 
has a $0$-shifted symplectic structure. 
\end{corollary}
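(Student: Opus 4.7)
\begin{proof*}
The plan is to realize $F_{x,y}$ as a homotopy intersection of two Lagrangians inside a $1$-shifted symplectic stack and then invoke Theorem~\ref{Lagintersection}.

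First I would identify the ambient symplectic target. By Theorems~\ref{symperf} and \ref{int_omega} applied to the elliptic curve $C$ (which is $\cO$-compact and $1$-oriented via the trivialization $\omega_C\cong\cO_C$), the product $\RR\Perf(C)\times\dVt^\gr(C)$ carries a $1$-shifted symplectic structure, obtained as the sum of the integrated Chern character form $\kappa_C$ and the analogous form for each component of $\dVt^\gr(C)$. Next, I would check that $f:\dCx(C)\to \RR\Perf(C)\times\dVt^\gr(C)$ is equipped with a Lagrangian structure: this is exactly the content of Theorem~\ref{Mainthm1} in the case $d=1$ (obtained by applying Theorem~\ref{LagMap} to the Lagrangian of Theorem~\ref{Lagstr}).

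The second ingredient is that $(j_x,j_y):(x_{\cE^\bullet},y_{\cF^\bullet})\to \RR\Perf(C)\times \dVt^\gr(C)$ also has a Lagrangian structure. The factor $j_x$ is Lagrangian by the first half of Theorem~\ref{Lagpoint}. For the second factor, I would use the product decomposition $\dVt^\gr(C)\simeq \prod_i\dVt(C)$, under which $y_{\cF^\bullet}=\prod_i y_{\cF^i}$ and the symplectic form on $\dVt^\gr(C)$ is the weighted sum of the symplectic forms on the factors $\dVt(C)$; the second half of Theorem~\ref{Lagpoint} then equips each $y_{\cF^i}\to \dVt(C)$ with a Lagrangian structure, and the product of Lagrangians in a product of symplectic stacks is again Lagrangian (this is formal from the definition). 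Finally, a product of Lagrangians in $\RR\Perf(C)$ and in $\dVt^\gr(C)$ gives a Lagrangian in the product, which is the desired structure on $(j_x,j_y)$.

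With both $f$ and $(j_x,j_y)$ Lagrangian in the $1$-shifted symplectic stack $\RR\Perf(C)\times\dVt^\gr(C)$, Theorem~\ref{Lagintersection} supplies a canonical $(1-1)=0$-shifted symplectic structure on the homotopy fiber product $F_{x,y}$, completing the proof. The main conceptual point I expect to have to handle carefully is the comparison between the product symplectic form on $\dVt^\gr(C)$ obtained by integration over $C$ and the direct sum of the per-degree symplectic forms; this is essentially a bookkeeping check that integration along $[C]$ commutes with the finite product decomposition, but it is what makes the product of per-degree Lagrangians assemble into a Lagrangian for the graded version. No new non-degeneracy calculation is needed, since non-degeneracy of $\Theta_h$ is built into the hypotheses of the two Lagrangian inputs and is preserved by Theorem~\ref{Lagintersection}.
\end{proof*}
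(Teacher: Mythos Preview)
Your proposal is correct and follows essentially the same approach as the paper, which simply cites Theorem~\ref{Lagpoint} and Theorem~\ref{Lagintersection} (with the Lagrangian structure on $f$ from Theorem~\ref{Mainthm1} taken as given). You have usefully spelled out the one detail the paper leaves implicit, namely that Theorem~\ref{Lagpoint} is stated for a single vector bundle while $y_{\cF^\bullet}$ is graded, so the Lagrangian structure on $j_y$ must be assembled degree-by-degree via the product decomposition of $\dVt^\gr(C)$.
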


\begin{proof}
This follows from Theorem \ref{Lagpoint} and Theorem \ref{Lagintersection}.
\end{proof}

As an analogue of the terminology in classical Poisson geometry, we call $F_{x,y}$ the \emph{symplectic leaves} of the $0$-shifted Poisson structure on $\dCx(C)$. Note that in algebraic setting, the existence of algebraic symplectic leaves is in general not known even for smooth Poisson varieties. Corollary \ref{leaves} provides a potential way to prove such existence.

\begin{remark}
To give a link to the classical Poisson geometry, we may assume all the stacks in the previous two corollaries are smooth schemes. Then the Corollary \ref{Casi} claims that $p$ and $q$ are Casimir maps. If we further assume that the intersection of the fibers of $p$ and $q$ are smooth schemes, then Corollary
\ref{leaves} claims that the intersections of the fibers are symplectic leaves. We will see in the next section that Corollary \ref{leaves} is a powerful tool to compute the symplectic leaves of many classical Poisson structures arising from elliptic curves. 
\end{remark}


\section{Example one: moduli spaces of torsion free sheaves on $\PP^2$ and their deformations}
As the first application, we show that by considering 
$0$-shifted Poisson structure constructed in the previous section 
in the case of certain $3$-term complexes on elliptic curves one recovers Poisson structures on the 
moduli spaces of semistable sheaves on $\PP^2$ and their deformations, constructed
by Nevins and Stafford \cite{NS06}. In the case of Hilbert schemes, we use Corollary \ref{leaves} to study their symplectic leaves. In this section, $k$ is taken to be the field of complex number $\CC$.

\subsection{$0$-shifted Poisson structures on moduli spaces of torsion free sheaves on $\PP^2$ and their deformation}
Let us recall the monad construction for torsion free sheaves on complex projective plane following \cite[Ch.\ 2]{Nak99}.

Let us denote $c_1(\cO_{\PP^2}(1))$ by $H$. Recall that a 
torsion free sheaf $E$ on $\PP^2$ is called \emph{normalized} if $-\rk(E)<c_1(E)\cdot H\leq 0$. Note that 
for arbitrary $E$ there exists a unique $n\in\Z$ such that $E(n)$ is normalized. 
The following result is well known (see \cite[Lemma 2.4]{Nak99}).

\begin{lemma}\label{normalized-coh-lem}
Let $E$ be a normalized semistable torsion free sheaf on $\PP^2$, then
\begin{align*}
\begin{cases}
H^q(\PP^2,E(-p))=0 & \text{for} ~~ p=1,2, q=0,2\\
H^q(\PP^2,E(-1)\otimes \cQ^\vee)=0 & \text{for} ~~  q=0,2.
\end{cases}
\end{align*}
\end{lemma}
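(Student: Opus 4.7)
The plan is to deduce each of the six vanishings from slope semistability combined with elementary numerical bounds, using Serre duality to handle the $H^2$ cases. The key numerical input is that the normalization condition $-\rk(E)<c_1(E)\cdot H\leq 0$ translates to $-1<\mu(E)\leq 0$, with slope taken with respect to $H$.

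First, I would reduce the statements involving $\cQ^\vee$ to twists of $\Omega^1_{\PP^2}$: dualizing the tautological sequence $0\to\cO(-1)\to\cO^{\oplus 3}\to\cQ\to 0$ gives the Euler sequence, so $\cQ^\vee\cong \Omega^1_{\PP^2}(1)$ and $E(-1)\otimes\cQ^\vee\cong E\otimes\Omega^1_{\PP^2}$. Since $\Omega^1_{\PP^2}$ is slope-stable with $\mu=-3/2$, and tensor products of slope-semistable torsion-free sheaves remain semistable in characteristic zero when one factor is locally free, $E\otimes\Omega^1_{\PP^2}$ is semistable of slope $\mu(E)-3/2\leq -3/2$.

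For the $H^0$ vanishings I would use the standard observation that a nonzero global section of a torsion-free sheaf $F$ yields an injection $\cO_{\PP^2}\hookrightarrow F$, forcing $\mu(F)\geq 0$ by semistability. Applied to $F=E(-p)$ (slope $\leq -p<0$) and to $F=E\otimes\Omega^1_{\PP^2}$ (slope $\leq -3/2<0$), this produces the required contradictions. For the $H^2$ vanishings, Serre duality $H^2(\PP^2,F)\cong \Hom(F,\cO(-3))^\vee$ reduces matters to showing $\Hom(E,\cO(p-3))=0$ for $p=1,2$ and $\Hom(E,\Omega^1_{\PP^2})=0$. A nonzero such map has image a rank-one torsion-free quotient of $E$; semistability bounds its slope below by $\mu(E)>-1$, while its embedding into a line bundle of degree $p-3\leq -1$, respectively into the semistable bundle $\Omega^1_{\PP^2}$ of slope $-3/2$, bounds its slope above by $-1$, respectively $-3/2$. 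Both give contradictions.

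The only ingredient I would need beyond elementary slope bookkeeping is the semistability of $E\otimes \Omega^1_{\PP^2}$, which rests on the characteristic-zero theorem that the tensor product of two slope-semistable sheaves (one locally free) is again semistable, together with the classical stability of $\Omega^1_{\PP^2}$. I expect this tensor-product fact to be the only point where external machinery is invoked, and hence the main (mild) obstacle of the argument.
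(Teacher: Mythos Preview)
The paper does not give its own proof of this lemma: it simply records the statement as well known and cites \cite[Lemma~2.4]{Nak99}. Your argument is correct and is exactly the kind of standard slope-inequality proof that reference contains, so there is nothing to compare at the level of strategy.

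Two small comments on the write-up. First, in the $H^2$ step for $E\otimes\Omega^1_{\PP^2}$ you say the image of a nonzero map $E\to\Omega^1_{\PP^2}$ is a \emph{rank-one} quotient of $E$; this need not be true, since $\Omega^1_{\PP^2}$ has rank $2$. The fix is that the rank is irrelevant: the image is simultaneously a quotient of the semistable sheaf $E$ (so slope $\ge\mu(E)>-1$) and a subsheaf of the stable bundle $\Omega^1_{\PP^2}$ (so slope $\le -3/2$), which is already a contradiction. Second, you can avoid invoking the tensor-product-of-semistables theorem entirely: rewriting $H^0(E\otimes\Omega^1_{\PP^2})=\Hom(T_{\PP^2},E)$ and using that $T_{\PP^2}$ is stable of slope $3/2>0\ge\mu(E)$ gives the $H^0$ vanishing directly from the elementary $\Hom$-vanishing between semistable sheaves of the wrong slope ordering. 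With those tweaks the proof becomes self-contained and uses nothing beyond basic slope bookkeeping.
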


The above lemma together with the Beilinson spectral sequence leads to the following result (see \cite[Section 2.1]{Nak99}).
Let $\cQ$ denote the twisted tangent bundle $T_{\PP^2}(-1)$. 

\begin{prop}\label{monaddes}
Let $E$ be a normalized semistable torsion free sheaf on $\PP^2$. 
There exists a complex of sheaves
\[
\xymatrix{
\cO_{\PP^2}(-1)\otimes H^1(\PP^2,E(-2))\ar[r]^a &\cO_{\PP^2}\otimes H^1(\PP^2,E(-1)\otimes \cQ^\vee)\ar[r]^b & \cO_{\PP^2}(1)\otimes H^1(\PP^2, E(-1)),
}
\] such that $a$ is injective, $b$ is surjective and $\ker ~b/\im ~a\cong E$.
\end{prop}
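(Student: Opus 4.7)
The plan is to apply the Beilinson spectral sequence to the twist $E(-1)$ and exploit the vanishing recorded in Lemma \ref{normalized-coh-lem} to collapse the $E_1$-page into a single three-term complex. I start from Beilinson's resolution of the diagonal on $\PP^2\times \PP^2$:
$$0\to \cO(-2)\bt \Omega^2_{\PP^2}(2)\to \cO(-1)\bt \Omega^1_{\PP^2}(1)\to \cO\bt \cO\to \cO_\Delta\to 0,$$
noting the standard identifications $\Omega^2_{\PP^2}(2)\cong \cO(-1)$ and $\Omega^1_{\PP^2}(1)\cong \cQ^\vee$ (the dual of the Euler sequence for $\cQ=T_{\PP^2}(-1)$). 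Pulling back $E(-1)$ along the second projection, tensoring with this resolution, and pushing forward along the first projection produces a hypercohomology spectral sequence with
$$E_1^{p,q}=H^q(\PP^2,\ E(-1)\otimes \Omega^{-p}(-p))\otimes \cO(p), \qquad p\in\{-2,-1,0\},$$
converging to $E(-1)$ concentrated at total degree $0$.

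The crucial step is then to feed in Lemma \ref{normalized-coh-lem}. The three columns of the $E_1$-page read, after using $\Omega^2(2)\cong\cO(-1)$,
$$H^q(\PP^2, E(-2))\otimes\cO(-2),\quad H^q(\PP^2, E(-1)\otimes\cQ^\vee)\otimes\cO(-1),\quad H^q(\PP^2, E(-1))\otimes\cO,$$
and the lemma kills every $q\in\{0,2\}$ entry. So only the row $q=1$ survives, the spectral sequence degenerates at $E_2$, and what remains is a single complex of length three sitting in total degree zero.

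Convergence of the spectral sequence to $E(-1)$ then forces the left map to be injective, the right map to be surjective, and the middle cohomology to be $E(-1)$. Twisting by $\cO(1)$ produces exactly the monad in the statement, with the maps $a$ and $b$ of the desired shape and $\ker b/\im a\cong E$.

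There is no real obstacle here: the argument is a textbook application of Beilinson. The only delicate point is verifying that all the required $H^0$ and $H^2$ vanishings in Lemma \ref{normalized-coh-lem} are exactly what is needed to kill every term outside the surviving row, and that the twist by $\cO(1)$ correctly re-indexes the complex so that it lands in the stated degrees $(-1,0,1)$. Both are immediate once the $E_1$-page is written out.
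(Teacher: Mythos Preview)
Your proposal is correct and follows exactly the approach the paper indicates: the paper does not spell out a proof but simply says the result follows from Lemma \ref{normalized-coh-lem} together with the Beilinson spectral sequence, citing \cite[section 2.1]{Nak99}. Your write-up fills in precisely those details, with the correct identifications $\Omega^2(2)\cong\cO(-1)$, $\Omega^1(1)\cong\cQ^\vee$, the correct $E_1$-page, and the final twist by $\cO(1)$.
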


More generally, for finite dimensional vector spaces $(V_i)$, $i=-1,0,1$, we call a complex of the form
\begin{equation}\label{KP2}
\bK:~~\xymatrix{
\cO_{\PP^2}(-1)\otimes V_{-1}\ar[r]^a &\cO_{\PP^2}\otimes V_0\ar[r]^b & \cO_{\PP^2}(1)\otimes V_1
}
\end{equation} 
a \emph{Kronecker complex}. 
Two Kronecker complexes are called isomorphic if they are isomorphic as complexes of sheaves (rather than objects in the derived category). A Kronecker complex is called a \emph{monad} if $a$ is injective, $b$ is surjective and the middle cohomology sheaf  is torsion free.

Let us denote the moduli stack of semistable torsion free sheaves on $\PP^2$ of rank $r$, degree $-r<d\leq0$ and $c_2=-n$ by $\cM^{ss}(r,d,n)$.
By Proposition \ref{monaddes}, $\cM^{ss}(r,d,n)$ is an open substack of the moduli stack of Kronecker complexes on $\PP^2$. 
By Lemma \ref{normalized-coh-lem}, the dimensions of the vector spaces $V_i$ are determined by $r,d,n$ via the Riemann-Roch formula. For example, if we set $d=0$ then the Kronecker complex corresponding to $\cF\in\cM^{ss}(r,0,n)$ has the 
spaces $V_{-1},V_0,V_1$ of dimensions $n,2n+r,n$, respectively.

\begin{theorem} (Proposition 9.2 \cite{NS06}) \label{equiv-stack}
Let $C$ be a plane cubic. A Kronecker complex on $C$ is defined to be a complex of the form
\begin{equation}\label{Kcplx}
\bK_C: \xymatrix{
\cL^\vee\otimes V_{-1}\ar[r] &\cO_C\otimes V_0\ar[r] & \cL\otimes V_1}
\end{equation}
for some line bundle $\cL$ of degree 3 on $C$. Let $\bK$ be a Kronecker complex on $\PP^2$ of the form \eqref{KP2} and define $\cL=\cO(1)|_C$.
The restriction functor induces an equivalence between the moduli functor of Kronecker complexes on $\PP^2$ and the moduli functor of Kronecker complexes on $C$.
\end{theorem}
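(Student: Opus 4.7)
The plan is to verify that restriction to $C$ defines an equivalence at the level of groupoids of $A$-families, for every commutative $k$-algebra $A$ (so that it then induces an equivalence on moduli functors). The key observation is that the whole structure of a Kronecker complex — the underlying sheaves, the differentials $a$ and $b$, the condition $b\circ a=0$, and the morphisms between such complexes — is encoded by linear-algebraic data living in the finitely many cohomology groups $H^0(\PP^2,\cO(n))$ for $n\in\{0,1,2\}$ (and their restrictions to $C$), together with the vector spaces $V_{-1},V_0,V_1$. So the proof reduces to checking that these cohomology groups, and their relative analogues over $\Spec A$, are preserved by restriction.

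First I would record the input from the adjunction sequence for the cubic $C\subset \PP^2$: tensoring $0\to \cO_{\PP^2}(-3)\to \cO_{\PP^2}\to \cO_C\to 0$ with $\cO_{\PP^2}(n)$ and taking cohomology gives that $H^0(\PP^2,\cO(n))\to H^0(C,\cL^n)$ is an isomorphism and $H^1(C,\cL^n)=0$ for $n=1,2$, because $H^0(\PP^2,\cO(n-3))=H^1(\PP^2,\cO(n-3))=0$ in that range. The relative version over $\Spec A$ follows from cohomology and base change on $\PP^2_A$ since $R^{>0}\pi_*\cO_{\PP^2_A}(n-3)=0$ for $n=1,2$ and these sheaves and their restrictions to $C_A$ are cohomologically flat.

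Next I would use these isomorphisms to show fullness, faithfulness, and essential surjectivity in families. A Kronecker complex over $\Spec A$, on either $\PP^2$ or $C$, is determined by the projective $A$-modules $V_{-1},V_0,V_1$ and by $a\in V_{-1}^\vee\otimes V_0\otimes H^0(\cO(1))$, $b\in V_0^\vee\otimes V_1\otimes H^0(\cO(1))$ subject to the equation $b\cdot a=0$ in $V_{-1}^\vee\otimes V_1\otimes H^0(\cO(2))$; the $n=1,2$ isomorphisms match these data under restriction, so restriction is bijective on objects up to isomorphism. A chain morphism $\bK\to \bK'$ is, since $\Hom(\cO(n),\cO(n))=k$ on both $\PP^2_A$ and $C_A$ (again by the same vanishing), nothing but a triple of $A$-linear maps $V_i\to V'_i$ satisfying commutation equations, and the equations themselves live in the same cohomology groups identified above. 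This shows the restriction functor is fully faithful in families and essentially surjective.

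The main obstacle is really only bookkeeping: one must make sure that the equivalence is natural in $A$, so that it upgrades from an equivalence of groupoids at each $A$-point to an equivalence of moduli functors (equivalently, of the associated stacks). For this I would spell out that all of the cohomology identifications above are functorial in $A$ and compatible with base change along $A\to B$, which is automatic from the cohomology-and-base-change input for $\cO(n)$ on $\PP^2_A$. Once this is in place, the theorem follows.
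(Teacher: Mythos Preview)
Your proposal is correct and follows exactly the approach the paper itself indicates: the paper simply observes (as the entire justification) that $\Hom_{\PP^2}(\cO,\cO(i))\cong\Hom_C(\cO_C,\cO_C(i))$ for $i=0,1,2$, which is precisely the cohomological input you derive from the structure sequence of $C\subset\PP^2$ and then unpack into full faithfulness and essential surjectivity in $A$-families. Your write-up is more detailed than the paper's, but the idea is identical.
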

The above theorem follows from the observation that 
\[
\Hom_{\PP^2}(\cO,\cO(i))\cong\Hom_C(\cO_C,\cO_C(i)) \hs \text{for} \hs i=0,1,2.
\]
In particular, if $\bK$ is a monad then $\bK_C$ has only middle cohomology (see Corollary 9.2 \cite{NS06}).

We can apply Theorem \ref{Mainthm1} to the case when $X$ is a smooth plane cubic $C$. Given a degree 3 line bundle $\cL$ on $C$ and finite dimensional vector spaces $V_{-1},V_0,V_1$, let $y$ be the stacky point in $\dVt^\gr(C)$ corresponding to $(\cL^\vee\otimes V_{-1},\cO_C\otimes V_0,\cL\otimes V_1)$.
Then the fiber $F_y$ of 
\[
p:\dCx(C)\to \dVt^\gr(C)
\]
at $y$ is precisely the moduli stack of Kronecker complex on $C$ with the data $(\cL,V_{-1},V_0,V_1)$ fixed. Then by Corollary \ref{Casi}, we obtain
\begin{corollary}\label{PoiP2}
The moduli stack of semistable sheaves 
$\cM^{ss}(r,d,n)$ has a $0$-shifted Poisson structure.
\end{corollary}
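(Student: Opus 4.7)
The plan is to realize $\cM^{ss}(r,d,n)$ as an open substack of a fiber of the projection $p\colon\dCx(C)\to \dVt^\gr(C)$ for a smooth plane cubic $C\subset\PP^2$, and then invoke the results of the previous section directly.

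First, choose a smooth plane cubic $C\subset\PP^2$. Such a curve is a Calabi--Yau $1$-fold, so Theorem \ref{Mainthm1} with $d=1$ produces a Lagrangian structure on
\[
f_C\colon\dCx(C)\longrightarrow \RR\Perf(C)\times \RR\dVt^\gr(C),
\]
and in particular a $0$-shifted Poisson structure on $\dCx(C)$. Now set $\cL:=\cO_{\PP^2}(1)|_C$ and take finite dimensional vector spaces $V_{-1},V_0,V_1$ whose dimensions are determined by $(r,d,n)$ via Riemann--Roch applied to the vanishings of Lemma \ref{normalized-coh-lem} (exactly as in Proposition \ref{monaddes}). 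Let $y\in\dVt^\gr(C)$ be the stacky point representing the graded vector bundle concentrated in degrees $-1,0,1$ with components $\cL^\vee\otimes V_{-1}$, $\cO_C\otimes V_0$, $\cL\otimes V_1$.

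Next, apply Corollary \ref{Casi} with $S=\emptyset$ (so $\overline{S}=\ZZ$) to the stacky point $y$. This gives a $0$-shifted Poisson structure on the homotopy fiber $F_y$ of $p$ over $y$. By construction, $F_y$ is precisely the derived moduli stack of Kronecker complexes on $C$ of the form \eqref{Kcplx} with the prescribed line bundle $\cL$ and graded components $V_\bullet$. Invoking Theorem \ref{equiv-stack}, the restriction functor identifies $F_y$ with the corresponding derived moduli stack of Kronecker complexes of the form \eqref{KP2} on $\PP^2$. Finally, by Proposition \ref{monaddes}, $\cM^{ss}(r,d,n)$ sits as an open substack of this moduli of Kronecker complexes on $\PP^2$, cut out by the open conditions that $a$ be injective, $b$ surjective, the middle cohomology torsion-free, and the middle cohomology semistable. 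Restricting the $0$-shifted Poisson structure along this open embedding yields the claimed structure on $\cM^{ss}(r,d,n)$.

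The one point that requires real care, and which I expect to be the main technical obstacle, is that Theorem \ref{equiv-stack} as quoted is an equivalence of (underived) moduli functors of Kronecker complexes; to transfer a \emph{derived} Poisson structure we need this equivalence to lift to the derived moduli stacks. This should follow from the same $\Hom$-identifications $\Hom_{\PP^2}(\cO,\cO(i))\cong\Hom_C(\cO_C,\cO_C(i))$ for $i=0,1,2$ together with the vanishing of the relevant higher $\Ext$ groups on both sides, which ensure the tangent-obstruction theories of the two moduli problems match; but it is this compatibility, rather than any step involving shifted symplectic geometry, that requires verification. All remaining steps are direct invocations of Theorem \ref{Mainthm1} and Corollary \ref{Casi}.
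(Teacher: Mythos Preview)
Your proof is correct and follows essentially the same route as the paper: identify $\cM^{ss}(r,d,n)$, via Proposition \ref{monaddes} and Theorem \ref{equiv-stack}, with an open substack of the fiber $F_y$ of $p\colon\dCx(C)\to\dVt^\gr(C)$ over the stacky point $y=(\cL^\vee\otimes V_{-1},\cO_C\otimes V_0,\cL\otimes V_1)$, and then invoke Corollary \ref{Casi}. The technical concern you raise about lifting the equivalence of Theorem \ref{equiv-stack} to the derived level is legitimate but is not addressed in the paper either; it is implicitly treated as immediate from the $\Hom$-identifications.
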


Now let $\eta$ be a translation of $C$, and let us set $\cL^\eta:=(\eta^{-1})^*\cL$.
For fixed integers $r> 0,d,n\geq 0$ and vector spaces $V_{-1}, V_0,V_1$ of dimensions determined by $r,d,n$, let us set
\[
y_\eta:=(\cL^\vee\otimes V_{-1},\cO_C\otimes V_0,\cL^{\eta}\otimes V_1)\in \dVt^\gr(C)
\] 
 Then the fiber $F_{y_\eta}$  of the map $p:\dCx(C)\to\dVt^\gr(C)$
parameterizes complexes on $C$ of the form
\begin{equation} \label{Keta}
\bK_C^\eta: \xymatrix{
\cL^\vee\otimes V_{-1}\ar[r]^a &\cO_C\otimes V_0\ar[r]^b & \cL^\eta\otimes V_1}
\end{equation}
Therefore, the moduli stack of complexes of the form $\bK^\eta_C$ 
carries a $0$-shifted Poisson structure from Corollary \ref{Casi}.                                         

As Nevins and Stafford show, the complexes $\bK^\eta_C$ are related to the monad description of semistable sheaves on the noncommutative projective planes. 
Let $C$ be a smooth cubic in $\PP^2$ and $\cL=\cO(1)|_C$. Fix an automorphism $\eta\in Aut(C)$ given by translation under the group law.
Denote the graph of $\eta$ by $\Gamma_\eta\subset C\times C$. If $V=H^0(C,\cL)$, there is a $3$-dimensional subspace 
\[
\cR=H^0(C\times C, (\cL\bt\cL)(-\Gamma_\eta))\subset V\ot V.
\]
\begin{definition}\label{Skl3}
The \emph{3-dimensional Sklyanin algebra} is the algebra
\[
S_\eta=Skl(C,\cL,\eta)=T(V)/(\cR),
 \]  where $T(V)$ denotes for the tensor algebra of $V$ and $(\cR)$ denotes the two-sided ideal generated by $\cR$.
\end{definition}
The above definition applies even when $C$ is singular, though we will focus on the cases when $C$ is smooth. By an abuse of notation, we use the same symbol $\eta$ to represent the point in $C$ that defines the translation. As $\eta$ varies $S_\eta$ form a flat family of noncommutative algebras over the base $\cB=C$.  

For fixed $\eta$ the abelian category $\qgr (S_\eta)$ is defined to be the quotient category of the category of finitely generated (right)-$S_\eta$-modules by the subcategory of torsion modules. When $\eta$ is equal to the neutral element $o\in C$, $S_o$ is the graded polynomial algebra of three variables, and $\qgr(S_o)\simeq \coh(\PP^2)$. We will denote by $\PP^2_\eta$  the corresponding noncommutative projective plane such that
\[
\qgr(S_\eta)\simeq \coh(\PP^2_\eta).
\]
Such a $\PP^2_\eta$ is called a \emph{elliptic quantum projective plane}. 
A key fact is that $\coh(C)\simeq \qgr(S_\eta/gS_\eta)$ for a central element $g\in S_\eta$ of degree 3.
This allows us to define the restriction functor $(-)|_C$ as the right adjoint to the inclusion functor.

In \cite{NS06}, Nevins and Stafford extended the monad construction of moduli space of torsion free sheaves on $\PP^2$ to the elliptic quantum projective planes $\PP^2_\eta$. They prove the following.
\begin{theorem}(Theorem 1.6, 1.7, 1.9 \cite{NS06}) \label{NSthm1}
Let $\PP^2_\eta$ be an elliptic quantum projective plane and fix $r\geq 1$, $d\in\ZZ$ and $n\geq 0$.
\begin{enumerate}
\item[(1)] There is a (relative) projective coarse moduli space $M^{ss}_C(r,d,n)\to C$, such that the fiber over $\eta\in C$, $M^{ss}_\eta(r,d,n)$, parameterizes semistable torsion free sheaves in $\qgr(S_\eta)$ of rank $r$, degree $d$ and $c_2=-n$.
\item[(2)] There is a quasi-projective $C$-subscheme $M^{s}_C(r,d,n)\subset M^{ss}_C(r,d,n)\to C$, smooth over $C$,
such that $M^{s}_\eta(r,d,n)$ parameterizes stable sheaves.
\item[(3)] The moduli space $M^{s}_\eta(r,d,n)$ admits a natural Poisson structure.
\end{enumerate}
\end{theorem}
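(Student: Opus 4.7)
The plan is to reduce parts (1) and (2) to a relative GIT construction on the moduli of Kronecker complexes, and to derive (3) as a more or less direct corollary of the machinery in Section~3.

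For (1), I would first upgrade Proposition~\ref{monaddes} to the noncommutative setting: every semistable torsion-free $\cF\in\qgr(S_\eta)$ of numerical type $(r,d,n)$ admits a monadic presentation of the form \eqref{Keta}, with $\dim V_i$ determined by Riemann-Roch. This requires noncommutative analogues of the vanishing results in Lemma~\ref{normalized-coh-lem} together with a noncommutative Beilinson resolution on $\PP^2_\eta$. As $\eta$ varies over $C$, the space of such pairs $(a,b)$ forms a scheme over $C$ acted on by $\prod_i GL(V_i)$ modulo the diagonal scalars, and a relative GIT quotient with respect to the natural polarization produces the projective $C$-scheme $M^{ss}_C(r,d,n)$.

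For (2), one restricts to the open stable locus on which the group action has finite stabilizers; the quotient $M^s_C(r,d,n)$ is then a smooth quasi-projective $C$-scheme, the smoothness coming from the vanishing of $\Ext^2(\cF,\cF)$ for stable $\cF\in\qgr(S_\eta)$, which in turn follows from Serre duality on $\PP^2_\eta$ combined with stability. A key point is that these Ext-vanishings are locally constant in $\eta$, so the dimension of the universal monad family is constant along $C$.

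For (3), this is where the derived-geometric framework of this paper becomes most natural. Let $y_\eta\in\dVt^\gr(C)$ be the stacky point of $(\cL^\vee\otimes V_{-1},\cO_C\otimes V_0,\cL^\eta\otimes V_1)$. By Corollary~\ref{Casi} the homotopy fiber $F_{y_\eta}$ of $p:\dCx(C)\to\dVt^\gr(C)$ carries a $0$-shifted Poisson structure, and by Theorem~\ref{equiv-stack} its underlying classical stack is the moduli of Kronecker complexes on $\PP^2_\eta$. Restricting to the open locus of monads with stable middle cohomology gives a smooth atlas for $M^s_\eta(r,d,n)$, and the classical shadow of the $0$-shifted Poisson structure descends to the desired Poisson bivector. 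The main obstacle is then identifying this bivector with the one constructed in \cite{NS06}: they define theirs directly from the Kodaira-Spencer class of the family $\PP^2_\eta\to C$, whereas ours arises from the Atiyah-class bivector $\Pi_h$ of Theorem~\ref{Lagstr}; comparing the two requires unpacking both at the level of $\Ext^1(\cF,\cF)$ via the monad description, which I expect to be the technically heaviest step.
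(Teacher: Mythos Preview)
This theorem is not proved in the paper: it is quoted from \cite{NS06}, with only a one-paragraph gloss afterward (``The key step \ldots is to show that the moduli functor of torsion free sheaves on $\PP^2_\eta$ \ldots is equivalent to the moduli functor of Kronecker complexes \eqref{Keta}. Then the coarse moduli space \ldots can be constructed using geometric invariant theory''). Your outline for (1) and (2) matches this gloss and the Nevins--Stafford argument: monad description via a noncommutative Beilinson resolution, then relative GIT on the space of Kronecker data, with smoothness coming from $\Ext^2$-vanishing.

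Your treatment of (3), however, is not the Nevins--Stafford proof. They construct the Poisson bivector directly, by writing down an explicit chain map $\psi$ on the complex $\sigma^{\ge 0}\cC^\bullet$ associated to the monad (the paper cites this as \cite[Lem.~9.6]{NS06} inside the proof of Theorem~\ref{Thmapp1}); this is a hands-on linear-algebra construction on $\Ext^1(\cF,\cF)$, not the Kodaira--Spencer description you attribute to them. What you propose instead---pulling the $0$-shifted Poisson structure on $F_{y_\eta}$ from Corollary~\ref{Casi} and then taking its classical shadow---is exactly the argument the paper carries out \emph{afterward}, in Theorem~\ref{Thmapp1} and Proposition~\ref{Poicoarse}, in order to \emph{recover} the Nevins--Stafford Poisson structure from the shifted-Poisson formalism. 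So your route to (3) is a legitimate alternative construction of \emph{a} Poisson structure on $M^s_\eta(r,d,n)$, but it is logically downstream of Theorem~\ref{NSthm1} in the paper's organization, and it does not reproduce the original \cite{NS06} argument that the theorem is citing. One small correction: your invocation of Theorem~\ref{equiv-stack} is misplaced---that statement compares Kronecker complexes on $\PP^2$ with those on $C$ (the $\eta=o$ case); the identification with sheaves on $\PP^2_\eta$ for general $\eta$ is part of what \cite{NS06} proves.
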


The key step in the proof of Theorem \ref{NSthm1} is to show that the moduli functor of torsion free sheaves on $\PP^2_\eta$ of rank $r$, degree $d$ and $c_2=-n$ is equivalent to the moduli functor of Kronecker complexes \eqref{Keta}. Then the coarse moduli space $M^{ss}_\eta(r,d,n)$ can be constructed using geometric invariant theory.
On the other hand, using Corollary \ref{Casi}, we obtain a $0$-shifted Poisson structure on the corresponding moduli stack
$\cM^{ss}_\eta(r,d,n)$. Below we will compute the classical shadow $H^0(\Pi_h)$ of this $0$-shifted Poisson structure and show that it descends to a classical Poisson structure on the coarse moduli space $M^{s}_\eta(r,d,n)$, which coincides with the Poisson structure of Nevins and Stafford. Note that the Poisson structure of Nevins and Stafford is only defined on the smooth part of the coarse moduli space. So our $0$-shifted Poisson structure carries some additional information.

\begin{theorem}\label{Thmapp1}
For every choice of smooth plane cubic $C\subset \PP^2$ and $\eta\in C$,
the moduli stack of semistable torsion free sheaves on $\PP^2_\eta$ is a $0$-shifted Poisson  substack of $\dCx(C)$. Its classical shadow coincides with the Poisson structure of Nevins and Stafford defined on the smooth locus.
\end{theorem}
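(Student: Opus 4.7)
The plan is to treat the two claims separately. For the first claim, that $\cM^{ss}_\eta(r,d,n)$ is a $0$-shifted Poisson substack of $\dCx(C)$, I would combine Nevins--Stafford's monad description with Corollary \ref{Casi}. By Theorem \ref{NSthm1}, the moduli functor of semistable torsion free sheaves on $\PP^2_\eta$ with invariants $(r,d,n)$ is equivalent, via taking the underlying Kronecker complex and restricting to $C$, to an open substack of the moduli of complexes of the form \eqref{Keta}. This latter is precisely an open substack of the homotopy fiber $F_{y_\eta}$ of $p:\dCx(C)\to\dVt^\gr(C)$ over the stacky point $y_\eta=(\cL^\vee\otimes V_{-1},\cO_C\otimes V_0,\cL^\eta\otimes V_1)$. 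Corollary \ref{Casi} (applied with $S=\ZZ$) equips $F_{y_\eta}$ with a canonical $0$-shifted Poisson structure, which restricts to the desired open substack.

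For the second claim, I would unpack the classical shadow $H^0(\Pi_h)$ on the smooth locus of the coarse moduli $M^s_\eta(r,d,n)$ and match it with the Nevins--Stafford bivector. At a stable monad $\bK^\eta_C$ with middle cohomology $E$, the tangent complex of $F_{y_\eta}$ is computed as the homotopy fiber of $\fp^+|_{\bK^\eta_C}\to \fg^0|_{y_\eta}$, that is, a suitable truncated endomorphism complex $\sHom^\bullet(\bK^\eta_C,\bK^\eta_C)$ on $C$ with the ``graded-endomorphism'' part killed. Via the monad description and the equivalence of Theorem \ref{equiv-stack}, this cohomology recovers $\Ext^1_{\PP^2_\eta}(E,E)$, and vanishing of $\Ext^2_{\PP^2_\eta}(E,E)$ on the smooth locus ensures the tangent complex is concentrated in degree zero so the shadow is an honest bivector. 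By construction, the Lagrangian homotopy $h$ from Theorem \ref{Lagstr} is built from the difference of Chern characters $Ch_2(q(\cE))-Ch_2(q'(\cE))$, whose underlying $2$-form is the trace pairing composed with composition of endomorphisms; after integration along $C$ via Theorem \ref{LagMap} this yields a map $\Ext^1\to\Ext^1$ assembled from Serre duality on $C$ and the Yoneda product on the Kronecker complex.

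To identify this with Nevins--Stafford's Poisson structure (Section 9 of \cite{NS06}), I would note that their construction is itself defined on the Kronecker-complex moduli by a pairing built from the trace map on $\sHom^\bullet(\bK_C^\eta,\bK_C^\eta)$ and the Serre duality isomorphism $H^1(C,\cO_C)\cong k$, combined with the boundary maps coming from the differential of the Kronecker complex. Thus both bivectors arise, on the same presentation of the tangent space, from the same two ingredients (Serre duality on the elliptic curve and composition of morphisms of Kronecker complexes). The proof reduces to a diagram chase on the explicit complex computing $\Ext^1_{\PP^2_\eta}(E,E)$ via the Kronecker data, showing that the two recipes yield the same morphism modulo an overall universal scalar independent of the moduli point, which can then be fixed by evaluation on a single test case (e.g.\ a single split Kronecker complex with $\eta=o$).

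The main obstacle is this last comparison: matching, term by term on the endomorphism complex of a Kronecker complex, the $0$-shifted bivector produced by our derived-geometric construction with Nevins--Stafford's explicit Poisson tensor. Concretely, one must verify that the restriction functor $\qgr(S_\eta)\to\coh(C)$ is compatible with Serre duality up to a controlled twist, and that the categorical Chern character, once pulled back along the evaluation $C\times\dCx(C)\to \RR\Perf$ and integrated against $[C]$, specializes on Kronecker complexes to precisely the trace-and-composition pairing that Nevins--Stafford use. Once this compatibility is in place, the agreement of the classical shadow with their bivector on the smooth locus of $M^s_\eta(r,d,n)$ follows immediately.
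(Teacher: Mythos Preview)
Your first paragraph matches the paper's argument exactly: identify $\cM^{ss}_\eta(r,d,n)$ with an open substack of $F_{y_\eta}$ via the Nevins--Stafford monad description, then invoke Corollary~\ref{Casi}.

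The second part, however, has a genuine gap. You propose to argue that both bivectors are built from ``the same two ingredients'' (Serre duality on $C$ and composition on the Kronecker complex), hence agree up to a universal scalar to be fixed on a test case. This is not a proof: there is no a priori reason that two maps assembled from the same raw materials must be proportional, and your proposed scalar-fixing step would itself require knowing the maps explicitly. You also introduce an unnecessary complication by invoking compatibility of the restriction functor $\qgr(S_\eta)\to\coh(C)$ with Serre duality; the paper never needs this, since everything takes place on $C$ from the start.

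What the paper actually does is compute $\Pi_h$ directly at a complex $\cE$. Writing $\cC^\bullet$ for the sheaf $\sHom^\bullet(\cE,\cE)$ with differential $\partial=[\phi,-]$, the diagram $\fp^+\to\fg\oplus\fg^0$ from Theorem~\ref{Lagstr} becomes $\cC^{\ge 0}\to\cC^\bullet\oplus\cC^0$; one identifies the cone with $\cC^{\le 0}$ via an explicit isomorphism, reads off that $\Theta_h[-1]$ is the restriction of the trace pairing $\kappa$, and then inverts in degree zero to find $g_\cE^*\Pi_h=\ad\circ\ft$, where $\ft=\sum_i(-1)^i t_i$ and $\ad:\cC^{\le 0}\to\cC^{\ge 0}[1]$ is the obvious chain map built from $\partial$. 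This is exactly the map $\psi$ of \cite[Lem.~9.6]{NS06}, so the classical shadow agrees on the nose, with no scalar ambiguity and no test case needed. The missing idea in your proposal is this explicit unwinding of $\Pi_h$ through the cone construction.
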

\begin{proof}
\cite[Prop.\ 6.20]{NS06} shows that the moduli space of semistable torsion free sheaves on $\PP^2_\eta$ is equivalent, as Artin 1-stacks, to the moduli space of complexes of the form \eqref{Keta} with certain stability condition defined in Section 6 of \cite{NS06}.
By Corollary \ref{Casi}, the $0$-shifted Poisson structure on $\dCx(C)$ restricts to $F_{y_\eta}$. Now let us compute its classical shadow.

Let 
\[
\xymatrix{
\cE=\{ \ldots \cE^{i-1}\ar[r]^{\phi_{i-1}} &\cE^i\ar[r]^{\phi_i} & \cE^{i+1}\ar[r] &\ldots \}
}
\]
be a complex of vector bundles on $C$.
The tangent complex $\TT_{\RR\Perf(C),\cE}$ is quasi-isomorphic with $\Gamma(C^{cos},\cC^\bullet)$, where $\cC^\bullet$ is a complex of vector bundles defined by
\[
\cC^d=\bigoplus_i \sHom(\cE^i,\cE^{i+d}) ~~\text{for}~~d\in\ZZ
\]
and differential $\partial$ defined by 
\[
\partial(f^d_i)=\phi_{i+d} f^d_i-(-1)^d f^d_{i+1}\phi_i,  \text{~for~}  f^d_i \in\sHom(\cE^i,\cE^{i+d}).
\]
The tangent complex $\TT_{\dCx(C),\cE}$ is quasi-isomorphic to $\Gamma(C^{cos},\sigma^{\geq 0} \cC^\bullet)[1]$.

We denote by $g_\cE: C\to \RR\Perf$ the map that corresponds to $\cE\in \RR\Perf(C)$.
Using the general theory of mapping stacks (\cite[Sec.\ 2.1]{PTVV} for details), we can identify $\TT_{\RR\Perf(C),\cE}$  and  $\TT_{\dCx(C),\cE}$
with $\RsHom(\cO_C,g_\cE^*\fg)[1]$ and $\RsHom(\cO_C,g_\cE^*\fp^+)[1]$ where $\fg$ and $\fp^+$ are defined in Section \ref{sec:PoiCplx}. Clearly, $g_\cE^*\fg$ and $g_\cE^*\fp^+$ are represented by $\cC^\bullet$ and $\sigma^{\geq 0} \cC^\bullet$, respectively.

Recall the commutative diagram \eqref{dig1}:
\[
\xymatrix{
\fp^+\ar[r]^\Delta& \fg\oplus\fg^0\ar[r]\ar[d]^{(\kappa,\alpha)} & (\fg\oplus\fg^0)/\fp^+\ar[d]^{\Theta_h[-1]}\ar[r] & \fp^+[1]\\
& (\fg\oplus\fg^0)^\vee\ar[r] & (\fp^+)^\vee\ar[ur]^{\Pi_h}
}
\]

The morphism $\Theta_h$ is induced by the bilinear form $(\kappa,\alpha)$. Let $g_\cE^*\kappa$ and $g_\cE^*\alpha$ 
denote the pairings on $\cC^\bullet$ and $\cC^0$, obtained by pulling back $\kappa$ and $\alpha$. There is a commutative diagram of complexes of vector bundles:
\[
\xymatrix{
\cC^{\geq 0}\ar[r]^{g_\cE^*\Delta} &\cC^\bullet\oplus \cC^0\ar[r] &(\cC^\bullet\oplus \cC^0)/\cC^{\geq 0}\ar[d]_{g_\cE^*(\Theta_h[-1])}\ar[r] & \cC^{\geq 0}[1]\\
&&(\cC^{\geq 0})^\vee\ar[ur]^{g_\cE^*\Pi_h}
}
\]
Here $\cC^{\geq 0}$ and $\cC^{\leq 0}$ are defined to be $\sigma^{\geq 0} \cC^\bullet$ and  $\sigma^{\leq 0} \cC^\bullet$, and $(\cC^\bullet\oplus \cC^0)/\cC^{\geq 0}$ is the mapping cone of $g_\cE^*\Delta$. 

We define $\ad: \cC^{\leq 0}\to \cC^{\geq 0}[1]$ to be the chain map 
\[
\xymatrix{
&&\cC^0\ar[r]^\partial &\cC^1\ar[r] &\cC^2\\
\ldots\ar[r] & \cC^{-2}\ar[r] &\cC^{-1}\ar[r]^\partial\ar[u]^\partial &\cC^0\ar[u]^0
}
\]
There is an isomorphism of complexes $Cone(\ad)[-1]\cong \cC^\bullet\oplus \cC^0$, defined by 
\[
\xymatrix{
Cone(\ad)[-1]\ar[d]^a: &\ldots\ar[r] & \cC^{-1}\ar[r]^{(\partial,\partial)}\ar@{=}[d] &\cC^0\oplus \cC^0\ar[r]^{(\partial,0)}\ar[d]^{A} 
&\cC^1\ar@{=}[d]\ar[r] &\ldots\\
\cC^\bullet\oplus \cC^0:& \ldots\ar[r] & \cC^{-1}\ar[r]^{(\partial,0)} &\cC^0\oplus \cC^0\ar[r]^{(\partial,0)} &\cC^1\ar[r]&\ldots}
\]
with $A=\left(\begin{array}{cc}1&0\\ 1&-1\end{array}\right)$. Note that the diagram
\[
\xymatrix{
\cC^{\geq 0}\ar@{=}[d]\ar[r] &Cone(\ad)[-1]\ar[d]^a\\
\cC^{\geq 0}\ar[r]^{g_\cE^*\Delta} &\cC^\bullet\oplus \cC^0
}
\] commutes. It follows that there is a quasi-isomorphism $(\cC^\bullet\oplus \cC^0)/\cC^{\geq 0}\simeq \cC^{\leq 0}$.

Now we observe that $g_\cE^*\Theta_h[-1]:\cC^{\leq 0}\to(\cC^{\geq 0})^\vee$ is precisely the restriction of  $g_\cE^*\kappa$.
The degree 0 component of $(g_\cE^*\kappa)^{-1}|_{\cC^0}$, is equal to $\ft:=\sum_i t_i$, where
\[
t_i: \sHom(\cE^i,\cE^i)^\vee\to  \sHom(\cE^i,\cE^i)
\] is $(-1)^i$ times the natural duality isomorphism. So we conclude that
\[
g_\cE^*\Pi_h=\ad\circ \ft.
\]
This coincides with the map $\psi$ defined by Nevins and Stafford in \cite[Lem.\ 9.6]{NS06}.
Therefore,
\[
\HH^1(\Pi_h): \HH^1((\cC^{\geq 0})^\vee[-1])\to \HH^1(\cC^{\geq 0})
\]
matches the Poisson structure in \cite{NS06} over the smooth locus of the coarse moduli space\footnote{The construction of \cite{NS06} in fact produces a morphism $\psi:H^0(\LL_\cM)\to H^0(\TT_\cM)$ on the open substack 
$\cM\subset \cM^{ss}_\eta(r,d,n)$, which is the preimage of the smooth locus of the coarse moduli space.}.
\end{proof}

\begin{prop}\label{Poicoarse}
The classical shadow of our $0$-shifted Poisson structure descends to a classical Poisson structure along the coarse moduli functor $f: \cM^s_\eta(r,d,n)\to M^s_\eta(r,d,n)$.
\end{prop}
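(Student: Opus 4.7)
The plan is to combine the $\GG_m$-gerbe structure of $f$ with the identification of $H^0(\Pi_h)$ with the Nevins--Stafford bivector established in Theorem \ref{Thmapp1}. The descent of the bivector from the stack to the coarse moduli space is the main technical point, after which the Jacobi identity is inherited from \cite{NS06}.

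First I would observe that every stable sheaf $E$ has $\End(E)=k$, so $Aut(E)=k^\times$ and the coarse moduli map $f$ is a $\GG_m$-gerbe over the smooth scheme $M^s_\eta(r,d,n)$ (smoothness from Theorem \ref{NSthm1}(2)). As computed in the proof of Theorem \ref{Thmapp1}, the tangent complex at $E$ is quasi-isomorphic to $\sigma^{\geq 0}\RsHom(E,E)[1]$, whose cohomology sheaves on the stack are $H^{-1}=\End(E)=k\cdot \id_E$ (the gerbe direction) and $H^0=\Ext^1(E,E)$. The $\GG_m$-inertia acts on $\Ext^{\bullet}(E,E)$ by conjugation, hence trivially, so the coherent sheaves $H^0(\LL_{\cM^s_\eta(r,d,n)})$ and $H^0(\TT_{\cM^s_\eta(r,d,n)})$ are canonically the pullbacks along $f$ of $\Omega_{M^s_\eta(r,d,n)}$ and $T_{M^s_\eta(r,d,n)}$, respectively.

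Next, because the inertia acts trivially on both the source and the target, $H^0(\Pi_h)$ is automatically $\GG_m$-equivariant, so by quasi-coherent descent for $\GG_m$-gerbes it is the pullback of a unique morphism $\pi_h:\Omega_{M^s_\eta(r,d,n)}\to T_{M^s_\eta(r,d,n)}$. Skew-symmetry of $\pi_h$ can be read off from the explicit formula $g_\cE^*\Pi_h=\ad\circ\ft$ in the proof of Theorem \ref{Thmapp1}: the signs $(-1)^i$ appearing in $\ft$, together with the cyclic property of the trace pairing used to define $\kappa$, force antisymmetry after descent. (Alternatively, antisymmetry is a general feature of classical shadows of $0$-shifted Poisson structures.)

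Finally, Theorem \ref{Thmapp1} identifies $\pi_h$ with the Nevins--Stafford bivector $\psi$ on the preimage of the smooth locus of the coarse moduli of semistable sheaves, a locus containing $M^s_\eta(r,d,n)$ since the latter is smooth. By \cite{NS06}, $\psi$ satisfies the Jacobi identity, so $[\pi_h,\pi_h]\in \Gamma(M^s_\eta(r,d,n),\wedge^3 T_{M^s_\eta(r,d,n)})$ vanishes on an open dense subset of the smooth variety $M^s_\eta(r,d,n)$, and hence vanishes identically. The main obstacle is really the descent step itself, which reduces to the triviality of the scalar conjugation action on $\Ext^\bullet(E,E)$; an intrinsic proof of Jacobi from the $0$-shifted Poisson structure would require the forthcoming Costello--Rozenblyum comparison and is not needed here.
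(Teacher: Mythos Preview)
Your proposal is correct and follows essentially the same route as the paper: both arguments identify $f$ as a $\GG_m$-gerbe over the smooth scheme $M^s_\eta(r,d,n)$, observe that $H^0(\TT_{\cM^s})$ and $H^0(\LL_{\cM^s})$ are pullbacks of $T_{M^s}$ and $\Omega^1_{M^s}$, and then descend $H^0(\Pi_h)$ along $f$. The paper's descent step is phrased slightly differently---rather than invoking equivariance for the trivial conjugation action, it simply uses $f_*\cO_{\cM^s}\simeq\cO_{M^s}$ so that a map between pulled-back sheaves descends---but this is the same content. The paper also explicitly records $\Ext^2_{\PP^2_\eta}(\cF,\cF)=0$ (from \cite[Lem.\ 7.14]{NS06}) to conclude that the derived structure on $\cM^s$ is trivial, a point you leave implicit.

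One genuine difference: the paper's proof stops once the morphism $\Omega^1_{M^s}\to T_{M^s}$ is obtained and says nothing about skew-symmetry or the Jacobi identity, presumably relying silently on the identification with the Nevins--Stafford bivector in Theorem~\ref{Thmapp1}. You make this reliance explicit, which is arguably an improvement given that the proposition asserts a \emph{Poisson} structure and not merely a bivector. Your minor slip---describing the tangent complex as $\sigma^{\geq 0}\RsHom(E,E)[1]$, which is the tangent complex of $\dCx(C)$ rather than of the fiber $F_{y_\eta}$---does not affect the argument, since only the $\GG_m$-gerbe structure and the identification of $H^0$ are used.
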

\begin{proof}
Let $\cF$ be a stable torsion-free sheaf in $\qgr(S_\eta)$. For simplicity, we denote $\cM^s_\eta(r,d,n)$ and  $M^s_\eta(r,d,n)$ by $\cM^s$ and $M^s$ respectively. 
It follows from \cite[Lem.\ 7.14]{NS06} that $\Ext^2_{\PP^2_\eta}(\cF,\cF)=0$. Therefore, the derived structure on $\cM^s$ is trivial. Furthermore, the morphism $f: \cM^s\to M^s$ is a $\GG_m$-gerbe, and there are natural isomorphisms
\[
f^*T_M\cong H^0(\TT_\cM),~~~f^*\Omega^1_M\cong H^0(\LL_\cM).
\]
Hence, the classical shadow of our Poisson structure is a morphism $\psi:f^*\Omega^1_M\to f^*T_M$.
But $f_*\cO_{\cM^s}\simeq \cO_{M^s}$, since $f$ is a $\GG_m$-gerbe, so $\psi$ descends to a morphism $\Omega^1_M\to T_M$.
\end{proof}

Bottacin constructed in \cite{Bo95} for every smooth projective Poisson surface $S$ a canonical Poisson structure on the smooth part of the coarse moduli space of stable torsion-free sheaves over $S$.
For $S=\PP^2$ a choice of a nonzero Poisson structure on $S$ (up to rescaling) is same as a choice of a cubic curve $C$ (possibly singular).  For smooth $C$ it can be checked that the Poisson structure of Bottacin coincides with the one obtained in Proposition \ref{Poicoarse}. We conjecture that for other surfaces there is a similar relation to the $0$-shifted Poisson structures
on the moduli stack of complexes on $C$.

\begin{conjecture}
Let $S$ be a smooth projective Poisson surface and let $C\subset S$ be a smooth anticanonical divisor such that the Poisson bivector degenerates at $C$. Then there exists an isomorphism between the moduli stack of semistable torsion-free sheaves on $S$ and a Poisson substack of the moduli stack of complexes of vector bundles on $C$, such that the $0$-shifted Poisson structure descends to the Poisson structure of Bottacin over the smooth locus of the coarse moduli space.
\end{conjecture}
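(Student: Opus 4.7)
The plan is to mimic the argument of Theorem~\ref{Thmapp1} in three steps, all built on producing a ``monad''-like description of torsion-free sheaves on $S$ in terms of vector bundles that restrict well to $C$. First I would look for a resolution of the diagonal $\Delta_S\subset S\times S$ by a bounded complex whose terms are outer tensor products of vector bundles on $S$, in order to associate, functorially to each semistable torsion-free sheaf $F$ on $S$, a complex $\bK(F)$ of vector bundles on $S$ whose only nonzero cohomology sheaf is $F$. For $S=\PP^2$ this is precisely the Beilinson monad of Proposition~\ref{monaddes}; for a rational Poisson surface admitting a tilting bundle or full exceptional collection (del Pezzo, Hirzebruch, weighted projective planes) an analogous finite complex should exist, while for abelian and K3 Poisson surfaces the resolution of the diagonal should be replaced by a Fourier--Mukai kernel.

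Second, I would verify the analog of Theorem~\ref{equiv-stack}: that the restriction $\bK\mapsto \bK|_C$ induces an equivalence between the moduli stack of complexes of the form $\bK(F)$ on $S$ and the moduli stack of their restrictions on $C$. This should reduce to computing $\Hom$-groups of the building blocks using the short exact sequence
$$0\to \omega_S\to\cO_S\to i_*\cO_C\to 0,$$
combined with appropriate Kodaira-type vanishing on $S$, in the same way that the isomorphism $\Hom_{\PP^2}(\cO,\cO(i))\cong\Hom_C(\cO_C,\cO_C(i))$ for $i=0,1,2$ drives the $\PP^2$ case. Once this equivalence is in place, Corollary~\ref{Casi} automatically endows the resulting open substack of $\dCx(C)$ with a $0$-shifted Poisson structure (by fixing the discrete invariants of $F$, which correspond to fixing the graded components of $\bK|_C$), and, as in Proposition~\ref{Poicoarse}, the derived structure is trivial at stable points, so this descends to a classical Poisson bivector on the smooth locus of the coarse moduli space. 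The explicit formula $g_\cE^*\Pi_h=\ad\circ\ft$ from the proof of Theorem~\ref{Thmapp1} then has to be matched with Bottacin's bivector on $\Ext^1_S(F,F)^{\otimes 2}$, which pairs extension classes using the Poisson section $s\in H^0(S,\omega_S^{-1})$ whose vanishing locus is $C$. The identification should follow from the Poincar\'e residue isomorphism $H^0(S,\omega_S^{-1})\simeq H^0(C,\cO_C)$ together with the long exact sequence comparing $\Ext^*_S(F,F)$ with $\Ext^*_C(\bK|_C,\bK|_C)$, Serre duality on $C$ playing the role it did in Theorem~\ref{Lagpoint}.

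The hard part will be the first step: there is no universal ``monad'' that works for all Poisson surfaces, so the construction likely has to be carried out case by case (rational surfaces via tilting objects and exceptional collections; abelian and K3 surfaces via Fourier--Mukai kernels supported on $C\times S$; ruled surfaces over elliptic curves by a mixture of the two). A conceptually cleaner route would be to avoid explicit monads and instead build a derived functor $\dcoh(S)\to \dCx(C)$ from a Fourier--Mukai kernel supported on $C$, and then prove abstractly that it induces an open embedding of moduli stacks which intertwines our $0$-shifted Poisson structure with Bottacin's. A secondary difficulty is the combinatorial bookkeeping required to match $\ad\circ\ft$ with Bottacin's bivector when $\bK$ has more than three terms, since the proof of Theorem~\ref{Thmapp1} made essential use of the three-term Kronecker shape.
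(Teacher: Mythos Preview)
The statement you are attempting to prove is a \emph{conjecture} in the paper, not a theorem: the paper provides no proof and explicitly leaves it open. So there is no ``paper's own proof'' to compare against. What you have written is a plausible strategy rather than a proof, and you are candid about this yourself---the first step (producing a uniform monad/Beilinson-type resolution for an arbitrary Poisson surface $S$) is exactly the obstruction the paper does not know how to overcome, which is why the statement is labeled a conjecture.

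A few concrete comments on the strategy. Your second step, reducing to restriction along $C$ via the sequence $0\to\omega_S\to\cO_S\to i_*\cO_C\to 0$, is the right mechanism, but note that in the $\PP^2$ case it works because $H^i(\PP^2,\cO(-j))=0$ in the relevant range; for a general Poisson surface you would need vanishing statements that may simply fail, so the equivalence of moduli functors need not hold term-by-term and you may need a more derived comparison. Your suggestion to bypass explicit monads via a Fourier--Mukai kernel supported on $C$ is the more promising route, but then you must show that the resulting functor lands in $\dCx(C)$ (complexes with chain isomorphisms) rather than merely $\RR\Perf(C)$, which is not automatic. Finally, the matching with Bottacin's bivector via $\ad\circ\ft$ is more delicate than you indicate: Bottacin's construction uses the pairing $\Ext^1_S(F,F)\otimes\Ext^1_S(F,F)\to\Ext^2_S(F,F)\to H^2(S,\cO_S)$ twisted by the Poisson section, and identifying this with the hypercohomology computation on $C$ requires tracking the residue map through the entire resolution, not just at a single term.
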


\subsection{Symplectic leaves of $\cM^s_\eta(1,0,n)$}\label{ell-def-sympl-leaves-sec}

In this section we discuss the symplectic leaves of $\cM^{ss}_\eta(r,d,n)$. As before, we consider normalized sheaves, i.e., 
assume that $-r<d\leq 0$. Then $\cM^{ss}_\eta(r,d,n)$  can be identified with the moduli space of semistable Kronecker complexes
\[
\bK_C^\eta: \xymatrix{
\cL^\vee\otimes V_{-1}\ar[r]^a &\cO_C\otimes V_0\ar[r]^b & \cL^\eta\otimes V_1}\]
for an appropriate stability conditions defined in \cite[Sec.\ 6]{NS06}. It is equipped with a $0$-shifted Poisson structure via the identification of $\cM^{ss}_\eta(r,d,n)$ with an open substack of $F_y$, for $y=(\cL^\vee\otimes V_{-1},\cO_C\otimes V_0, \cL^\eta\otimes V_1)$. 
Since a semistable Kronecker complex has only middle cohomology, the map $q:F_y\to \RR\Perf(C)$ restricts to the map
\[
q: \cM^{ss}_\eta(r,d,n)\to \coh(C)
\] sending a Kronecker complex to its middle cohomology sheaf. For a sheaf $\cH\in\coh(C)$, we denote
by $F_{\cH}$ the homotopy fiber of the stacky point represented by $\cH$.

As $\cM^{s}_\eta(r,d,n)$ is smooth, we expect that the symplectic leaves of $\cM^s_\eta(r,d,n)$ should descend to symplectic leaves for the classical Poisson structure on the coarse moduli scheme $M^{s}_\eta(r,d,n)$.  This is indeed the case
assuming the coarse moduli space of a symplectic leaf is smooth.

For simplicity, we denote $\cM^s_\eta(r,d,n)$ and  $M^s_\eta(r,d,n)$ by $\cM^s$ and $M^s$ respectively. 

\begin{prop}\label{Leavescoarse}
Let $F_\cH$ be the homotopy fiber product of 
\begin{equation}\label{FH}
\xymatrix{
& \cM^s\ar[d]\\
x_\cH\ar[r] & \coh(C)}
\end{equation} where $x_\cH$ is the stacky point corresponding to $\cH\in \coh(C)$.
Assume $F_\cH$ is non-empty and the coarse moduli scheme $F_\cH^c$ of $F_\cH$ is smooth. Then the $0$-shifted symplectic structure on $F_\cH$ from Corollary \ref{leaves} descends to a classical symplectic structure on $F_\cH^c$.
\end{prop}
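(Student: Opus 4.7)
The plan is to adapt the argument of Proposition \ref{Poicoarse} from the Poisson to the symplectic setting, by first reducing to the classical underived situation, then descending the underlying $2$-form across the coarse moduli map, and finally verifying classical closedness.

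First I would check that, under the smoothness hypothesis on $F_\cH^c$, the derived stack $F_\cH$ is already a smooth classical Artin $1$-stack. Since $\cM^s$ is smooth by Theorem \ref{NSthm1}(2) (using $\Ext^2_{\PP^2_\eta}(\cF,\cF)=0$ from \cite[Lem.\ 7.14]{NS06}), and both the stacky point $x_\cH$ and $\coh(C)$ have tangent complexes concentrated in degrees $\leq 0$ (because $C$ is a curve, so $\Ext^{\geq 2}$ on $C$ vanishes), the tangent complex $\TT_{F_\cH}$ of the homotopy fiber product \eqref{FH} has cohomology only in degrees $-1$ and $0$. Combined with the smoothness of $F_\cH^c$, this forces the natural map $t_0(F_\cH)\to F_\cH$ to be an equivalence onto a smooth Artin $1$-stack. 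Exactly as in the proof of Proposition \ref{Poicoarse}, since stable Kronecker complexes have scalar automorphism group $\GG_m$, the map $f:F_\cH\to F_\cH^c$ is a $\GG_m$-gerbe; this yields the canonical identifications $f_*\cO_{F_\cH}\simeq\cO_{F_\cH^c}$, $f^*T_{F_\cH^c}\simeq H^0(\TT_{F_\cH})$, and $f^*\Omega^1_{F_\cH^c}\simeq H^0(\LL_{F_\cH})$.

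Next, the underlying $2$-form $\omega_0$ of the $0$-shifted symplectic structure on $F_\cH$ induces, by non-degeneracy, a quasi-isomorphism $\TT_{F_\cH}\to\LL_{F_\cH}$; taking $H^0$ and descending through the gerbe $f$ produces a non-degenerate $2$-form $\bar\omega$ on the smooth scheme $F_\cH^c$ with $f^*\bar\omega=H^0(\omega_0)$. It then remains to show that $\bar\omega$ is classically closed. A $0$-shifted closed $2$-form on $F_\cH$ is an element of $H^0(NC^w(F_\cH)(2))$: a sequence $(\omega_0,\omega_1,\omega_2,\ldots)$ with $\omega_p\in(\wedge^{2+p}\LL_{F_\cH})[-p]$ satisfying the standard compatibility $d\omega_p+d_{DR}\omega_{p-1}=0$. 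In our smooth underived situation $\LL_{F_\cH}$ is concentrated in degrees $[0,1]$ (the degree $1$ piece reflecting only the gerbe direction), so the higher components contribute nothing at the level of $H^0$, and the single surviving condition is $d_{DR}\omega_0=0$ in the classical sense on the smooth stack $t_0(F_\cH)$. Since the de Rham differential commutes with pullback along the $\GG_m$-gerbe $f$ and $f_*\cO_{F_\cH}\simeq\cO_{F_\cH^c}$, closedness of $\omega_0$ descends to closedness of $\bar\omega$.

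The main obstacle is precisely this last step: verifying that the tower of higher closed-form components of the shifted structure imposes no extra constraints beyond classical closedness of $\omega_0$, and that descent along the gerbe $f$ preserves both closedness and non-degeneracy. Once the smoothness hypothesis on $F_\cH^c$ is used to kill the derived structure on $F_\cH$ and to make $f$ a $\GG_m$-gerbe of smooth stacks, the reduction to the classical de Rham theory on $F_\cH^c$ should be routine; the delicate point is simply to unwind the definition of ``$0$-shifted closed $2$-form'' carefully enough to see that no higher obstructions survive in this underived smooth context.
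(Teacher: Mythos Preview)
Your argument contains a genuine error at the very first step: the claim that $\TT_{F_\cH}$ has cohomology only in degrees $-1$ and $0$ is false. For a homotopy fiber product $F=X\times^h_S Y$ one has an exact triangle $\TT_F\to \TT_X\oplus\TT_Y\to\TT_S\to\TT_F[1]$, so $H^1(\TT_F)$ is the cokernel of $H^0(\TT_X)\oplus H^0(\TT_Y)\to H^0(\TT_S)$, which need not vanish even when all three tangent complexes live in nonpositive degrees. In the case at hand, $H^0(\TT_{x_\cH})=0$ and the relevant cokernel is that of $H^0(\TT_{\cM^s,\cF})\to \Ext^1_C(\cH,\cH)$; the paper computes this cokernel to be one-dimensional, identified with the tangent space of $\Pic(C)$ at $\det(\cH)$ via the trace map. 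Thus $H^1(\TT_{F_\cH,\cF})\cong\CC$, and dually $H^{-1}(\LL_{F_\cH,\cF})\cong\CC$: the derived structure on $F_\cH$ is nontrivial. The paper flags this explicitly as ``an important fact'' and it is consistent with the $0$-shifted symplectic pairing, which matches $H^{-1}(\TT)\cong\CC$ (from the $\GG_m$-automorphisms) against $H^{1}(\TT)\cong\CC$.

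Consequently your reduction ``$t_0(F_\cH)\to F_\cH$ is an equivalence'' fails, and with it the claim that $\LL_{F_\cH}$ sits in degrees $[0,1]$, which was what you used to kill the higher components $\omega_p$ of the closed form. The salvageable part of your outline is the gerbe/descent step: what actually happens is that $t_0(F_\cH)$ is a $\GG_m$-gerbe over the smooth scheme $F_\cH^c$, and it is only the \emph{classical shadow} $H^0(\Pi_h)$ (equivalently, the induced isomorphism $H^0(\TT_{F_\cH})\to H^0(\LL_{F_\cH})$) that one descends, exactly as in Proposition~\ref{Poicoarse}. The nontrivial derived direction, being one-dimensional and paired with the gerbe direction under $\omega$, does not obstruct this descent; but you must argue with the truncation $t_0(F_\cH)$ rather than with $F_\cH$ itself.
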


\begin{proof}
An important fact is that even though $\cM^s$ is un-derived (see the proof of Proposition \ref{Poicoarse}), the homotopy fiber $F_\cH$ carries a nontrivial derived structure. Given $\cF\in \cM^s$, the exact triangle of the tangent complex of the homotopy fiber product implies that $H^{-1}(\TT_{F_\cH,\cF})\cong \CC$ and gives a long exact sequence
\[
\xymatrix{
0\ar[r] & H^0(\TT_{F_\cH,\cF})\ar[r] & H^0(\TT_{\cM^s,\cF})\ar[r] & \Ext_C^1(\cH,\cH)\ar[r] & H^1(\TT_{F_\cH,\cF})\ar[r] & 0}
\]
The existence of a $0$-shifted symplectic structure on $F_\cH$ implies that $H^1(\TT_{F_\cH,\cF})\cong \CC$. In fact, we may identify $H^1(\TT_{F_\cH,\cF})$ with the tangent space of the Picard stack $Pic(C)$ at $\det(\cH)$, so that the map 
$\Ext_C^1(\cH,\cH)\to H^1(\TT_{F_\cH,\cF})$ is precisely the tangent map of $\det: \coh(C)\to Pic(C)$ at $\cH$.

The proof of the fact that the $0$-shifted symplectic structure descends to $F_\cH^c$ is similar to the proof of Proposition \ref{Poicoarse}. We simply observe that $F_{\cH}$ is a $\GG_m$-gerbe over $F_\cH^c$, and hence,
the classical shadow of the $0$-shifted symplectic structure on $F_{\cH}$ descends to $F_\cH^c$.
\end{proof}

As a concrete example, we calculate the symplectic leaves in the case $r=1, d=0$.
In this case, $\cM^{ss}_\eta(1,0,n)$ and $\cM^{s}_\eta(1,0,n)$ coincide. Also $\cM^s$ is a trivial $\GG_m$-gerbe over $M^s$. 
Following the standard notation for the Hilbert scheme of $n$ points, we denote the coarse moduli space of $\cM^s_\eta(1,0,n)$ by $(\PP^2_\eta)^{[n]}$. An object in $\cM^s$ is a right ideal of $S_\eta$. By the proof of Theorem \ref{NSthm1}, the isomorphism classes of ideals $\cI$ of $c_2=-n$ are in one to one correspondence with the isomorphism classes of stable Kronecker complexes  of the form
\begin{equation}\label{KI}
\xymatrix{
\bK_C(\cI):& \cL^\vee\otimes \CC^n\ar[r]^a &\cO_C\otimes \CC^{2n+1}\ar[r]^b & \cL^\eta\otimes \CC^n}
\end{equation}
Furthermore, by \cite[Lemma 2.6]{NS06}, we have $\cI|_C\cong H^0(\bK_C(\cI))$. For a sheaf $\cH$ on $C$, we set
$ch(\cH):=(r(\cH),d(\cH))$.
Note that $ch(H^0(\bK_C(\cI)))=(1,0)$, so we have
\[
H^0(\bK_C(\cI))\simeq \cO_C(-D)\oplus \cT, 
\]
where $D$ is divisor of a degree $l$ and $\cT$ is a torsion sheaf of length $l$. Let
\[
\cT\simeq \bigoplus_{i=1}^k\left( \bigoplus_{j=1}^{d_i}\cO_{j\cdot p_i}^{\oplus r_{ij}}\right).
\]
Note that $l=\sum_{i=1}^k\sum_{j=1}^{d_i} j\cdot r_{ij}$.
We define the underlying cycle of $\cT$ in $C^{(l)}$ by
\[
Z(\cT):=\sum_{i=1}^k (\sum_{j=1}^{d_i} jr_{ij})[p_i].
\]
It is easy to see that the endomorphism ring of $\bigoplus_{j=1}^{d}\cO_{j\cdot p}^{\oplus r_{j}}$ has dimension 
\begin{equation}\label{dimEnd}
\sum_{0\leq i, j\leq d} min(i,j) \cdot r_i r_j.
\end{equation}

Denote by $\coh_{1,0}(C)\subset \coh(C)$ the substack consisting of sheaves of rank 1 and degree 0. It has a stratification by the size of the torsion subsheaf with the strata
\[
\coh_{1,0}^{l}(C):=\{\cF\in \coh_{1,0}(C)| 
\text{length of the torsion subsheaf of $C$}=l\}.
\]
Let us set $\coh_{1,0}^{\leq l}(C)=\cup_{i=0}^l \coh_{1,0}^{i}(C)$.

By Proposition \ref{Leavescoarse}, to compute the symplectic leaves of $(\PP^2_\eta)^{[n]}$ it suffices to compute the image of the map $q:\cM^s_\eta(1,0,n)\to \coh(C)$. Here is a partial result in this direction.
Let us fix a neutral point $o\in C$ and denote by $k\cdot\eta$ the $k$th multiple of $\eta$ in the group law of $C$.

\begin{prop}\label{Hilb-leaf}
The projection $q: (\PP^2_\eta)^{[n]}\to \coh(C)$ factors through $\coh^{\leq n}_{1,0}(C)$.  Let $\cH:=\cO_C(-D)\oplus \cT$ be an object in  $\coh_{1,0}(C)$ such that $\cT$ has length $l\le n$. Assume that $F_\cH$ is non-empty and smooth.  Then $F_\cH$ has dimension at most $2n-2l$ and there is a rational equivalence of divisors $Z(\cT)-D\sim [o]-[3n\cdot\eta]$.
\end{prop}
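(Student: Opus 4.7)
My plan is to establish the three assertions of the proposition in order. For the length bound $l \leq n$, which gives the factorization through $\coh^{\leq n}_{1,0}(C)$, I would use the identification $\cI|_C \simeq H^0(\bK_C(\cI))$ from \cite[Lem.\ 2.6]{NS06} and relate the torsion of this sheaf to the length of $\cI$. In the commutative case $\eta = o$ the objects of $\cM^s_o(1,0,n)$ are ideals $\cI_Z$ of length-$n$ subschemes $Z \subset \PP^2$; tensoring $0 \to \cI_Z \to \cO_{\PP^2} \to \cO_Z \to 0$ with $\cO_C$ identifies the torsion part of $\cI_Z|_C$ with $\Tor_1^{\cO_{\PP^2}}(\cO_Z, \cO_C)$, whose length equals $\mathrm{length}(Z \cap C) \leq n$. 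For general $\eta$ the analogue holds: the quotient $S_\eta/\cI$ has length $n$, and its scheme-theoretic intersection with $C$ has length at most $n$, which produces the bound on the torsion of $\cI|_C$. The rank-$1$ degree-$0$ part of $\cI|_C$ is immediate from the shape of the Kronecker complex.

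For the dimension bound I would analyze the long exact cohomology sequence of the tangent complex of the homotopy fibre product $F_\cH$ in diagram \eqref{FH}. Decomposing $\cH = \cO_C(-D) \oplus \cT$, using $\Hom(\cT, \cO_C(-D)) = 0$, and applying Serre duality on the elliptic curve (where $\omega_C \simeq \cO_C$), one obtains
\[
\dim \Ext^1_C(\cH, \cH) \;=\; 1 + 0 + l + \dim \End(\cT),
\]
with the third term coming from $\dim \Hom(\cO_C(-D), \cT)^\vee = l$. Formula \eqref{dimEnd} summed over the support points of $\cT$ yields $\dim \End(\cT) \geq l$, whence $\dim \Ext^1_C(\cH, \cH) \geq 1 + 2l$. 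Combining this with $\dim \Ext^1_{S_\eta}(\cF, \cF) = 2n$ at a smooth point $\cF$ and $\dim H^1(\TT_{F_\cH, \cF}) = 1$ (identified with the tangent to $\Pic(C)$ at $\det \cH$, exactly as in the proof of Proposition \ref{Leavescoarse}), the long exact sequence gives $\dim F_\cH \leq 2n - 2l$.

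Finally, for the divisorial equivalence I would compute $\det \cH$ in two ways. The decomposition produces $\det \cH \simeq \cO_C(Z(\cT) - D)$, while taking determinants along the Kronecker resolution \eqref{KI} with ranks $(n, 2n+1, n)$ yields $\det \cH \simeq \cL^n \otimes (\cL^\eta)^{-n}$. Under the Abel-Jacobi isomorphism $\Pic^0(C) \simeq C$ based at $o$, translation by $-\eta$ shifts the class of any degree-$r$ line bundle by $r \cdot \eta$ in the group law, so $\cL^\eta \otimes \cL^{-1} \simeq \cO_C([3\eta] - [o])$, whose $(-n)$-th tensor power gives $\cL^n \otimes (\cL^\eta)^{-n} \simeq \cO_C([o] - [3n\cdot\eta])$; comparison with the first expression for $\det \cH$ yields the required equivalence. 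The main obstacle in this plan is the sharp length bound $l \leq n$ in the noncommutative setting: degree estimates on the Kronecker complex alone (bounding the saturation of $\mathrm{im}(a)$ inside $\ker(b)$) only give $l \leq 3n$, so the sharp bound genuinely requires exploiting that $\cH$ arises as the restriction of an ideal in $S_\eta$, rather than from an arbitrary Kronecker complex with dimensions $(n, 2n+1, n)$.
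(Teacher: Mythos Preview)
Your argument for the dimension bound and for the divisor equivalence is essentially the same as the paper's. The paper compresses your decomposition of $\Ext^1_C(\cH,\cH)$ into the single line $\dim\Ext^1_C(\cH,\cH)=\dim\Ext^0_C(\cH,\cH)\ge 2l+1$, invoking Serre duality on the elliptic curve and formula \eqref{dimEnd}; your piece-by-piece count reaches the same inequality. For the divisor relation the paper does a $K$-theory computation $[\cO_C(-D)]+Z(\cT)=-n[\cL^\vee]+(2n+1)[\cO_C]-n[\cL^\eta]=[\cO_C]+([o]-[3n\cdot\eta])$, which is exactly your determinant computation phrased in the Grothendieck group.

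On the first assertion (the factorization through $\coh^{\le n}_{1,0}(C)$, i.e.\ the bound $l\le n$), you actually go further than the paper. The paper's proof gives no separate argument for this: it just records $\dim F_\cH=2n+1-\dim\Ext^1_C(\cH,\cH)\le 2n-2l$ under the standing smoothness hypothesis, from which $l\le n$ follows only for those $\cH$ whose fibre happens to be smooth, not for arbitrary points in the image of $q$. Your $\Tor$ argument in the commutative case is correct and more informative, and your candid remark that the sharp bound in the noncommutative case requires more than Kronecker-complex degree estimates is accurate; the paper does not address this gap either. So your self-identified ``obstacle'' is not a defect of your plan relative to the paper---it is a point on which the paper itself is silent.
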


\begin{proof}
Consider the long exact sequence of cohomology groups associated to the fiber product \eqref{FH}. The dimension of $F_\cH$ is equal to $2n+1-\dim(\Ext_C^1(\cH,\cH))$. 
The dimension calculation \eqref{dimEnd} easily implies that
$$\dim(\Ext_C^1(\cH,\cH))=\dim(\Ext^0_C(\cH,\cH))\ge 2l+1$$  
This gives our estimate on the dimension of $F_\cH$. 
(Note that if $\cT=\cO_Z$, where $Z$ is a $0$-dimensional subscheme of length $l$, 
then the dimension of $F_\cH$ is exactly $2n-2l$ if $F_\cH$ is non-empty.)

Recall that the relative moduli space $M^s_C(r,d,n)$ is smooth over the base $C$ (see Theorem \ref{NSthm1}). 
Given $\eta\in C$ and $\cO_C(-D)\oplus \cT$ in the image of $q$, we need to compute $D$. The class of $\cO_C(-D)\oplus \cT$ in the K-theory coincides with the class of the Kronecker complex \eqref{KI}. Its class is determined by
\begin{align*}
[\cO_C(-D)]+Z(\cT)&=-n[\cL^\vee]+(2n+1)[\cO_C]-n[\cL^\eta]\\
&=-n[\cL^\vee]+(2n+1)[\cO_C]-3n([\cL]+[\eta]-[o])\\
&=[\cO_C]+([o]-[3n\cdot\eta])
\end{align*}
which gives the required relation between the classes of divisors.
\end{proof}

\begin{remark}
We believe that for  $\cH:=\cO_C(-D)\oplus \cO_Z$ such that $Z-D\sim [o]-[3n\cdot\eta]$, $F_\cH$ is always non-empty and smooth.  For the case of maximal dimensional leaf ($l=0$), this was proved by Nevins-Stafford (see \cite[Cor.\ 8.10]{NS06}) and de Naeghel-Van den Bergh (see \cite[Lemma 5.2.1]{NV04}). For $l>0$, one possible way to prove the non-emptiness is to show that the Poisson deformation $(\PP^2_\eta)^{[n]}$ can be obtained by contracting a $(1,1)$-class with the Poisson bivector in the sense of Hitchin \cite{Hi12}. 
\end{remark}

\begin{remark}
Proposition \ref{Hilb-leaf} is just the first step for the classification of the symplectic leaves on $(\PP_\eta^2)^{[n]}$. The next step is to determine which sheaves can occur in the image of the map $q$ and study the geometry of the fibers. This is quite a delicate problem, which we will explore elsewhere.

As an example,
let us describe the symplectic leaves of $(\PP^2)^{[3]}$. Here $d_F$ is the symbol for the fiber dimension (i.e. the dimension of the corresponding symplectic leaf). The symplectic leaf is determined by fixing an isomorphism class of $\cI_Z|_C$.
We have the following possibilities:
\[
\cI_Z|_C=\begin{cases}
\cO_C & l=0, d_F=6 \\
\cO_C(-p)\oplus \cO_p & l=1, d_F=4\\
\cO_C(-p-q)\oplus \cO_{p\cup q} & l=2, d_F=2\\
\cO_C(-2p)\oplus\cO_p\oplus\cO_p & l=2, d_F=0\\
\cO_C(-p-q-r)\oplus \cO_{p\cup q\cup r} & l=3, d_F=0
\end{cases}
\] The $6$-dimensional leaf is isomorphic to $(\PP^2\setminus C)^{[3]}$. To describe the 2-dimensional leaf associated with
a pair of distinct points $p,q\in C$, we consider the blow up of $\PP^2$ at $p$ and $q$ and denote it by $\wh{\PP^2}_{p,q}$. Denote the exceptional fibers at $p$ and $q$ by $E_p$ and $E_q$ respectively and denote the proper transform of $C$ by $\wh{C}$. Then the 2-dimensional leaf is isomorphic to $\wh{\PP^2}_{p,q}\setminus \wh{C}$. Here points 
of $E_p\setminus \wh{C}$ 
parameterize subschemes in $\PP^2$ given as the union of a double point at $p$ (corresponding to a direction not tangent
to $C$) and $q$. 
Next, for a point $p\in C$, let $\wh{\PP^2}_p$ denote the blow up of $\PP^2$ at $p$, with the exceptional divisor $E_p$,
and let $\wh{C}$ be the proper transform of $C$. 
Then the $4$-dimensional leaf associated with $p$ is isomorphic to $(\wh{\PP^2}_p\setminus \wh{C})^{[2]}$.
More precisely, the corresponding locally closed embedding
$$(\wh{\PP^2}_p\setminus \wh{C})^{[2]}\to (\PP^2)^{[3]}$$
sends a length 2 subscheme $\wh{Z}$ of $\wh{\PP^2}_p\setminus \wh{C}$ to the subscheme $Z\subset\PP^2$  of length 3
whose ideal sheaf is obtained as the push forward of $I_{\wh{Z}}(-E_p)$.
Finally, there are two kinds of $0$-dimensional leaves: those with $l=3$ correspond to points in the symmetric product $C^{(3)}$, while those with $l=2$ correspond to points in the punctual Hilbert scheme supported on $C$. 
\end{remark}

\section{Example two: Stable triples and Feigin-Odesskii algebras}

\subsection{Poisson structures on the moduli space of stable triples}\label{triples-sec}

In this section, we prove that for the moduli space of 2-term complexes of vector bundles with appropriate stability conditions, the $0$-shifted Poisson structure constructed in Theorem \ref{Mainthm1} specializes to the Poisson structure constructed by one of us in \cite{Pol98}. For a subclass of these examples, we compute the underlying Poisson brackets explicitly and show that they coincide with the semi-classical limits of the elliptic algebras constructed by Feigin and Odesskii \cite{FO87}. To the best of our knowledge, the comparison between these Poisson structures has not appeared in the literature before.

Let $C$ be a complex elliptic curve. We will consider moduli spaces of triples $T=(V_0,V_1,\phi)$ 
consisting of vector bundles $V_i$, $i=0,1$, on $C$ and a morphism $\phi:V_0\to V_1$. 

For a real parameter $\sigma$, the $\sigma$-degree and the $\sigma$-slope of $T$ are defined by
\[
\deg_\sigma(T) =\deg~V_1+\deg~V_0+\sigma\cdot \rk(V_0), \ \ 
\mu_\sigma(T)=\frac{\deg_\sigma~T}{\rk~V_1+\rk~V_0}.
\]
A triple $T$ is called \emph{$\sigma$-stable} if for any proper subtriple $T^\prime\subset T$ on has $\mu_\sigma(T^\prime)<\mu_\sigma(T)$. We denote the moduli stack of $\sigma$-stable triples by $\cM_\sigma$. It was proved in \cite{BG96} that (the coarse moduli space of) $\cM_\sigma$ are smooth projective varieties for all $\sigma$.

It is clear that $\cM_\sigma$ is an open substack of $\dCx(C)$. Let $d$ and $r$ be positive integers such that $r<d$ and $gcd(r,d)=1$. Now we restrict to the special case:
\begin{enumerate}
\item[$\bullet$] $V_0=\cO_C$, $V_1$ is a rank $r+1$ vector bundle and $\sigma=2d/r$.
\end{enumerate}

From \cite[Sec.\ 3]{Pol98} we know that in this case stable triples are precisely those for which $\phi: \cO_C\to V_1$ is an embedding of a subbundle and
$V_1/\phi(\cO_C)$ is a stable vector bundle (of rank $r$ and degree $d$). Let us denote this moduli space by $\cN_{r+1,d}$, and let $\cU_{r,d}$ be the moduli stack of stable vector bundles of rank $r$ and degree $d$ on $C$. We have a map $q: \cN_{r+1,d}\to \cU_{r,d}$ sending $\{\xymatrix{\cO_C\ar[r]^\phi & V_1}\}$ to $V_1/\phi(\cO_C)$. It is easy to check that $q$ is a smooth map. 
We denote by $\cN_\xi$ the fiber of $q$ over $\xi\in \cU_{r,d}$. 
Since $\cN_\xi$ is obtained by fixing the first term of a complex, together with its quasi-isomorphism class,
by Corollary \ref{Casi}, $\cN_\xi$ carries a $0$-shifted Poisson structure.

Note that $\cN_\xi$ is a $\GG_m$-gerbe over its coarse moduli space $N_\xi$ which is isomorphic to 
$\PP(\Ext^1(\xi_{r,d},\cO_C))$. Similarly to Proposition \ref{Poicoarse} we see that 
the classical shadow of the $0$-shifted Poisson structure on $\cN_\xi$ descends to $N_\xi$.

\begin{theorem}\label{stable-triples-thm}
The classical shadow of the $0$-shifted Poisson structure on $\cN_\xi$ coincides with the Poisson structure defined in \cite{Pol98}.
\end{theorem}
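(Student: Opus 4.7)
The plan is to carry out an explicit computation of the classical shadow $H^0(\Pi_h)$ at a stable triple $T=(\cO_C\xrightarrow{\phi}V_1)$, following the template established in the proof of Theorem \ref{Thmapp1}, and then match the resulting bivector with the formula from \cite{Pol98}.

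First, I would identify the tangent complex of $\cN_\xi$ at $T$. The stack $\cN_\xi$ is obtained from (an open substack of) $\dCx(C)$ by fixing $V_0=\cO_C$, which is a stacky point in $\dVt^{\{0\}}(C)$, and fixing the quasi-isomorphism class, which is the stacky point $x_\xi$ in $\RR\Perf(C)$ corresponding to $\xi=V_1/\phi(\cO_C)$. This is exactly an $F_{x,y^S}$-type fiber in the sense of Corollary \ref{Casi} with $S=\{0\}$, and the Lagrangian structure on the composition $\cN_\xi\to \dVt^{\{1\}}(C)$ supplies the $0$-shifted Poisson structure. Taking the long exact sequence associated to the homotopy fiber product and working in the \v{C}ech model from Theorem \ref{Thmapp1}, one sees that $\TT_{\cN_\xi,T}$ is expressed in low degrees in terms of $\Ext^i$-groups between $\cO_C$, $V_1$, and $\xi$, and in particular $H^0(\TT_{\cN_\xi,T})$ matches the tangent space to the $\GG_m$-gerbe $\cN_\xi\to \PP(\Ext^1(\xi,\cO_C))$ at $[T]$.

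Next, the universal formula $g_T^*\Pi_h=\ad\circ\ft$ established in the proof of Theorem \ref{Thmapp1} applies verbatim, since the derivation there used only the general structure of the Lagrangian in $\RR\Perf\times\dVt^{\gr}$ and the commutative square relating $\fp^+$, $\fg$, $\fg^0$. After passing to cohomology of the relevant \v{C}ech complexes and using Serre duality on $C$ with the trivialization $\omega_C\simeq \cO_C$, the composition $\ad\circ\ft$ becomes, up to sign, the Serre-dualized Yoneda cup product with the extension class $[T]\in \Ext^1(\xi,\cO_C)$ defined by the triple. This is precisely the type of pairing entering the Poisson bivector in \cite{Pol98}.

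The comparison then reduces to matching the two explicit formulas. The construction in \cite{Pol98} is phrased in classical Poisson geometry and defines the bivector on $\PP(\Ext^1(\xi,\cO_C))$ from exactly the same ingredients: Serre duality on $C$ and Yoneda composition against $[T]$. Since both bivectors are natural in $T$ and built from the same universal operations, their agreement reduces to a direct verification at a single point. The main obstacle is bookkeeping: keeping track of signs (the alternating signs in $\ft$ coming from $(-1)^i$-twisted Serre duality), of the descent from the $\GG_m$-gerbe $\cN_\xi$ to its coarse moduli $N_\xi\cong \PP(\Ext^1(\xi,\cO_C))$ (as in Proposition \ref{Poicoarse}), and of the identification between the two different languages---derived stacks and Lagrangian structures on one side versus classical Poisson geometry with symplectic leaves of $q:\cN_{r+1,d}\to \cU_{r,d}$ on the other.
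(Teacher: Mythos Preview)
Your approach is correct and essentially the same as the paper's: the key point is that the computation $g_T^*\Pi_h=\ad\circ\ft$ from the proof of Theorem \ref{Thmapp1} applies verbatim, and this chain map is exactly the one used in \cite[Sec.\ 6]{Pol98} to define the Poisson structure. The paper's proof is just this two-line observation; your additional scaffolding (identifying the tangent complex, interpreting $\ad\circ\ft$ via Serre duality and Yoneda products, discussing descent to the coarse moduli) is not wrong but is more than the paper needs, since the match with \cite{Pol98} is literally at the level of the chain map formula rather than requiring a separate cohomological reinterpretation.
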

\begin{proof}
In the proof of Theorem \ref{Thmapp1} we have calculated that the morphism $\Pi_h$ associated with our $0$-shifted Poisson structure is equal to $\ad\circ \ft$.
This is precisely the chain map that induces the Poisson structure in \cite{Pol98} (see \cite[Sec.\ 6]{Pol98}).
\end{proof}

\subsection{Semi-classical limit of Feigin-Odesskii algebras}\label{FO-sec}

Now we are going to recall the definition of the elliptic algebras due to Feigin and Odesskii \cite{FO87}. We will show that in the case when $\xi$ is a line bundle, the Poisson structure on $N_\xi$ coincides with the classical limit of a class of elliptic algebras.

Let $C\cong\CC/\Gamma$, where $\Gamma=\ZZ+\ZZ\tau$ and $\eta\in\CC$. 
We denote the group of $n$-torsion points of $C$ by 
\[
\Gamma_n = \left(\frac{1}{n}\Gamma\right)/\Gamma = \left\{\frac{a_1}{n} + \frac{a_2}{n} \tau + \Gamma: \;\; a_1, a_2 \in \ZZ\right\} \subset C,
\]
 
Consider the function
\begin{equation}\label{eq-delta}
\zeta(z)= -e^{-2 \pi \ii(z-b)}, \hs z \in \CC,
\end{equation}
where $b \in \CC$ is such that $b\equiv\frac{(n-1)~\tau}{2n}+ \frac{c}{n}\mod \frac{1}{n}\ZZ$. 
The $\GG_m$-multiplier
\[
e_1(z):=1 ~ \mbox{and} ~ e_{\tau/n}(z):=\zeta(z)
\] defines a line bundle $L_{\zeta}$ on $\CC/\ZZ+\ZZ\frac{\tau}{n}$ of degree $1$. Let us denote its pull back to $C$ by $L_{n,c}$.

For positive integers $n$ and $k$ such that $0<k<n$ and $\gcd(n,k)=1$,
Feigin and Odesskii defined in \cite{FO87} a family of quadratic algebras $Q_{n,k}(C,\eta)$ over $\CC$. Here $\eta$
is a complex parameter, which we should view as defining a point on the elliptic curve $C$.
The degree 1 piece of $Q_{n,k}(C,\eta)$ is the space $\Theta_{n,c}$ of global holomorphic sections of $L_{n,c}$. 
By definition, $\Theta_{n, c}$  consists of
holomorphic functions $f$ on $\CC$ satisfying
\[
f(z+1) = f(z), \hs f(z+\tau) = (-1)^{n} e^{-2 \pi \ii (nz-c)} f(z), \hs z \in \CC.
\]

Let $H_n$ denote the Heisenberg group of order $n^3$, which is the
group with generators $h_1, h_2, \epsilon$ and relations
\[
h_1h_2=\ep h_2h_1,\hs h_1\ep=\ep h_1, \hs h_2\ep=\ep h_2, \hs  h_1^n=h_2^n=\ep^n=1.
\]
Define two operators $T_{1/n}$ and $T_{\tau/n}$ on the space of $\CC$-valued functions on $\CC$ by 
\[
(T_{1/n}~f)(z)=f(z+\frac{1}{n}),\hs (T_{\tau/n}~f)(z)=\zeta(z)^{-1}~f(z+\frac{\tau}{n}).
\]
It is easy to check that $\Theta_{n,c}$ is invariant under
$T_{1/n}$ and $T_{\tau/n}$, and that  
\[
T_{1/n}^n=T_{\tau/n}^n=1 \hs \mbox{and} \hs T_{1/n}T_{\tau/n}=e^{\frac{2\pi\ii}{n}}~T_{\tau/n}T_{1/n}.
\]
Thus, the assignment 
\begin{equation}\label{eq-Hn-Theta}
h_1\longmapsto T_{1/n}, \hs h_2\longmapsto T_{\tau/n}
\end{equation}
defines a representation of $H_{n}$ on $\Theta_{n, c}$, in which 
$\epsilon \in H_{n}$ acts by the scalar multiplication by $\omega=\omega_{n}:= e^{\frac{2\pi\ii}{n}}$.

Note that the theta function
\[
\theta(z)=\sum_{n\in\ZZ} (-1)^n\cdot e^{2\pi \ii (nz+\frac{n(n-1)}{2}\tau)}
\] is an element in $\Theta_{1,0}$. It is easy to check that 
\[
\theta_\alpha(z):=\theta(z+\frac{\alpha}{n}\tau)\theta(z+\frac{1}{n}+\frac{\alpha}{n}\tau)\ldots\theta(z+\frac{n-1}{n}+\frac{\alpha}{n}\tau)\cdot e^{2\pi\ii(\alpha z+\frac{\alpha(\alpha-n)}{2n}+\frac{\alpha}{2n})}
\] form a canonical basis for $\Theta_{n,\frac{n-1}{2}}$. We will use the following properties of functions $\theta_\alpha$ (see Appendix of \cite{Od03}):
\begin{enumerate}
\item[(1)] $\theta_\alpha(z+\frac{1}{n})=e^{2\pi\ii\alpha/n} \theta_\alpha(z)$;
\item[(2)] $\theta_\alpha(z+\frac{\tau}{n})=e^{-2\pi\ii(z+\frac{1}{2n}-\frac{n-1}{2n}\tau)} \theta_{\alpha+1}(z)$;
\item[(3)] $\theta_{-\alpha}(-z)=-e^{-2\pi\ii\alpha/n}e^{-2\pi\ii nz}\theta_\alpha(z)$.
\end{enumerate}
The first two properties are equivalent to the following formulas for the $H_n$-action:
\begin{align}\label{automorphy-theta}
T_{1/n}\theta_\alpha=e^{\frac{2\pi\ii\alpha}{n}}~\theta_\alpha, \hs T_{\tau/n}\theta_\alpha=\theta_{\alpha+1}, \hs \alpha 
\in \ZZ/n\ZZ. 
\end{align}

The \emph{Feigin-Odesskii algebra} $Q_{n,k}(C,\eta)$ is defined to be the quotient of the free algebra $\CC\lg x_i: i\in\ZZ/n\ZZ\rg$ by the quadratic relations
\begin{align}\label{FOrelation}
\sum_{r\in\ZZ/n\ZZ} \frac{\theta_{j-i+r(k-1)}(0)}{\theta_{kr}(\eta)\theta_{j-i-r}(-\eta)}x_{j-r}x_{i+r}.
\end{align}
In the limit as $\eta\to 0$, we get the polynomial algebra $\CC[x_i:i\in\ZZ/n\ZZ]$.
The \emph{semi-classical limit} of $Q_{n,k}(C,\eta)$, denoted by $q_{n,k}$, is this polynomial algebra equipped with the Poisson bracket $\{x_i,x_j\}:=\lim_{\eta\to 0}\frac{[x_i,x_j]}{\eta}$. It follows from the relations \eqref{FOrelation} that for $i\neq j$,
\begin{align}\label{FObracket}
\{x_i,x_j\}=\left(\frac{\theta^\prime_{j-i}(0)}{\theta_{j-i}(0)}+\frac{\theta^\prime_{k(j-i)}(0)}{\theta_{k(j-i)}(0)}-2\pi\ii n\right)x_ix_j+
\sum_{r\neq 0,j-i} \frac{\theta_{j-i+r(k-1)}(0)\theta^\prime_0(0)}{\theta_{kr}(0)\theta_{j-i-r}(0)} x_{j-r}x_{i+r}.
\end{align}
Because the bracket is quadratic, it defines a Poisson structure on the projective space $\PP^{n-1}$. 
We call this bracket the \emph{Sklyanin bracket}.

\begin{theorem}\label{qn1=ext}
Let $\xi$ be the line bundle $L_{n,\frac{n+1}{2}}$ defined above.
There is an isomorphism of Poisson varieties between $(N_\xi,H^0(\Pi_h))$ and $\PP^{n-1}$ equipped with the Sklyanin bracket coming from $q_{n,1}$.
\end{theorem}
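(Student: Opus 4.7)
The plan is to produce homogeneous coordinates on $N_\xi \cong \PP^{n-1}$ coming from a distinguished theta-function basis, then to compute the Poisson bracket provided by Theorem \ref{stable-triples-thm} (equivalently, that of \cite{Pol98}) in these coordinates, and finally to match it term-by-term with the Sklyanin bracket \eqref{FObracket} specialized to $k=1$. First I would fix the identification $N_\xi \cong \PP^{n-1}$. Since $\deg \xi = n$, Serre duality gives $\Ext^1(\xi, \cO_C) \cong H^0(C,\xi)^\vee$ of dimension $n$. The choice $c = (n+1)/2$ differs from $(n-1)/2$ by a translation of the elliptic curve by $-1/n$, so pulling back the canonical basis $\{\theta_\alpha\}$ of $\Theta_{n,(n-1)/2}$ produces an analogous basis of $\Theta_{n,(n+1)/2}$ on which $H_n$ acts by the same automorphy rule \eqref{automorphy-theta}. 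Taking $\{x_\alpha\}_{\alpha \in \ZZ/n\ZZ}$ as the dual basis on $\Ext^1(\xi,\cO_C)$ yields the homogeneous coordinates used on the Feigin-Odesskii side.

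Next I would exploit $H_n$-equivariance to rigidify the bracket. The Heisenberg group $H_n$ acts on $\Theta_{n,(n+1)/2}$ and dually on $\Ext^1(\xi,\cO_C)$, preserving $\xi$ up to isomorphism, hence induces a (projective-)linear action on $N_\xi$. The bracket from \cite{Pol98} arises by naturality of the derived construction in Theorem \ref{stable-triples-thm}, so it is $H_n$-invariant; the Sklyanin bracket \eqref{FObracket} is $H_n$-invariant by direct inspection. Decomposing the space of quadratic bivectors on $V = \Ext^1(\xi,\cO_C)$ under the $H_n\times H_n$-action (using that $\Theta_{n,c}$ is an irreducible Schr\"odinger representation) reduces the comparison to matching one scalar in each weight component indexed by $(\ZZ/n\ZZ)^2$, and these components match the terms appearing on the right-hand side of \eqref{FObracket}.

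Finally I would match the structure constants. Using the formula $\ad\circ \ft$ from the proof of Theorem \ref{Thmapp1}, the bracket on $N_\xi$ reduces to a Yoneda composition of $\Ext^1$-classes with the Serre duality pairing, which on the elliptic curve is a residue at the base point. Expanding this residue in terms of $\theta_\alpha(0)$ and $\theta'_\alpha(0)$ using the three-term theta identity (or an equivalent Fay trisecant identity) produces expressions of exactly the shape of the coefficients in \eqref{FObracket} with $k=1$. The main obstacle is precisely this step: while Heisenberg equivariance guarantees that the two brackets are proportional within each weight piece, the actual verification that the constants of proportionality agree across all weights requires the explicit theta identities, and these must be normalized carefully with respect to the Serre duality pairing implicit in $\ft$. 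Once this verification is done for the diagonal coefficient (the term multiplying $x_i x_j$), equivariance propagates the identification to every off-diagonal term and yields the claimed isomorphism of Poisson varieties.
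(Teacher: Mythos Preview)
Your overall strategy aligns with the paper's: pass to theta-function coordinates, invoke Heisenberg equivariance to constrain the shape of the bracket, and then match structure constants. But there is a genuine gap in your final step. You write that ``once this verification is done for the diagonal coefficient (the term multiplying $x_ix_j$), equivariance propagates the identification to every off-diagonal term.'' This is not correct. Heisenberg invariance forces any lift of the bracket into the form $\{x_i,x_j\}=\sum_r C(r,j-i-r)\,x_{i+r}x_{j-r}$ with constants $C(\alpha,\beta)$ depending only on $(\alpha,\beta)\in(\ZZ/n\ZZ)^2$ and satisfying $C(\alpha,\beta)=C(\beta,\alpha)=-C(-\alpha,-\beta)$; but $H_n$-invariance does \emph{not} relate the values $C(\alpha,\beta)$ for different weight pairs. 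The space of $H_n$-invariant quadratic bivectors is far from one-dimensional, so knowing the coefficients $C(0,\gamma)$ (your ``diagonal'' terms) says nothing about the generic $C(\alpha,\beta)$. Every weight piece must be checked. The paper does exactly this: it extracts $C(\alpha,\beta)_M=F(\alpha,\beta)$ for $\alpha,\beta,\alpha+\beta\neq 0$ from one coefficient comparison, and then handles the delicate $C(0,\cdot)$ cases by a separate residue computation.

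There is also a more practical gap. You jump from the chain-level description $\ad\circ\ft$ directly to ``a residue at the base point,'' but the passage from that map of complexes to an explicit bivector on $\PP\Ext^1(\xi,\cO_C)$ is where most of the work lies. The paper carries this out via a Cech computation on the cover $(C\setminus D,\ \text{formal discs at }D)$, derives the formula $\pi_t(\phi)=(\psi_t\phi-2P_+(\psi_t\phi))\psi_t$ in terms of a projection $P_+$ onto regular Laurent tails, and then proves a lemma evaluating $P_+(\psi_j\phi_i)$ in closed form via theta functions. Only after this does the comparison of the $C(\alpha,\beta)$ become a concrete theta identity. Your proposal gestures at a three-term or Fay identity, but without the Cech/$P_+$ formula there is nothing on the moduli side to which such an identity can be applied.
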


The proof of Theorem \ref{qn1=ext} will take up the rest of this section.

For brevity we denote the Poisson bivector  $H^0(\Pi_h)$ on $N_\xi$ by $\pi$.
Let $(U_+,U_-)$ be an open affine covering of $C$. We will compute the Poisson bracket associated to $\pi$ using the Cech complex and directly compare the result with the Sklyanin bracket \eqref{FObracket}.

Let $t$ be a class in $\Ext^1(\xi,\cO_C)$ and let
\[
\xymatrix{
0\ar[r] & \cO_C\ar[r]^{s} & V_t\ar[r]^{a} &\xi\ar[r] & 0
}
\] be the corresponding extension. By an abuse of notation, we use the same symbol $t$ to denote the corresponding point in $N_\xi$.
Let $\sEnd(V_t,\cO_C)$ denote the sheaf of endomorphisms of $V_t$ preserving $\cO_C$. 
We have an identification of the tangent space at $t$ to the moduli space of triples with $H^1(C,\sEnd(V_t,\cO_C))$ (see
\cite[Lem.\ 3.1]{Pol98}), and the morphism
$\Pi_h$ inducing the Poisson structure can be viewed as a morphism 
$\sEnd(V_t,\cO_C)^\vee\to \sEnd(V_t,\cO_C)[-1]$ in the derived category, represented by the chain map
\[
\xymatrix{
 \xi^\vee \ar[r]^{-d^*} \ar[d]^{d^*} & \sEnd(V_t)\ar[d]^{0}\\
\sEnd(V_t)\ar[r]^d& \xi
}
\]
where $d(A)=a\circ A\circ s$, $d^*(\psi)=s\circ\psi\circ a$.
The natural exact sequence
\[
\xymatrix{
0\ar[r] &\xi^\vee\ar[r] & \sEnd(V_t,\cO_C)\ar[r] & \sEnd~\xi\oplus\cO_C\ar[r] &0
}
\]

leads to the following identification of the tangent space $T_t N_\xi$:
\begin{align*}
T_t N_\xi &\simeq \ker\Bigg(H^1(C,\sEnd(V_t,\cO_C))\to H^1(C,\sEnd~\xi)\oplus H^1(C,\cO_C)\Bigg)\\
&\simeq \Cok\Bigg(\End(\xi)\oplus H^0(C,\cO_C)\to H^1(C,\xi^\vee)\Bigg)\\
&\simeq\Cok\Bigg(t_*:H^0(C,\cO_C)\to H^1(C,\xi^\vee)\Bigg)
\end{align*}
where the last equality is due to the fact that the maps $\End(\xi)\to H^1(C,\xi^\vee)$ and $H^0(C,\cO_C)\to H^1(C,\xi^\vee)$, induced by the extension class $t$, differ only by sign.
Dually, the cotangent space $T^*_t N_\xi$ is isomorphic to 
\[
\ker\Bigg(t^*:H^0(C,\xi)\to H^1(C,\cO_C)\Bigg).
\]
We will give a formula for the map $\pi_t:T^*_t N_\xi\to T_t N_\xi$ on $\phi\in \ker(t^*)\subset H^0(C,\xi)$.

We need to lift $\phi\in H^0(C,\xi)$ to an element of the hypercohomology $\HH^1(\xi^\vee\to \sEnd~V_t)$.
Such a lifting is represented by a Cech $1$-cocycle $(\psi_\pm;A_+,A_-)$, where $A_+\in \sEnd~V_t(U_+)$,
$A_-\in \sEnd~V_t(U_-)$, $\psi_\pm\in \xi^\vee(U_+\cap U_-)$, and 
\begin{equation}\label{hypercoh-coc-A-eq}
-s\psi_\pm a=A_+-A_-
\end{equation}
over $U_+\cap U_-$.
The class of $(\psi_\pm;A_+,A_-)$ lifts $\phi$ if 
$$aA_+s=aA_-s=\phi.$$ 
Let $(t_+,t_-,b_+,b_-)$ be local splittings (defined over $U_+$ and $U_-$) of the short exact sequence
\[
\xymatrix{
0\ar[r]&\cO_C\ar[r]^{s} & V_t\ar[r]^a\ar@/^1pc/[l]_{t_\pm} & \xi\ar@/^1pc/[l]_{b_\pm}\ar[r]&0
}
\]
where $s\circ t_\pm+b_\pm\circ a=\id_{V_t}$.
Note that we have 
$$b_+-b_-=s\psi_t$$ 
for some $\psi_t\in \xi^\vee(U_+\cap U_-)$ representing the class $t\in H^1(\xi^\vee)$. It is easy to see that then
$$t_+-t_-=-\psi_t a.$$
The function
$$f:=(\psi_t,\phi)\in \cO(U_+\cap U_-)$$
represents the class $t^*(\phi)=0$, where $(~,~)$ is the natural pairing between $\xi$ and $\xi^\vee$. Hence, there exists $f_+\in \cO(U_+)$ and $f_-\in \cO(U_-)$ such that
$$f=f_+-f_-.$$ 
Let us set $\wt{A}_+=b_+\phi t_+$ and $\wt{A}_-=b_-\phi t_-$. 
Then we have 
$$\wt{A}_+-\wt{A}_-=b_+\phi t_+-b_-\phi t_-.$$
Hence,
$$a(\wt{A}_+-\wt{A}_-)=-\phi\psi_t a=-fa, \ \ (\wt{A}_+-\wt{A}_-)s=s\psi_t\phi=sf.$$
This gives the way to correct $\wt{A}_+$ and $\wt{A}_-$: setting 
$$A_+=\wt{A}_+ + f_+b_+a - f_+st_+, \ \ A_-=\wt{A}_- + f_-b_-a - f_-st_-,$$
we now check that 
\begin{align*}
&A_+-A_-=(b_-+s\psi_t)\phi(t_- - \psi_ta) -b_-\phi t_-
+(f_- +f)(b_- +s\psi_t)a-f_-b_-a \\
& -(f_- +f)s(t_--\psi_ta)+f_-st_-
=s(2f_-+f)\psi_ta.
\end{align*}
Hence, \eqref{hypercoh-coc-A-eq} holds with
$$\psi_\pm=-(2f_-+f)\psi_t.$$
Note that the image of $(\psi_\pm;A_+,A_-)$ in $\HH^1(\sEnd~V_t\to \xi)$ is the class $(s\psi_\pm a;0,0)$.
Hence, $\pi_t(\phi)\in\Cok(t_*)$ is represented by  the Cech $1$-cocycle $\psi_\pm$.
So we get the formula for $\pi_t$:
\begin{align}\label{poisson1.2}
\pi_t(\phi)=-(2f_-+f)\psi_t.
\end{align}
The kernel of $\cO_C(U_+\cap U_-)\to H^1(\cO_C)$, denoted by $\cO_C(U_+\cap U_-)^0$  is the subspace of functions with zero residue. Let $P_-$ (resp. $P_+$) be the projection $\cO_C(U_+\cap U_-)^0\to \xi(U_-)\cong\cO_C(U_-)$ (resp. $\cO_C(U_+\cap U_-)^0\to\xi^\vee(U_+)\cong\cO(U_+)$ ). The projection is well defined up to the addition of a constant. We may set $f_-= P_-(f)$ and $f_+=P_+(f)$. Then formula \eqref{poisson1.2} can be rewritten as
\begin{align}\label{poisson1.3}
\pi_t(\phi)=-(2P_-(\psi_t\phi)+\psi_t\phi)\psi_t=(\psi_t\phi-2P_+(\psi_t\phi))\psi_t.
\end{align}
For a different choice of constant in the definition of $P_-$ (or $P_+$) the formula will differ by a constant multiple of $\psi_t$, therefore defines the same Poisson structure on $N_\xi$.

Now set $\xi=L_{n,\frac{n+1}{2}}$.
As an open covering of $C$ we take $(U_+,U_-)$, where $U_+=C\setminus D$ with $D:=\{\frac{i}{n}:i=0,\ldots,n-1\}$, and 
$U_-$ is the union of formal discs centered at
$z=\frac{i}{n}$ for $i=0,1,\ldots,n-1$. Note that $D$ is precisely the zero divisor of $\theta_0$, which is a section
of $L_{n,\frac{n+1}{2}}$. Below we always use an isomorphism 
\[ 
\xymatrix{
L_{n,\frac{n+1}{2}}\ar[r]^{\sim} & \cO(D): s\mapsto \frac{s}{\theta_0}.
}
\]
Under this isomorphism the basis $(\theta_\alpha)$ of $H^0(L_{n,\frac{n+1}{2}})$ maps to the functions
$$\phi_\alpha:=\frac{\theta_\alpha}{\theta_0}.$$

An element $g\in \cO(U_+\cap U_-)$ is a vector of Laurent series $(g_i)_{i=0}^{n-1}$ with $g_i\in \CC((z))$. We define a bilinear form on $\cO(U_+\cap U_-)$ by 
\[
\lg f,g\rg:=\tr(fg), \ \text{ where } \tr(f):=\frac{1}{n}\sum_{i=0}^{n-1}\Res_{z=\frac{i}{n}} f dz.
\]
Fixing the standard $1$-form 
$dz$ on $C$, an element $\psi\in H^1(\cO(-D))$ can be represented by a vector in $\cO(U_+\cap U_-)$.
The induced pairing between $\phi\in\cO(U_+)$ and $\psi\in\cO(U_+\cap U_-)$, given by
$$\lg \phi,\psi\rg:=\lg \phi|_{U_+\cap U_-},\psi\rg$$
descends to a perfect pairing between $H^0(\cO(D))$ and $H^1(\cO(-D))$.

Let us define elements $\psi_\alpha\in H^1(\cO(-D))$, for $\alpha\in\ZZ/n\ZZ$, by
$$\psi_\alpha:=(\frac{\theta_0^\prime(0)}{\theta_\alpha(i/n)})_i \ \text{ for }\ \alpha\in\ZZ/n\ZZ\setminus 0, \
\ \psi_0:=(\frac{1}{z-i/n})_i.$$  
It is easy to see that $(\psi_\alpha)_{\alpha\in\ZZ/n\ZZ}$ is a basis for $H^1(\cO(-D))$. 
Furthermore, this basis is dual to the basis $(\phi_\alpha)$ of $H^0(\cO(D))$ with respect to the above pairing. 
Indeed, for $\beta\neq 0$ we have 
\begin{align*}
\lg \phi_\alpha,\psi_\beta\rg&=\frac{1}{n}\sum_{k=0}^{n-1} \Res_{z=0} \frac{\theta_\alpha(z+k/n)\theta_0^\prime(0)}{\theta_0(z+k/n)\theta_\beta(k/n)}\\
&=\frac{1}{n}\sum_{k=0}^{n-1} \omega^{k(\alpha-\beta)} \Res_{z=0} \frac{\theta_\alpha(z)\theta_0^\prime(0)}{\theta_0(z)\theta_\beta(0)}=\delta_{\alpha\beta},\\
\end{align*}
where  $\omega=e^{2\pi\ii/n}$. Similarly,
$$\lg \phi_\alpha,\psi_0\rg=\frac{1}{n}\sum_{k=0}^{n-1} \Res_{z=0} \frac{\theta_\alpha(z+k/n)}{\theta_0(z+k/n)z}
=\frac{1}{n}\sum_{k=0}^{n-1} \omega^{k\alpha} \Res_{z=0} \frac{\theta_\alpha(z)}{\theta_0(z)z}=\delta_{\alpha 0}.$$

Let $(x_0,\ldots,x_{n-1})$ be the coordinates on $H^1(\cO(-D))$ corresponding to the basis $(\psi_\alpha)$. 
We are going to write a formula for our Poisson bracket on the open subset $x_0\neq 0$ of the projective space
$\PP H^1(\cO(-D))$, in terms of the standard coordinates $t_1,\ldots,t_{n-1}$, where $t_i=x_i/x_0$. 
We also set $t_0=1$.
Below we always identify the indices with elements of $\ZZ/n\ZZ$.
Let $\psi_t=\sum_{c\in\ZZ/n\ZZ} t_c \psi_c$ be an element in $H^1(\cO(-D))$ over this open subset.
Then the differentials of $t_i$, $i=1,\ldots,n-1$ correspond to the basis $(\phi_i-t_i\phi_0)$ of the cotangent space to $\psi_t$.
Thus, from \eqref{poisson1.3} we get for $i\neq 0$, $j\neq 0$,
$$\{t_i,t_j\}=\tr\bigl(\bigl[\psi_t(\phi_i-t_i\phi_0)-2P_+[\psi_t(\phi_i-t_i\phi_0)]\bigr]\cdot \psi_t(\phi_j-t_j\phi_0)\bigr).$$
Using the fact that $\{t_i,t_j\}$ is skew-symmetric we can rewrite this as follows:
\begin{equation}\label{ti-tj-Pplus-bracket}
\{t_i,t_j\}=\tr\bigl(P_+[\psi_t(\phi_j-t_j)]\cdot\psi_t(\phi_i-t_i)-P_+[\psi_t(\phi_i-t_i)]\cdot\psi_t(\phi_j-t_j)\bigr).
\end{equation}
To rewrite this further we need an explicit formula for $P_+(\psi_\alpha\phi_\beta)$.

\begin{lemma}\label{Pplus}
For $i\neq 0$, $j\neq 0$ and $i\neq j$, one has
\begin{equation}\label{P+psi-j-phi-i-eq}
P_+(\psi_j\phi_i)=(\frac{\theta^\prime_0(0)\theta_i(0)}{\theta_j(0)\theta_{i-j}(0)})\cdot\frac{\theta_{i-j}(z)}{\theta_0(z)}.
\end{equation}
For $j\neq 0$ one has 
$$P_+(\psi_j\phi_0)=0.$$
Also, for $i\neq 0$, one has
\begin{equation}\label{P+psi-0-phi-i-eq}
P_+(\psi_0\phi_i)=-(\frac{\theta_i(z)}{\theta_0(z)})^\prime+
\left[\frac{\theta^\prime_i(0)}{\theta_i(0)}-\pi i n\right]\cdot
\frac{\theta_i(z)}{\theta_0(z)}. 
\end{equation}
Finally, for any linear combination $\sum c_i\psi_i\phi_i$ with $\sum_i c_i=0$, one has
$$P_+(\sum c_i\psi_i\phi_i)=0.$$
\end{lemma}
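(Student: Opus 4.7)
The plan is to verify each formula by matching principal parts at every point of $D$. For each of the products $\psi_j\phi_i$, $\psi_j\phi_0$, $\psi_0\phi_i$ and $\sum_i c_i\psi_i\phi_i$, I would compute the Laurent expansion at $k/n$ in the local coordinate $u=z-k/n$, using the quasi-periodicity $\theta_\alpha(k/n+u)=\omega^{k\alpha}\theta_\alpha(u)$ (with $\omega=e^{2\pi\ii/n}$) that follows from property (1). A candidate right-hand side is a meromorphic function on $C$ whose only poles lie on $D$ and whose principal part at each $k/n$ matches that of the left-hand side; once such a candidate is exhibited, the difference lies in $\cO(U_-)$ and exhibits the desired splitting, so the candidate is a valid $P_+$ modulo the global additive constant that the statement allows.

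For cases (1) and (2) everything reduces to a short residue computation. In case (1) the product has a simple pole at each $k/n$ with residue $\omega^{k(i-j)}\theta_i(0)/\theta_j(0)$, and matching it against the residue of $C\cdot\theta_{i-j}(z)/\theta_0(z)$ forces $C=\theta_0'(0)\theta_i(0)/(\theta_j(0)\theta_{i-j}(0))$, as claimed. In case (2), $\psi_j\phi_0=\psi_j$ is a constant on each disc, hence already lies in $\cO(U_-)$, and $P_+=0$ is a valid choice.

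For case (3), $\psi_0\phi_i$ has a double pole at each $k/n$. Expanding $\theta_i(u)/\theta_0(u)$ to order $u$ and multiplying by $u^{-1}$, the principal part of $\psi_0\phi_i|_k$ becomes
\[
\omega^{ki}\frac{\theta_i(0)}{\theta_0'(0)}\left[u^{-2}+\left(\frac{\theta_i'(0)}{\theta_i(0)}-\frac{\theta_0''(0)}{2\theta_0'(0)}\right)u^{-1}\right],
\]
while the candidate $-(\theta_i/\theta_0)'+c\,(\theta_i/\theta_0)$ has principal part of the same shape with $c$ replacing the bracketed combination. Thus the problem reduces to the identity $\theta_0''(0)/(2\theta_0'(0))=\pi\ii n$, which is the main obstacle. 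I would establish it as follows. Writing $\theta_0(z)=\prod_{j=0}^{n-1}\theta(z+j/n)$ and computing $\theta_0'(0)$, $\theta_0''(0)$ by the product rule (only terms that do not involve the vanishing factor $\theta(0)$ survive) gives
\[
\frac{\theta_0''(0)}{2\theta_0'(0)}=\frac{\theta''(0)}{2\theta'(0)}+\sum_{j=1}^{n-1}\frac{\theta'(j/n)}{\theta(j/n)}.
\]
A direct reindexing of the defining series shows $\theta(-z)=-e^{-2\pi\ii z}\theta(z)$; differentiating this twice at $0$ yields $\theta''(0)=2\pi\ii\,\theta'(0)$, and its logarithmic derivative gives $G(z)+G(-z)=2\pi\ii$ for $G=\theta'/\theta$. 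Pairing $j$ with $n-j$ in the sum (using periodicity $G(1-j/n)=G(-j/n)$) then produces $\sum_{j=1}^{n-1}G(j/n)=(n-1)\pi\ii$, so the total is $\pi\ii+(n-1)\pi\ii=n\pi\ii$, as required.

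Finally, for case (4) the local computation yields the uniform leading behaviour $(\psi_i\phi_i)|_k=u^{-1}+O(1)$ for every $i\in\ZZ/n\ZZ$ and every $k$: for $i=0$ it is immediate, and for $i\neq 0$ the factors $\omega^{-ki}$ in $\psi_i$ and $\omega^{ki}$ in $\phi_i$ cancel before extracting the residue of $\theta_i/\theta_0$. Under the hypothesis $\sum_i c_i=0$, the principal parts of $\sum_i c_i\psi_i\phi_i$ vanish at every $k/n$, so the sum already lies in $\cO(U_-)$ and $P_+$ can be taken to be zero.
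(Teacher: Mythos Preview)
Your proof is correct and follows essentially the same strategy as the paper: in each case you exhibit a candidate in $\cO(U_+)$ and verify that its principal part at every point of $D$ agrees with that of the given product, so the difference lies in $\cO(U_-)$. The residue computations in cases (1), (2), (4) match the paper's almost line for line.

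The one place where your argument diverges is the derivation of the identity $\theta_0''(0)/\theta_0'(0)=2\pi\ii n$ in case (3). The paper obtains this in one stroke from property (3) of the $\theta_\alpha$: setting $\alpha=0$ gives $\theta_0(-z)=-e^{-2\pi\ii nz}\theta_0(z)$, and differentiating twice at $z=0$ yields the identity immediately. You instead pass through the product formula $\theta_0(z)=\prod_{j=0}^{n-1}\theta(z+j/n)$, establish the functional equation $\theta(-z)=-e^{-2\pi\ii z}\theta(z)$ for the basic theta by reindexing, and then sum the logarithmic derivatives using the pairing $j\leftrightarrow n-j$. This works and is a nice self-contained derivation, but it is noticeably longer; since the paper already records property (3), you could shorten your write-up by invoking it directly for $\theta_0$ rather than rebuilding it from the product.
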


\begin{proof}
Recall that we have
$$\theta_j(z+\frac{i}{n})=\omega^{ij}\theta_j(z),$$
where $\omega=e^{2\pi\ii/n}$.
To prove \eqref{P+psi-j-phi-i-eq} we have to check that for $k=0,\ldots,n-1$, the function
$$\frac{\theta'_0(0)}{\theta_j(k/n)}\cdot \frac{\theta_i(z)}{\theta_0(z)}-
(\frac{\theta^\prime_0(0)\theta_i(0)}{\theta_j(0)\theta_{i-j}(0)})\cdot\frac{\theta_{i-j}(z)}{\theta_0(z)}$$
is regular near $z=k/n$. But this function at most has pole of order $1$ and its residue at $z=k/n$ is equal to
\begin{align*}
&\frac{\theta'_0(0)}{\theta_j(k/n)}\cdot \frac{\theta_i(k/n)}{\theta'_0(k/n)}-
(\frac{\theta^\prime_0(0)\theta_i(0)}{\theta_j(0)\theta_{i-j}(0)})\cdot\frac{\theta_{i-j}(k/n)}{\theta'_0(k/n)}=\\
&\frac{\theta'_0(0)}{\omega^{jk}\theta_j(0)}\cdot \frac{\omega^{ik}\theta_i(0)}{\theta'_0(k/n)}-
(\frac{\theta^\prime_0(0)\theta_i(0)}{\theta_j(0)})\cdot\frac{\omega^{(i-j)k}}{\theta'_0(k/n)}=0.
\end{align*}

The vanishing of $P_+(\psi_j\phi_0)=P_+(\psi_j)$ for $j\neq 0$ is clear since $\psi_j$ is regular on $U_-$.
To check \eqref{P+psi-0-phi-i-eq} we need to show that the difference between $\psi_0\phi_i$ and the right-hand side
is regular near each $z=k/n$. Since $\phi_i(z+k/n)=\omega^{ik}\phi_i(z)$, it is enough to consider $z=0$.
The Laurent expansion of $\phi_i$ near $z=0$ has form
$$\phi_i(z)=\frac{\theta_i(z)}{\theta_0(z)}=
\frac{\theta_i(0)}{\theta'_0(0)}\cdot \frac{1}{z}
\left[\frac{\theta'_i(0)}{\theta'_0(0)}-\frac{\theta_i(0)\theta''_0(0)}{2\theta'_0(0)^2}\right]+\ldots$$
Using property (3) of $\theta$-functions, one can check that 
$$\frac{\theta_0^{\prime\prime}(0)}{\theta_0^{\prime}(0)}=2\pi\ii n.$$
Hence, we can rewrite the above expansion as
\begin{equation}\label{phi-i-L-exp}
\phi_i(z)=\frac{\theta_i(0)}{\theta'_0(0)}\cdot \frac{1}{z}
+\left[\frac{\theta'_i(0)}{\theta'_0(0)}-\pi in\cdot \frac{\theta_i(0)}{\theta^\prime_0(0)}\right]+\ldots
\end{equation}
The expansion of $\psi_0\phi_i$ near $z=0$ is obtained from this by multiplication with $1/z$.

On the other hand, the right-hand side of \eqref{P+psi-0-phi-i-eq} has the expansion at $z=0$,
$$\frac{\theta_i(0)}{\theta'_0(0)}\cdot \frac{1}{z^2}+
\left[\frac{\theta^\prime_i(0)}{\theta_i(0)}-\pi \ii n\right]\cdot
\frac{\theta_i(0)}{\theta'_0(0)}\cdot \frac{1}{z}+\ldots.$$ 
It follows that we do get the same polar parts as for $\psi_0\phi_i$.

Finally, to prove the last property it is enough to check that for $i\neq 0$,
$$P_+(\psi_i\phi_i-\psi_0\phi_0)=P_+(\psi_i\phi_i-\psi_0)=0.$$
But $\psi_i\phi_i-\psi_0$ has at most pole of order $1$ and
$$\Res_{z=k/n}(\psi_i\phi_i)=\frac{\theta'_0(0)}{\theta_i(k/n)}\cdot \frac{\theta_i(k/n)}{\theta'_0(k/n)}=1=\Res_{z=k/n}\psi_0,$$
since $\theta'_0(k/n)=\theta'_0(0)$.
\end{proof}

For $\alpha,\beta\in\Z/n\Z$, such that $\alpha\neq 0$ and $\beta\neq 0$, let us set
\begin{equation}\label{F-def-eq}
F(\alpha,\beta):=\frac{\theta^\prime_0(0)\theta_{\alpha+\beta}(0)}{\theta_\alpha(0)\theta_\beta(0)}, \ \ \ 
F(0,\alpha)=F(\alpha,0):=\frac{\theta^\prime_\alpha(0)}{\theta_\alpha(0)}-\pi \ii n, \ \ \ F(0,0)=0.
\end{equation}
Then by Lemma \ref{Pplus}, with this notation
we have for $i\neq 0$,
\begin{align*}
&P_+[\psi_t(\phi_i-t_i\phi_0)]=\sum_{\alpha\neq i}t_\alpha P_+[\psi_\alpha\phi_i]-
t_i\sum_{\alpha\neq 0}t_\alpha P_+[\psi_\alpha\phi_0]+t_iP_+[\psi_i\phi_i-\psi_0\phi_0]\\
&=\sum_{\alpha\neq i}t_\alpha P_+[\psi_\alpha\phi_i]=\sum_{\alpha\neq i}t_\alpha F(\alpha,i-\alpha)\phi_{i-\alpha}-\phi^\prime_i.
\end{align*}

Next, we observe that the functional $\tr$ on $\cO(U_+\cap U_-)$ is invariant with respect to the action of the generator
$h_1$ of the Heisenberg group that acts by the shift by $1/n$. Since $h_1\theta_\alpha=\omega^\alpha\theta_\alpha$,
we deduce that 
$$h_1\phi_\alpha=\omega^\alpha\phi_\alpha, \ \ 
h_1\psi_\alpha=\omega^{-\alpha}\phi_\alpha, \ \ h_1\phi^\prime_\alpha=\omega^\alpha\phi^\prime_\alpha.$$
Thus, we have the $\Z/n\Z$-weights $wt(\phi_\alpha)=wt(\phi'_\alpha)=-wt(\psi_\alpha)=\alpha$, and $\tr$ kills expressions
of nonzero weight. Thus, using the above computation of $P_+$ we can write for $i\neq 0$,
\begin{align*}
&\tr\bigl(P_+[\psi_t(\phi_i-t_i)]\cdot\psi_t(\phi_j-t_j)\bigr)\\
&=\sum_{\alpha\neq i}t_\alpha F(\alpha,i-\alpha)\tr(\phi_{i-\alpha}(\phi_j-t_j)\psi_t)-\tr(\phi^\prime_i(\phi_j-t_j)\psi_t)\\
&=
\sum_{\alpha\neq i}t_\alpha t_{i+j-\alpha} F(\alpha,i-\alpha)\tr(\phi_{i-\alpha}\phi_j\psi_{i+j-\alpha})
-t_j\sum_{\alpha\neq i}t_\alpha t_{i-\alpha} F(\alpha,i-\alpha)\\
&-t_{i+j}\tr(\phi^\prime_i\phi_j\psi_{i+j}),
\end{align*}
where we used the identities $\tr(\phi_{i-\alpha}\psi_{i-\alpha})=1$ and $\tr(\phi^\prime_i\psi_i)=0$.
Plugging this into \eqref{ti-tj-Pplus-bracket} we can rewrite our Poisson bracket as
\begin{align*}
&\{t_i,t_j\}=\\
&\sum_{\alpha\neq j}t_\alpha t_{i+j-\alpha} F(\alpha,j-\alpha)\tr(\phi_{j-\alpha}\phi_i\psi_{i+j-\alpha})
-\sum_{\alpha\neq i}t_\alpha t_{i+j-\alpha} F(\alpha,i-\alpha)\tr(\phi_{i-\alpha}\phi_j\psi_{i+j-\alpha})\\
&-t_i\sum_{\alpha\neq j}t_\alpha t_{j-\alpha} F(\alpha,j-\alpha)+
t_j\sum_{\alpha\neq i}t_\alpha t_{i-\alpha} F(\alpha,i-\alpha)\\
&+t_{i+j}[-\tr(\phi^\prime_j\phi_i\psi_{i+j})+\tr(\phi^\prime_i\phi_j\psi_{i+j})].
\end{align*}
Changing the summation variable in the first sum by $\alpha=j-r$ and in the second sum by $\alpha=i+r$, 
we can rewrite this as
\begin{align}\label{ti-tj-F-tr-eq}
&\{t_i,t_j\}= \nonumber\\
&\sum_{r\neq 0}t_{j-r}t_{i+r} F(j-r,r)\tr(\phi_r\phi_i\psi_{i+r})
-\sum_{r\neq 0}t_{i+r} t_{j-r} F(i+r,-r)\tr(\phi_{-r}\phi_j\psi_{j-r}) \nonumber\\
&-t_i\sum_{r\neq j}t_r t_{j-r} F(r,j-r)+
t_j\sum_{r\neq i}t_r t_{i-r} F(r,i-r) \nonumber\\
&+t_{i+j}[-\tr(\phi^\prime_j\phi_i\psi_{i+j})+\tr(\phi^\prime_i\phi_j\psi_{i+j})].
\end{align}

The next important observation is that since our bracket on the moduli space $N_\xi$ is given by the natural construction,
it is preserved by the action of the Mumford group of the line bundle $\xi$, which acts on the elliptic curve and on $\xi$,
hence on $N_\xi=\PP\Ext^1(\xi,\cO)$.
It follows that our bracket on the projective space can be lifted to an $H_n$-invariant quadratic Poisson bracket on
the affine space $\Ext^1(\xi,\cO)$. Indeed, it is well-known that the projection from the space of quadratic Poisson brackets on
the affine space to the space of Poisson brackets on the projective space is surjective (see \cite{Bondal}, \cite[Sec.\ 12]{Pol97}).
Since this projection is linear, the induced map between the subspaces of $H_n$-invariants is still surjective.
Now we use the following simple general statement.

\begin{lemma} Let $\{\cdot,\cdot\}$ be a quadratic Poisson bracket on the affine space with coordinates $(x_i)$, $i\in\Z/n\Z$,
which is $H_n$-invariant. Then there exists a unique set of constants $C(\alpha,\beta)$, $\alpha,\beta\in\ZZ/n\ZZ$, such that
$$\{x_i,x_j\}=\sum_{r\in\Z/n\Z} C(r,j-i-r)x_{i+r}x_{j-r},$$
$$C(\beta,\alpha)=C(\alpha,\beta)=-C(-\alpha,-\beta).$$
The corresponding Poisson bracket on the projective space is given by
\begin{align}\label{H-inv-proj-Pois-eq}
&\{t_i,t_j\}=\sum_{r\neq 0,j-i}C(r,j-i-r)t_{i+r}t_{j-r} \nonumber\\
&-t_i\sum_{r\neq 0,j}C(r,j-r)t_rt_{j-r}-t_j\sum_{r\neq 0,-i}C(r,-i-r)t_{i+r}t_{-r} \nonumber\\
&+2[C(0,j-i)-C(0,j)-C(0,-i)]t_it_j,
\end{align}
where $t_i=x_i/x_0$ are functions on the open affine subset $x_0\neq 0$.
Here we grouped the terms in such a way that for $i\neq j$ the sets of monomials in different groups do not intersect.
\end{lemma}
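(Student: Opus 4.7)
The plan is to first pin down the canonical form from $H_n$-invariance and then, in a second step, compute the image on projective coordinates.

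First, note that by the Heisenberg relations we may assume $h_1 \cdot x_i = \omega^i x_i$ and $h_2\cdot x_i = x_{i+1}$ (both of weight $1$ for $\epsilon$). Since $\{x_i,x_j\}$ is a quadratic polynomial on which $h_1$ acts with weight $i+j$, $h_1$-invariance of the bracket forces every monomial $x_a x_b$ appearing in $\{x_i,x_j\}$ to satisfy $a+b\equiv i+j\pmod n$, so we can write
\[
\{x_i,x_j\}=\sum_{r\in\Z/n\Z}c_r(i,j)\,x_{i+r}x_{j-r}.
\]
Applying $h_2$-invariance $\{x_{i+1},x_{j+1}\}=h_2\{x_i,x_j\}$ shows $c_r(i+1,j+1)=c_r(i,j)$, so $c_r(i,j)$ depends only on $r$ and $j-i$, and can be written as $C(r,j-i-r)$ for a function $C\colon \Z/n\Z\times\Z/n\Z\to\CC$. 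For fixed monomial $x_ax_b$ with $a+b=i+j$, the values $r=a-i$ and $r=b-i=j-i-(a-i)$ contribute the coefficients $C(a-i,j-a)$ and $C(j-a,a-i)$; the convention $C(\alpha,\beta)=C(\beta,\alpha)$ makes the expression unique. Finally, applying skew-symmetry $\{x_i,x_j\}=-\{x_j,x_i\}$, substituting $r\mapsto -r$ in the resulting sum, and equating coefficients gives $C(\alpha,\beta)=-C(-\alpha,-\beta)$, as desired.

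Second, I would compute the induced bracket on the affine chart $x_0\neq 0$. A straightforward application of the Leibniz rule yields
\[
\{t_i,t_j\}=x_0^{-2}\{x_i,x_j\}-t_j\cdot x_0^{-2}\{x_i,x_0\}+t_i\cdot x_0^{-2}\{x_j,x_0\}.
\]
Substituting the canonical form for each of these three brackets produces three sums of degree-two monomials in the $t_k$. For the third sum, a change of variable $r\mapsto -r$ together with $C(\alpha,\beta)=-C(-\alpha,-\beta)$ converts $x_0^{-2}\{x_j,x_0\}$ into the shape $-\sum_r C(r,j-r)t_r t_{j-r}$ that appears in the statement.

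Finally, I would isolate the coefficient of $t_it_j$. In the sum coming from $\{x_i,x_j\}$, the values $r=0$ and $r=j-i$ both produce $t_it_j$ and together contribute $2C(0,j-i)$ by the symmetry of $C$; in the sum coming from $\{x_i,x_0\}$ (multiplied by $-t_j$), the values $r=0$ and $r=-i$ contribute $-2C(0,-i)\,t_it_j$; similarly the third piece contributes $-2C(0,j)\,t_it_j$. Removing these indices from the sums and collecting the diagonal coefficient yields \eqref{H-inv-proj-Pois-eq}. The only potential obstacle is the bookkeeping when one passes from $\{x_j,x_0\}$ to a sum indexed so that the $t_it_j$-contributions are identified cleanly; but this is purely a matter of carefully applying $C(\alpha,\beta)=-C(-\alpha,-\beta)$ and $C(\alpha,\beta)=C(\beta,\alpha)$ under one reindexing, with no further input.
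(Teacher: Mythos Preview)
Your proposal is correct and follows essentially the same approach as the paper: first use $h_1$-invariance to get the weight condition $a+b\equiv i+j$, then $h_2$-invariance to reduce to a function of $r$ and $j-i$, impose the symmetry $C(\alpha,\beta)=C(\beta,\alpha)$ for uniqueness and deduce $C(\alpha,\beta)=-C(-\alpha,-\beta)$ from skew-symmetry; finally compute the projective bracket via the Leibniz rule and extract the $t_it_j$-coefficient exactly as you describe. The paper's proof is the same in outline and detail, only ordering skew-symmetry before $h_2$-invariance and writing the Leibniz formula with $\{x_0,x_j\}$ rather than $\{x_j,x_0\}$, which is of course equivalent.
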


\begin{proof}
The invariance with respect to $h_1$ means that
$$\{x_i,x_j\}=\sum_{r\in\Z/n\Z} C_r(i,j) x_{i+r}x_{j-r},$$
for some uniquely determined constants $C_r(i,j)$ such that $C_r(i,j)=C_{j-i-r}(i,j)$.
Furthermore, the skew-symmetry is equivalent to the identity $C_r(i,j)=-C_{-r}(j,i)$. 
Now the invariance with respect to $h_2$ gives $C_r(i,j)=C_r(i+1,j+1)$, i.e., $C_r(i,j)$ depends only on the difference $j-i$.
Thus, we can write $C_r(i,j)=C(r,j-i-r)$, which gives the first assertion.
The second assertion is obtained directly from the formula
$$\{t_i,t_j\}=\{\frac{x_i}{x_0},\frac{x_j}{x_0}\}=\frac{\{x_i,x_j\}}{x_0^2}-t_i\cdot \frac{\{x_0,x_j\}}{x_0^2}-
t_j\cdot\frac{\{x_i,x_0\}}{x_0^2}.$$
\end{proof}

For example, for the Sklyanin bracket \eqref{FObracket} with $k=1$ we have
$$C(\alpha,\beta)=F(\alpha,\beta),$$
where $F$ is defined by \eqref{F-def-eq}.

Let us denote by $C(\alpha,\beta)_M$ the constants corresponding to some $H_n$-invariant lifting of our Poisson bracket
on $N_\xi$. Then by looking at the coefficient of $t_it_rt_{j-r}$, where $r\neq 0,j$ in \eqref{ti-tj-F-tr-eq} we immediately see that 
\begin{equation}\label{C-M-F-eq}
C(\alpha,\beta)_M=F(\alpha,\beta) \ \text{ for } \alpha\neq 0,\beta\neq 0, \alpha+\beta\neq 0.
\end{equation}
Similarly, for $i\neq j$, looking at the coefficient of $t_it_j$ in \eqref{ti-tj-F-tr-eq} we get
\begin{align}\label{C-F-tr-eq}
&2[C(0,j-i)_M-C(0,j)_M-C(0,-i)_M] \nonumber\\
&=F(i,j-i)\tr(\phi_{j-i}\phi_i\psi_j)-F(j,i-j)\tr(\phi_{i-j}\phi_j\psi_i)-F(0,j)+F(0,i).
\end{align}
Now using the Laurent expansion \eqref{phi-i-L-exp}, we easily find
\begin{align*}
&\tr(\phi_{j-i}\phi_i\psi_j)=
\frac{\theta'_0(0)}{\theta_j(0)}\times\\
&\left(\frac{\theta_{j-i}(0)}{\theta'_0(0)}\cdot\left[\frac{\theta'_i(0)}{\theta'_0(0)}-\pi i n\cdot\frac{\theta_i(0)}{\theta'_0(0)}\right]+
\frac{\theta_{i}(0)}{\theta'_0(0)}\cdot\left[\frac{\theta'_{j-i}(0)}{\theta'_0(0)}-\pi i n\cdot\frac{\theta_{j-i}(0)}{\theta'_0(0)}\right]\right)\\
&=\frac{1}{\theta'_0(0)\theta_j(0)}\cdot \left(\theta_{j-i}(0)\theta'_i(0)+
\theta_{i}(0)\theta'_{j-i}(0)-2\pi \ii n \theta_{j-i}(0)\theta_{i}(0)\right).
\end{align*}
Hence,
$$F(i,j-i)\cdot \tr(\phi_{j-i}\phi_i\psi_j)=\frac{\theta'_i(0)}{\theta_i(0)}+
\frac{\theta'_{j-i}(0)}{\theta_{j-i}(0)}-2\pi \ii n.$$
Thus, recalling the definition $F(0,i)$, we can rewrite \eqref{C-F-tr-eq} as
$$2(C(0,j-i)_M-C(0,j)_M-C(0,-i)_M)=2\frac{\theta'_i(0)}{\theta_i(0)}-2\frac{\theta'_j(0)}{\theta_j(0)}+
\frac{\theta'_{j-i}(0)}{\theta_{j-i}(0)}-\frac{\theta'_{i-j}(0)}{\theta_{i-j}(0)}.$$
Finally, using property (3) of theta-functions, we get for $\alpha\neq 0$,
$$\frac{\theta'_{-\alpha}(0)}{\theta_{-\alpha}(0)}=2\pi \ii n - \frac{\theta'_\alpha(0)}{\theta_\alpha(0)}.$$
Hence, we can rewrite the above formula as
\begin{align*}
&C(0,j-i)_M-C(0,j)_M-C(0,-i)_M=\frac{\theta'_{j-i}(0)}{\theta_{j-i}(0)}
-\frac{\theta'_{-i}(0)}{\theta_{-i}(0)}-\frac{\theta'_j(0)}{\theta_j(0)}+\pi \ii n\\
&=F(0,j-i)-F(0,j)-F(0,-i).
\end{align*}
Combining this with \eqref{C-M-F-eq}, we see using \eqref{H-inv-proj-Pois-eq} that
our Poisson bracket on the projective space coincides with the one induced by the Sklyanin bracket,
which finishes the proof.

\begin{remark}
It is believed that if $\xi$ is a stable vector bundle of rank $k$ and degree $n$ then the Poisson structure on the projective
space $N_\xi$ coincides with the one obtained from
the quadratic Poisson bracket $q_{n,k}$. However, the computation is much more complicated. We leave it for the future work.
\end{remark}

\begin{remark}
Using Corollary \ref{leaves}, we can classify the symplectic leaves of $N_\xi$ completely. Such a classification was first claimed in a seminal paper of Feigin and Odesskii (Theorem 1 \cite{FO95}). However, they only claimed certain set theoretical bijection and the argument seems to be incomplete. We will give a proof of this classification in a forthcoming paper \cite{HLP}.
\end{remark}

\end{document}